\newtheorem{theorem}{Theorem} %
\newtheorem{lemma}{Lemma}
\newtheorem{proposition}{Proposition}
\newenvironment{proof}{{\it Proof:\enspace}}{\hfill $\blacksquare$\par}
\newcommand{\divi}{\operatorname{div}}
\begin{document}

%
\begin{frontmatter}

\title{A minimum problem associated with scalar Ginzburg-Landau equation and free boundary} 

\author{Yuwei Hu$^{1}$}\ead{1751839970@my.swjtu.edu.cn},
\author{Jun Zheng$^{1,2}$}\ead{zhengjun2014@aliyun.com},
\author{Leandro S. Tavares$^{3}$}\ead{leandrolstav@gmail.com}

\address{$^{1}${School of Mathematics}, Southwest Jiaotong University, Chengdu, China\\
                $^{2}$Department of Electrical Engineering, Polytechnique
                Montr\'{e}al,  
                Montreal,  Canada\\
                $^{3}$Center for Mathematics, Computing and Cognition,  Federal University of {ABC},  Santo Andr\'{e}, Brazil}  

\begin{abstract}                           
 Let $N>2$, $p\in \left(\frac{2N}{N+2},+\infty\right)$, and $\Omega$ be an open bounded domain in $\mathbb{R}^N$.
      We consider the minimum problem
       $$
       \mathcal{J} (u) := \displaystyle\int_{\Omega }  \left(\frac{1}{p}| \nabla u| ^p+\lambda_1\left(1-(u^+)^2\right)^2+\lambda_2u^+\right)\text{d}x\rightarrow \text{min}
     $$
      over a certain class $\mathcal{K}$, where    $\lambda_1\geq 0$   and $ \lambda_2\in \mathbb{R}$ are constants, and  $u^+:=\max\{u,0\}$.
      The corresponding Euler-Lagrange equation is related to the  Ginzburg-Landau equation and   involves a  subcritical exponent when $\lambda_1>0$.
  For $\lambda_1\geq 0$   and $ \lambda_2\in \mathbb{R}$, we  prove     the existence, non-negativity, and  uniform boundedness of minimizers of $\mathcal{J} (u) $. Then, we show that any minimizer is locally $C^{1,\alpha}$-continuous with some $\alpha\in (0,1)$  and admits the optimal growth $\frac{p}{p-1}$ near the free boundary.  Finally,  under the additional assumption that $\lambda_2>0$, we establish     non-degeneracy for  minimizers near the free boundary  and show that there exists  at least one  minimizer for which  the corresponding free boundary has  finite ($N-1$)-dimensional Hausdorff  measure.
\end{abstract}

\begin{keyword}                            
Minimum problem,  Ginzburg-Landau equation, minimizer, regularity, free boundary, Hausdorff measure
\end{keyword}                              
\end{frontmatter}
\endNoHyper


 \section{Introduction}

Let $\Omega $ be an open bounded domain in $\mathbb{R} ^N(N>2)$ and $p\in \left(\frac{2N}{N+2},+\infty\right)$. Let {$\lambda_1\geq 0$ and $\lambda_2\in \mathbb{R}$} be  constants.  Given    $g\in W^{1,p}(\Omega)\cap L^\infty(\Omega)$ {with $g\geq 0$ on $\partial\Omega$, we} consider the following minimum problem
\begin{equation}\label{ju}
\mathcal{J} (u){=}\int_{\Omega }  \left(\frac{1}{p}| \nabla u| ^p+\lambda_1\left(1-(u^+)^2\right)^2+\lambda_2u^+\right)\text{d}x\rightarrow \text{min}
\end{equation}
  over the set {$\mathcal{K} :=\left\{u\in W^{1,p}(\Omega );\  u-g\in W_0^{1,p}(\Omega ) \right\}$.

The minimum problem of the type of \eqref{ju} is known as the free boundary problem with  free boundary $\Gamma ^+:=\left(\partial\{x\in \Omega;u(x)> 0\}\right)\cap\Omega $, and has a wide range of applications in various fields. For instance, the minimum problem  of \eqref{ju}
with $\lambda_1=0$ refers to the classical obstacle problem, which  can be used to describe the problems of equilibrium of elastic membranes, fluid filtration in porous {media, and} control of temperature (see \cite{federer1969geometric}), while the minimum problem   \eqref{ju} with $\lambda_1\neq 0$ corresponds to the non-homogeneous scalar Ginzburg-Landau equation,
which arises in the problem of low-temperature superconductivity and has many applications in physics (see \cite{bethuel1994ginzburg,tinkham2004introduction}).  {Taking a mathematical perspective into consideration, it has been pointed out in the work by Ma \cite{li2004obstacle} that the problem represented by equation~\eqref{ju} has a significant connection with the theoretical framework of minimal surface theory.}

In the past few decades, great efforts have been devoted to investigating the existence and regularities of minimizers of $\mathcal{J}(u)$ (or solutions to the corresponding Euler-Lagrange equations) when $\lambda_1=0$ and $\lambda_2>0$,  see for instance \cite{choe1991obstacle,deQueiroz2017,leitao2015regularity,zheng2022free}. {For example in \cite{choe1991obstacle}, the authors considered an obstacle problem for a quasilinear elliptic equation of $p$-Laplacian type. It was obtained that, under certain smoothness conditions on the obstacle function, the solution to the corresponding obstacle problem has interior H\"{o}lder continuous derivatives. In \cite{leitao2015regularity}, a  complete description of the  regularity was provided for a family of heterogeneous, two-phase variational free boundary problems, which are driven  by nonlinear, degenerate elliptic operators. Such study considered a  wide class of problems including, for example, heterogeneous jets and cavities problems of Prandtl-Batchelor type, singular degenerate elliptic equations, and obstacle type systems.  Some of the regularity results obtained in  \cite{leitao2015regularity} were extended to   almost minimizers in \cite{deQueiroz2017} for functionals governed by $A$-Laplacian with  H\"{o}lder continuous coefficients.
In the reference \cite{zheng2022free}, a free boundary  problem  was studied in the Orlicz-Sobolev spaces setting, where the operator  satisfies  structural conditions of Tolksdorf's type and the nonlinearities   may exhibit subcritical growth. Further  geometric properties of the free boundary can be found in  {\cite{edquist2009two,leitao2018free,zheng2018remark}}.}
In particular, it was proved in \cite{caffarelli1998obstacle,lee2003hausdorff}  that the free boundary has finite {($N-1$)}-dimensional Hausdorff measure when $\lambda_1=0$, $\lambda_2>0$, and $p\in [2,+\infty)$. This result was {included} in \cite{challal2012obstacle}  under the framework of Orlicz-Sobolev spaces.
Nevertheless, property of the free boundary in the minimum problem of the type of \eqref{ju} with $\lambda_1\neq 0$  is less studied, except the work presented in \cite{li2004obstacle}, where finite {($N-1$)}-dimensional Hausdorff measure of the free boundary was proved in the non-zero obstacle problem for the Ginzburg-Landau equation when  $\lambda_1>0$, $\lambda_2>0$, and  $p=2$.

In this paper, we  study the minimum problem  \eqref{ju} with  $\lambda_1\geq 0$, $ \lambda_2\in \mathbb{R}$, and  $p$ in a general case.
It is worth noting that, compared with the work in the existing literature, both
the higher order of $u^+$ and nonlinearity of the $p$-Laplacian bring  more intricacies when proving the existence  and regularity of minimizers, as well as Hausdorff measure for the free boundary. This puts forward a constraint on $p$, namely, only the case of $p\in \left(\frac{2N}{N+2},+\infty\right)$ is considered in this paper. Under this condition on $p$, we prove   existence, non-negativity,  and regularity of  minimizers for the functional  $\mathcal{J}(u)$ with $\lambda_1\geq 0$   and $ \lambda_2\in \mathbb{R}$. In addition, we establish  non-degeneracy of   minimizers near the free boundary  and show that there exists {at least one non-negative  minimizer, for which  the corresponding free boundary has  finite {($N-1$)}{-dimensional Hausdorff} measure when   $\lambda _2>0$.}

The rest of this paper is organized as follows.  Notions and some technical lemmas used in this paper are presented in    Section~\ref{sec7}.
 The existence, non-negativity, and uniform  $L^\infty$-boundedness  for the  minimizers of the functional $\mathcal{J}(u)$ are proved in  Section~\ref{sec2}. Local $C^{1,\alpha}$-continuity for the   minimizers and optimal growth  of  minimizers near the free boundary are established in  Section~\ref{sec2'} and Section~\ref{sec3}, respectively.   Under the further assumption that $\lambda_2>0$,    non-degeneracy of  minimizers near the free boundary is proved in Section~\ref{sec3'}, followed which,  a corresponding penalized problem  is studied in   Section~\ref{ssec1}. Finally, for $\lambda_2>0$, we show  in Section~\ref{sec5} that there exists {at least one}    minimizer with finite ($N-1$)-dimensional Hausdorff measure for the corresponding free boundary.
\section{Preliminaries}\label{sec7}
Throughout this paper,  without special statements, we always assume that
 \begin{align*}
 \lambda_1\geq 0,\quad \lambda_2\in \mathbb{R},\quad p\in \left(\frac{2N}{N+2},+\infty\right).
 \end{align*}
Let $\mathbb{N}:=\{0,1,2,3,...\}$ be the set of non-negative integers, $\mathbb{N} ^+:=\{1,2,3,...\}$ be the set of positive integers. Let $p^\star:=\frac{Np}{N-p}$. Denote by $B_r(x)$ a ball in $\Omega $ with centre $x\in\Omega $ and radius $r>0$, and {denote a ball in $\Omega$ by $B_r$ or $B_R$ without special statements on their radius and centres.}  Meanwhile, $\overline{B}_r(x)$ is taken to be the closure of $B_r(x)$.
For a minimizer $u$ of the functional $\mathcal{J} (u)$, let $ \left\{u>0\}:=\{x\in\Omega ;u(x)>0\right\}$ and ${\Gamma ^+=}\left(\partial\{x\in \Omega;u(x)>0\}\right)\cap\Omega $, which is the so-called free boundary.
 For a measurable set $E\subset \mathbb{R} ^N $, $|E|:=\mathcal{L} ^{N}(E)$ denotes the $N$-dimensional Lebesgue measure of $E$, and $\mathcal{H} ^{N}(E)$ denotes the $N$-dimensional Hausdorff measure of $E$. Let $(u)_r:=\frac{1}{|  B_r|}\int_{B_r}  u\text{d}x $ be the average value of the measurable function $u$ on the ball $B_r$ {with radius $r>0$}.

{The result below will be useful in obtaining appropriate $L^{\infty}$ estimates.}
\begin{lemma}\label{jnn}\cite{ladyzhenskaya1968linear}
	Let $\{g_n\}_{n\in \mathbb{N}}$ be a sequence of non-negative numbers satisfying
\begin{align*}
	g_{n+1} \leq CD^{n}g^{1+\zeta}_{n},\ n\in \mathbb{N},
\end{align*}
	where  $C,\zeta>0$ and $ D>1$ are constants not depending on $n$.
	If
\begin{align*}
     g_0 \leq C^{- \frac{1}{\zeta}} D^{-\frac{1}{\zeta^2}},
\end{align*}
	then $g_n \rightarrow 0$ as $n \rightarrow +\infty$.
	\end{lemma}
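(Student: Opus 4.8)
The plan is to convert the one-step recursion into an explicit geometric decay bound by induction, and then pass to the limit. Concretely, I would prove that
\[
  g_n \le g_0\, D^{-n/\zeta}\qquad\text{for every } n\in\mathbb{N},
\]
from which the conclusion is immediate: since $D>1$ and $\zeta>0$, we have $D^{-n/\zeta}\to 0$, hence $g_n\to 0$ as $n\to+\infty$.

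The induction would run as follows. The case $n=0$ holds with equality. Assuming the bound for some $n$, I would insert it into the hypothesis $g_{n+1}\le CD^n g_n^{1+\zeta}$, using that $t\mapsto t^{1+\zeta}$ is nondecreasing on $[0,+\infty)$, to obtain
\[
  g_{n+1}\le CD^n\bigl(g_0 D^{-n/\zeta}\bigr)^{1+\zeta}
         = \bigl(C g_0^{\zeta}\bigr)\, g_0\, D^{\,n-\frac{n(1+\zeta)}{\zeta}}
         = \bigl(C g_0^{\zeta}\bigr)\, g_0\, D^{-n/\zeta}.
\]
To close the induction it then remains only to check that $C g_0^{\zeta}\le D^{-1/\zeta}$, which is exactly the smallness assumption $g_0\le C^{-1/\zeta}D^{-1/\zeta^2}$ rewritten (raise both sides to the power $\zeta>0$ and multiply by $C$). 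This yields $g_{n+1}\le g_0 D^{-(n+1)/\zeta}$, completing the step.

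I do not expect any genuine obstacle here; the only point requiring care is the exponent bookkeeping, namely verifying $n-\frac{n(1+\zeta)}{\zeta}=-\frac{n}{\zeta}$ so that the power of $D$ improves by precisely the factor $D^{-1/\zeta}$ at each step, and confirming that the stated threshold on $g_0$ is exactly the value that makes the induction self-sustaining. As a variant, running the same argument with the guess $g_n\le C^{-1/\zeta}D^{-1/\zeta^2}D^{-n/\zeta}$ gives a marginally sharper estimate, but the version above is all that is needed for the $L^\infty$-estimates where this lemma will be applied.
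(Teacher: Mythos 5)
Your proof is correct: the induction $g_n \le g_0 D^{-n/\zeta}$ closes exactly under the stated smallness condition $C g_0^{\zeta} \le D^{-1/\zeta}$, and the conclusion follows since $D>1$. The paper itself cites this lemma from Ladyzhenskaya--Ural'tseva without proof, and your argument is precisely the standard fast-convergence iteration proof of that classical result, so there is nothing to add.
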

{In what follows we present a lemma that will be often applied in this manuscript. Such a result can be found in \cite{tav2016thesis}, see also \cite[Lemma A.3]{zheng2022free}.}
    \begin{lemma}\label{phir}
		Let $\tau $ be a non-negative function on an interval $(0,R_0]$ with $R_0 \leq 1$. Suppose that for all $0<r\leq R\leq R_{0}$
		\begin{align*}
      \tau  (r)\leq A\left(\frac{r}{R}\right)^{\alpha}\tau  (R)+BR^{\beta},
		\end{align*}
		where $A>1$ is a constant, $B$, {$\alpha$, and} $\beta$ are constants with {$\beta\in(0,\alpha)$}.
		Fix $\gamma  \in (\beta,\alpha)$ and consider  $\theta \in (0,1)$ with $A\theta^{\alpha} = \theta^{\gamma }$.
		Suppose that there exists a constant $m>0$ such that $\tau  (r)\leq  m  \tau  \left(\theta^{k}R\right)$ for all non-negative integer $k$ and $r \in\left[\theta^{k+1} R,  \theta^{k} R\right]$.
		Then for any $\gamma \in(0,\beta)$, there exists a positive constant $C$ depending on $\tau $, {$\theta$, and} $m$ such that
		\begin{align*}
      \tau  (r) \leq  C\left(\frac{r}{R}\right)^{\gamma }\left(\tau  (R)+BR^{\gamma }\right),
		\end{align*}
		for all $0< r \leq R \leq R_0$, and there is a positive constant $D>0$, which depends on $\tau  $, $C$, $ R_0$, {$\theta$, and} $B$ such that
		\begin{align*}
      \tau  (r) \leq D{r^{\gamma }},
		\end{align*}
		for all ${r\in(0,R_0]}$.
		\end{lemma}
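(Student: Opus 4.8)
The plan is to run the standard Campanato-type iteration along the geometric sequence of radii $\theta^{k}R$, $k\in\mathbb{N}$, and then to use the hypothesis involving the constant $m$ to transfer the resulting decay from those radii to arbitrary $r$. Note first that the prescribed $\theta$ exists and is unique: since $A>1$ and $\alpha>\gamma$, the only solution of $A\theta^{\alpha}=\theta^{\gamma}$ is $\theta=A^{-1/(\alpha-\gamma)}\in(0,1)$. To keep the two roles of the symbol $\gamma$ apart I will write $\gamma$ for the exponent in $(\beta,\alpha)$ that fixes $\theta$, and $\gamma'$ for the exponent in $(0,\beta)$ appearing in the conclusion.

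First, I would fix $R\in(0,R_0]$ and apply the assumed inequality to the pair $(\theta^{k+1}R,\theta^{k}R)$; using $A\theta^{\alpha}=\theta^{\gamma}$ this reads
\begin{align*}
\tau(\theta^{k+1}R)\le\theta^{\gamma}\,\tau(\theta^{k}R)+B\,(\theta^{k}R)^{\beta}=\theta^{\gamma}\,\tau(\theta^{k}R)+B\,R^{\beta}\,\theta^{k\beta},\qquad k\in\mathbb{N}.
\end{align*}
Setting $a_k=\tau(\theta^{k}R)$, $b=\theta^{\gamma}\in(0,1)$ and $c_k=B R^{\beta}\theta^{k\beta}$, the recursion $a_{k+1}\le b\,a_k+c_k$ unwinds to $a_k\le b^{k}a_0+\sum_{j=0}^{k-1}b^{k-1-j}c_j$; reindexing the sum and using $\gamma>\beta$ (so that $\theta^{\gamma-\beta}\in(0,1)$ and the geometric series $\sum_{i\ge0}\theta^{i(\gamma-\beta)}$ converges) one gets
\begin{align*}
\tau(\theta^{k}R)\le\theta^{k\gamma}\,\tau(R)+C_1\,B\,(\theta^{k}R)^{\beta},\qquad C_1:=\frac{\theta^{-\beta}}{1-\theta^{\gamma-\beta}}.
\end{align*}
Since $R\le R_0\le1$ and $\gamma'<\beta<\gamma$, the elementary inequalities $\theta^{k\gamma}\le\theta^{k\gamma'}$, $R^{\beta}\le R^{\gamma'}$ and $(\theta^{k}R)^{\beta}\le\theta^{k\gamma'}R^{\gamma'}$ turn this into $\tau(\theta^{k}R)\le\theta^{k\gamma'}\bigl(\tau(R)+C_1 B R^{\gamma'}\bigr)$.

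To finish, for an arbitrary $0<r\le R$ I would pick the unique $k\in\mathbb{N}$ with $r\in[\theta^{k+1}R,\theta^{k}R]$; the hypothesis on $m$ gives $\tau(r)\le m\,\tau(\theta^{k}R)$, and $\theta^{k+1}R\le r$ gives $\theta^{k\gamma'}\le\theta^{-\gamma'}(r/R)^{\gamma'}$, whence
\begin{align*}
\tau(r)\le m\,\theta^{-\gamma'}\max\{1,C_1\}\Bigl(\frac{r}{R}\Bigr)^{\gamma'}\bigl(\tau(R)+B R^{\gamma'}\bigr).
\end{align*}
This is the first estimate, with $C:=m\,\theta^{-\gamma'}\max\{1,C_1\}$ depending only on $m$, $\theta$ and the fixed exponents (hence, in the notation of the statement, on $\tau,\theta,m$); taking $R=R_0$ then yields $\tau(r)\le D r^{\gamma'}$ for all $r\in(0,R_0]$ with $D:=C R_0^{-\gamma'}\bigl(\tau(R_0)+B R_0^{\gamma'}\bigr)$, which depends on $C,R_0,\tau,B$ (and $\theta$), as claimed.

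I do not anticipate a genuine obstacle: this is essentially a textbook iteration. The only points requiring care are the bookkeeping of the exponents in the geometric sum — it is precisely $\beta<\gamma$ that makes $C_1$ finite — the use of $R\le R_0\le1$ and $\theta\in(0,1)$ for the monotonicity-in-the-exponent inequalities, the role of the $m$-hypothesis as a substitute for monotonicity of $\tau$ (for a nondecreasing $\tau$ one could simply take $m=1$), and the verification that the final constants depend only on the listed quantities. One should also keep in mind that the inequality is meaningful only when $B\ge0$, which is tacitly used when replacing $R^{\beta}$ by $R^{\gamma'}$ in the term involving $C_1B$.
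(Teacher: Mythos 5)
Your proof is correct. The paper gives no proof of this lemma (it is imported from \cite{tav2016thesis} and \cite[Lemma A.3]{zheng2022free}), and your argument is exactly the standard iteration those sources use: dyadic-type iteration along $\theta^{k}R$ using $A\theta^{\alpha}=\theta^{\gamma}$, summation of the geometric series (finite precisely because $\beta<\gamma$), transfer to arbitrary $r$ via the $m$-hypothesis, and the harmless exponent downgrades permitted by $R_0\le 1$; your explicit separation of the two roles of $\gamma$ and the remark that $B\ge 0$ is tacitly needed are appropriate clarifications rather than deviations.
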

        \begin{lemma}\label{f1l}\cite[Corollary 4.24]{brezis2011functional}
            Let $f\in L^1_{\text{loc}}(\Omega )$ satisfying $\int_{\Omega }  f\varphi \text{d}x=0$ for all $\varphi \in C_0^\infty(\Omega )$.
            Then $f=0$ a.e in $\Omega $.
             \end{lemma}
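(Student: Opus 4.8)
The plan is to establish Lemma~\ref{f1l} by the classical mollification argument, the only subtlety being the localization forced by the $L^1_{\text{loc}}$ hypothesis. I would first fix an arbitrary open set $\omega$ whose closure $\overline{\omega}$ is a compact subset of $\Omega$, and record that $d:=\operatorname{dist}(\overline{\omega},\partial\Omega)>0$. Let $\rho_\varepsilon$ (for $\varepsilon>0$) denote the standard family of Friedrichs mollifiers, so that $\rho_\varepsilon\geq 0$, $\operatorname{supp}\rho_\varepsilon\subset B_\varepsilon(0)$, and $\int_{\mathbb{R}^N}\rho_\varepsilon\,\text{d}x=1$.

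The key observation is that for each $x\in\omega$ and each $\varepsilon\in(0,d)$, the translate $y\mapsto\rho_\varepsilon(x-y)$ is a legitimate test function in $C_0^\infty(\Omega)$, since its support is contained in $B_\varepsilon(x)\subset\Omega$. Plugging it into the hypothesis gives $(f*\rho_\varepsilon)(x)=\int_{\Omega}f(y)\rho_\varepsilon(x-y)\,\text{d}y=0$ for every $x\in\omega$ and every $\varepsilon\in(0,d)$. On the other hand, since $f\in L^1_{\text{loc}}(\Omega)$ we have $f\in L^1$ on a neighbourhood of $\overline{\omega}$, and therefore $f*\rho_\varepsilon\to f$ in $L^1(\omega)$ as $\varepsilon\to 0^+$ by the standard approximation property of mollifiers. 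Combining the two facts yields $f=0$ a.e.\ in $\omega$.

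To finish, I would exhaust $\Omega$ by a countable increasing sequence of open sets $\omega_k$ with $\overline{\omega_k}\subset\Omega$ compact and $\bigcup_k\omega_k=\Omega$, apply the previous step to each $\omega_k$, and take the union of the (Lebesgue-null) exceptional sets to conclude that $f=0$ a.e.\ in $\Omega$. I do not anticipate a genuine obstacle here; the one point requiring care is the localization---keeping $\varepsilon$ below $\operatorname{dist}(\overline{\omega},\partial\Omega)$ so that the mollifier remains an admissible test function in $C_0^\infty(\Omega)$, and invoking $L^1$-convergence of the mollifications only on compactly contained subsets rather than on all of $\Omega$. (Alternatively, one could argue via the Lebesgue differentiation theorem by testing against smooth approximations of characteristic functions of small balls, but the mollification route is cleaner.)
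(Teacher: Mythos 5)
Your proof is correct. The paper does not prove this lemma at all---it is quoted directly from \cite[Corollary 4.24]{brezis2011functional}---so there is no in-paper argument to compare against; your mollification argument is the standard one and is carried out with the right amount of care (keeping $\varepsilon<\operatorname{dist}(\overline{\omega},\partial\Omega)$ so that $y\mapsto\rho_\varepsilon(x-y)$ is admissible, invoking $L^1$-convergence of $f*\rho_\varepsilon$ only on compactly contained subsets, and exhausting $\Omega$ by countably many such sets). For reference, the proof in Brezis runs along a slightly different but equivalent line: instead of mollifying $f$, one mollifies a bounded compactly supported function $g$ (ultimately a measurable sign of $f$ on a compact set) to produce admissible test functions $\rho_n*g\in C_0^\infty(\Omega)$, passes to the limit by dominated convergence to get $\int f g=0$, and concludes $\int_K|f|=0$ for every compact $K\subset\Omega$; both routes rest on the same approximation mechanism and yield the result with the same generality, so your version is a perfectly acceptable substitute.
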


{The next two results  will be applied in delicate iterations and comparison arguments with appropriate competitors. Moreover, it will be used to obtain interior regularity of the gradient of the minimizers considered.}

    \begin{lemma}\label{nkappa}\cite[Lemma 3.1]{zheng2017regularity}
        Let {$u\in W^{1,p}(\Omega )$,} $B_R\subset \Omega $. If $v$ is a bounded weak solution to
        \begin{align*}
            \divi~\left({|\nabla v|^{p-2}}\nabla v\right)=0\ \text{in}\ B_R,\quad v-u\in W^{1,p}_0(B_R).
        \end{align*}
        Then for any $\mu  \in(0,N)$, there exists a {positive} constant $C$ depending only on $\mu  $, $N$, $p$, {and} $\| v\|_{L^\infty(B_R)}$ such that
        \begin{align*}
            \int_{B_R}  |\nabla u-\nabla v|^p\text{d}x \leq C\int_{B_R}  \left(|\nabla u|^p-|\nabla v|^p\right)\text{d}x +CR^\frac{\mu  }{2}\left(\int_{B_R}  \left(|\nabla u|^p-{|\nabla v|^p}\right)\text{d}x \right)^\frac{1}{2}.
        \end{align*}
     \end{lemma}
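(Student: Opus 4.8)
The plan is to combine the variational characterization of the $p$-harmonic replacement $v$ with a pointwise convexity inequality for $\xi\mapsto|\xi|^{p}$, and to handle the degenerate range $1<p<2$ by an additional H\"older step. First, since the equation $\divi(|\nabla v|^{p-2}\nabla v)=0$ in $B_{R}$ is the Euler--Lagrange equation of the strictly convex functional $w\mapsto\int_{B_{R}}|\nabla w|^{p}\,\text{d}x$ over $\{w\in W^{1,p}(B_{R}):w-u\in W_{0}^{1,p}(B_{R})\}$, the function $v$ is its unique minimizer; comparing with the competitor $u$ gives $\int_{B_{R}}|\nabla v|^{p}\,\text{d}x\le\int_{B_{R}}|\nabla u|^{p}\,\text{d}x$, so that $W:=\int_{B_{R}}\bigl(|\nabla u|^{p}-|\nabla v|^{p}\bigr)\,\text{d}x\ge 0$ and the right-hand side of the claim is meaningful. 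Moreover, testing the weak formulation with $\varphi=u-v\in W_{0}^{1,p}(B_{R})$ gives the orthogonality relation $\int_{B_{R}}|\nabla v|^{p-2}\nabla v\cdot(\nabla u-\nabla v)\,\text{d}x=0$.

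Next I would invoke the elementary inequality: for every $p>1$ there is $c_{p}>0$ with $\tfrac1p|\xi|^{p}-\tfrac1p|\eta|^{p}-|\eta|^{p-2}\eta\cdot(\xi-\eta)\ge c_{p}(|\xi|+|\eta|)^{p-2}|\xi-\eta|^{2}$ for all $\xi,\eta\in\mathbb{R}^{N}$ (with the usual convention at $\xi=\eta=0$). Choosing $\xi=\nabla u(x)$, $\eta=\nabla v(x)$, integrating over $B_{R}$, and cancelling the linear term via the orthogonality relation, one obtains
\[
W\ \ge\ p\,c_{p}\int_{B_{R}}(|\nabla u|+|\nabla v|)^{p-2}|\nabla u-\nabla v|^{2}\,\text{d}x .
\]
When $p\ge2$ this is already more than enough: since $|\nabla u-\nabla v|\le|\nabla u|+|\nabla v|$ and $p-2\ge0$, the integrand dominates $|\nabla u-\nabla v|^{p}$, so $\int_{B_{R}}|\nabla u-\nabla v|^{p}\,\text{d}x\le(p\,c_{p})^{-1}W$, i.e.\ the stronger bound with no $R^{\mu/2}$-term.

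For $1<p<2$ I would write pointwise $|\nabla u-\nabla v|^{p}=\bigl[(|\nabla u|+|\nabla v|)^{p-2}|\nabla u-\nabla v|^{2}\bigr]^{p/2}(|\nabla u|+|\nabla v|)^{p(2-p)/2}$, apply H\"older's inequality with exponents $\tfrac2p$ and $\tfrac2{2-p}$, and use the previous display to get $\int_{B_{R}}|\nabla u-\nabla v|^{p}\,\text{d}x\le C\,W^{p/2}\bigl(\int_{B_{R}}(|\nabla u|+|\nabla v|)^{p}\,\text{d}x\bigr)^{(2-p)/2}$. It then remains to control the total gradient energy: a Caccioppoli-type estimate for the bounded $p$-harmonic function $v$ bounds $\int_{B_{R}}|\nabla v|^{p}\,\text{d}x$ by a constant depending only on $N,p,\|v\|_{L^{\infty}(B_{R})}$ times a suitable power of $R$, and since $\int_{B_{R}}|\nabla u|^{p}\,\text{d}x=W+\int_{B_{R}}|\nabla v|^{p}\,\text{d}x$ this yields $\int_{B_{R}}(|\nabla u|+|\nabla v|)^{p}\,\text{d}x\le C\,R^{\mu}+C\,W$ for the prescribed $\mu\in(0,N)$ --- which is exactly where the dependence of the constant on $\|v\|_{L^{\infty}(B_{R})}$ comes from. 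Substituting, using $(R^{\mu}+W)^{(2-p)/2}\le R^{\mu(2-p)/2}+W^{(2-p)/2}$, splitting $W^{p/2}=W^{1/2}W^{(p-1)/2}$ and bounding $W^{(p-1)/2}$ by $C\,R^{\mu(p-1)/2}$, and finally absorbing the remaining $\int_{B_{R}}|\nabla u-\nabla v|^{p}$-type terms by Young's inequality, one is led to $\int_{B_{R}}|\nabla u-\nabla v|^{p}\,\text{d}x\le C\,W+C\,R^{\mu/2}W^{1/2}$, as asserted.

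The hard part is the subquadratic range $1<p<2$: the uniform convexity of $|\cdot|^{p}$ degenerates, so $\int_{B_{R}}|\nabla u-\nabla v|^{p}$ cannot be estimated directly by $W$, and the H\"older step unavoidably costs the factor $\bigl(\int_{B_{R}}(|\nabla u|+|\nabla v|)^{p}\bigr)^{(2-p)/2}$, whose control via the boundedness of $v$ is precisely what creates the lower-order term $C\,R^{\mu/2}W^{1/2}$; the rest is a careful bookkeeping of exponents in that last step. For $p\ge2$ the whole argument collapses to the clean bound $\int_{B_{R}}|\nabla u-\nabla v|^{p}\,\text{d}x\le CW$.
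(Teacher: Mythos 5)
Your setup is sound: the minimality of $v$, the orthogonality relation from testing with $u-v$, the pointwise convexity inequality, and the resulting bound $W:=\int_{B_R}(|\nabla u|^p-|\nabla v|^p)\,\mathrm{d}x\ \gtrsim\ \int_{B_R}(|\nabla u|+|\nabla v|)^{p-2}|\nabla u-\nabla v|^2\,\mathrm{d}x$ are all correct, and for $p\ge 2$ your argument is complete and even yields the stronger estimate without the $R^{\mu/2}$ term. (Note the paper does not prove this lemma; it cites it from [Lemma 3.1, zheng2017regularity], so there is no in-paper proof to compare with.)

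The genuine gap is in the subquadratic case, at the step where you claim that a ``Caccioppoli-type estimate for the bounded $p$-harmonic function $v$'' bounds $\int_{B_R}|\nabla v|^p\,\mathrm{d}x$ by $C(N,p,\|v\|_{L^\infty(B_R)})R^{\mu}$. No such estimate exists on the \emph{full} ball: Caccioppoli controls the energy only on interior balls $B_{\rho}\Subset B_R$, while up to $\partial B_R$ the energy of $v$ is comparable to that of $u$ (by minimality $\int_{B_R}|\nabla v|^p\le\int_{B_R}|\nabla u|^p$, and there is no reverse control), and a bounded $p$-harmonic function with wild $W^{1,p}$ boundary data can have arbitrarily large Dirichlet energy on $B_R$ while $\|v\|_{L^\infty}$ stays bounded (already for $p=2$: harmonic extensions of bounded, highly oscillatory boundary data). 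Consequently your key intermediate bound $\int_{B_R}(|\nabla u|+|\nabla v|)^p\,\mathrm{d}x\le CR^{\mu}+CW$ is unjustified, and so is the later step ``bounding $W^{(p-1)/2}$ by $CR^{\mu(p-1)/2}$'', which tacitly assumes $W\le CR^{\mu}$ — also unavailable for arbitrary $u\in W^{1,p}$. (The final bookkeeping could be repaired by distinguishing the regimes $W\le R^{\mu}$ and $W>R^{\mu}$, but only if the energy bound held, which it does not.) The mechanism by which $\|v\|_{L^\infty(B_R)}$ actually enters is not a global Caccioppoli inequality but the interior gradient estimate for $p$-harmonic functions, $|\nabla v(x)|\le C(N,p)\,\|v\|_{L^\infty(B_R)}/\operatorname{dist}(x,\partial B_R)$, used together with a splitting of $B_R$ into an interior region and boundary annuli (or an integration of the weight $\operatorname{dist}(x,\partial B_R)^{-(2-p)}$, which converges precisely because $2-p<1$); this is also what produces an arbitrary $\mu\in(0,N)$ and the factor $R^{\mu/2}$ multiplying $W^{1/2}$, and it is the type of argument behind the cited Lemma 3.1. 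Without some such use of interior regularity of $v$ near $\partial B_R$, your Hölder reduction leaves the uncontrolled total energy $\int_{B_R}(|\nabla u|+|\nabla v|)^p\,\mathrm{d}x$ in the estimate, and the proof does not close.
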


     \begin{lemma}\label{nsigma}{\cite[Lemma 4.1]{leitao2015regularity}}
        Let {$u\in W^{1,p}(B_R)$, $B_R\subset \Omega $. If $v\in W^{1,p}(B_R )$} is a weak solution to
        \begin{align*}
        \divi~\left({|\nabla v|^{p-2}}\nabla v\right)=0\ \ \text{in}\ \ B_R.
        \end{align*}
        Then there exist a constant $\delta \in(0,1)$ and a positive constant $C$ depending only on $N$ and $p$ such that
        \begin{align*}
            \int_{B_r}  \left|\nabla u-(\nabla u)_r\right|^p\text{d}x\leq C\left(\frac{r}{R}\right)^{N+\delta  }\int_{B_R}  |\nabla u-(\nabla u)_R|^p\text{d}x+C\int_{B_R} |\nabla u-\nabla v|^p\text{d}x
        \end{align*}
  holds true  for all $r\in(0,R]$.
    \end{lemma}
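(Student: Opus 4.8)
The plan is to deduce the estimate for the general function $u$ from a classical interior gradient–decay estimate for the $p$–harmonic comparison map $v$, using only the triangle inequality and the quasi–minimality of averages in $L^{p}$, while tracking constants so that only a dependence on $N$ and $p$ remains.

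\noindent\emph{Step 1 (the $p$–harmonic decay estimate).} I would first record that there exist $\delta\in(0,1)$ and $C=C(N,p)>0$ such that every weak solution $v$ of $\divi\left(|\nabla v|^{p-2}\nabla v\right)=0$ in $B_{R}$ satisfies
\begin{align*}
\int_{B_{r}}\left|\nabla v-(\nabla v)_{r}\right|^{p}\text{d}x\leq C\left(\frac{r}{R}\right)^{N+\delta}\int_{B_{R}}\left|\nabla v-(\nabla v)_{R}\right|^{p}\text{d}x,\qquad 0<r\leq R.
\end{align*}
For $r\in(R/2,R]$ this is trivial, since $\left|\nabla v-(\nabla v)_{r}\right|^{p}\leq 2^{p}\left|\nabla v-(\nabla v)_{R}\right|^{p}$ and $B_{r}\subset B_{R}$; for $r\leq R/2$ it is a consequence of the classical interior $C^{1,\alpha}$ regularity of $p$–harmonic functions (DiBenedetto, Tolksdorf, Uhlenbeck, Uraltseva), the constant depending only on $N,p$ once one normalises to $R=1$ using the scaling invariance $v\mapsto R^{-1}v(R\,\cdot)$ of the equation and of both sides of the estimate, together with the fact that both sides are homogeneous of degree $p$ in $\nabla v$.

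\noindent\emph{Step 2 (two elementary facts and the comparison).} Next I would use: (a) for any ball $B$ and $f\in L^{p}(B;\mathbb{R}^{N})$, Jensen's inequality gives $\left|(f)_{B}-\xi\right|^{p}\leq\frac{1}{|B|}\int_{B}|f-\xi|^{p}\text{d}x$ for every constant vector $\xi$, whence $\int_{B}\left|f-(f)_{B}\right|^{p}\text{d}x\leq 2^{p}\int_{B}|f-\xi|^{p}\text{d}x$; and (b) the inequality $|a+b|^{p}\leq 2^{p-1}\left(|a|^{p}+|b|^{p}\right)$. Applying (a) on $B_{r}$ with $f=\nabla u$ and $\xi=(\nabla v)_{r}$, then (b), yields
\begin{align*}
\int_{B_{r}}\left|\nabla u-(\nabla u)_{r}\right|^{p}\text{d}x\leq 2^{2p-1}\int_{B_{r}}\left|\nabla v-(\nabla v)_{r}\right|^{p}\text{d}x+2^{2p-1}\int_{B_{r}}\left|\nabla u-\nabla v\right|^{p}\text{d}x.
\end{align*}
To the first term on the right I would apply the decay estimate of Step 1; to the factor $\int_{B_{R}}\left|\nabla v-(\nabla v)_{R}\right|^{p}\text{d}x$ that it produces I would again use (a) on $B_{R}$ with $f=\nabla v$ and $\xi=(\nabla u)_{R}$, followed by (b), to bound it by $2^{2p-1}\int_{B_{R}}\left|\nabla u-(\nabla u)_{R}\right|^{p}\text{d}x+2^{2p-1}\int_{B_{R}}\left|\nabla u-\nabla v\right|^{p}\text{d}x$. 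Since $\int_{B_{r}}|\nabla u-\nabla v|^{p}\text{d}x\leq\int_{B_{R}}|\nabla u-\nabla v|^{p}\text{d}x$, collecting the purely dimensional constants gives the assertion with a single $C=C(N,p)$.

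The only genuinely substantive ingredient is the $p$–harmonic decay estimate of Step 1; the rest is bookkeeping. Its delicate point — and the reason one cannot avoid invoking the deep interior regularity theory for degenerate/singular quasilinear operators — is the behaviour of the equation where $\nabla v$ vanishes; some care is also needed there to verify that the constant produced depends neither on $v$ nor on the scale $R$, so that it may legitimately be absorbed into a single $C=C(N,p)$ in the final bound.
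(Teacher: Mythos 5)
The paper itself offers no proof of this lemma: it is imported verbatim as Lemma~4.1 of \cite{leitao2015regularity}, so your argument can only be measured against the standard proofs in the literature. Your Step~2 is correct and is exactly the usual bookkeeping: Jensen's inequality plus $|a+b|^p\leq 2^{p-1}(|a|^p+|b|^p)$, applied once on $B_r$ and once on $B_R$, reduce the lemma to the excess-decay estimate of Step~1 for the $p$-harmonic function $v$. Note, however, that taking $u=v$ in the lemma shows the lemma \emph{is} that excess-decay estimate, so the entire content of the statement sits in Step~1.

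And there your justification has a real gap. What the classical interior $C^{1,\alpha}$ estimates for $p$-harmonic functions give directly, after the normalisation you describe, is
\begin{align*}
\int_{B_r}\left|\nabla v-(\nabla v)_r\right|^p\text{d}x\leq C\left(\frac{r}{R}\right)^{N+\alpha p}\int_{B_R}|\nabla v|^p\text{d}x ,
\end{align*}
with the full gradient, not the excess, on the right. The two invariances you invoke --- the rescaling $v\mapsto R^{-1}v(R\,\cdot)$ and degree-$p$ homogeneity in $\nabla v$ --- only remove the dependence on $R$ and on multiplicative normalisation; they cannot convert $\int_{B_R}|\nabla v|^p\text{d}x$ into $\int_{B_R}|\nabla v-(\nabla v)_R|^p\text{d}x$, because for $p\neq 2$ subtracting the affine function $x\mapsto(\nabla v)_R\cdot x$ destroys $p$-harmonicity. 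So the delicate point is not only the degeneracy at $\{\nabla v=0\}$ and constant tracking, as you say at the end, but precisely this lack of affine invariance. The standard repair is a dichotomy: if $|(\nabla v)_R|^p$ is controlled by the excess (degenerate regime), excess and full gradient over $B_R$ are comparable and the crude estimate above suffices; otherwise $\nabla v$ remains in a region where $\divi\left(|\nabla v|^{p-2}\nabla v\right)$ is uniformly elliptic with ellipticity ratio depending only on $p$, and comparison with the linearised constant-coefficient problem yields decay of the excess itself. Alternatively, you may simply cite the known excess-decay estimates for solutions of $\divi\left(|\nabla v|^{p-2}\nabla v\right)=0$ (DiBenedetto--Manfredi; Duzaar--Mingione; Kuusi--Mingione). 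With either of these in place of your scaling argument, the proof is complete; as written, Step~1 is asserted rather than proved, and it is exactly where the lemma lives.
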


    \section{Existence, non-negativity, and uniform  boundedness  of   minimizers}\label{sec2}

{In this section, we prove the existence, non-negativity,  and   uniform boundedness  of  minimizers for the functional $\mathcal{J}(u)$.
We have the following result.
      \begin{theorem}\label{t1} The functional of $\mathcal{J} (u)$   admits at least one minimizer over the set $\mathcal{K} $.
      Moreover, every minimizer over the set $\mathcal{K}$ is non-negative {a.e.} in $\Omega$ {and belongs to $ L^\infty(\Omega )$.} Furthermore, there exists a positive constant $C$ depending only on $N$, $p$, $\lambda_1$, $|\lambda_2|$, $\| g\| _{W^{1.p}(\Omega )}$, {$\| g\| _{L^\infty(\Omega )}$, }
       and the diameter of $\Omega $ {such that}
    \begin{align*}
    \| u\| _{L^\infty(\Omega )}+\| u\| _{W^{1.p}(\Omega )}\leq C
    \end{align*}
    {holds true  for all minimizers, still denoted by $u$, of $\mathcal{J} (u)$.}
   \end{theorem}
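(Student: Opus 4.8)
The plan is to obtain existence by the direct method, non-negativity by comparing a minimizer with its positive part, the $W^{1,p}$-bound from coercivity together with minimality, and the $L^{\infty}$-bound by a De Giorgi--Stampacchia truncation scheme feeding into Lemma~\ref{jnn}. For coercivity on $\mathcal{K}$: since $\lambda_1\ge 0$ the term $\lambda_1(1-(u^+)^2)^2\ge 0$ may be discarded from below, the Poincaré inequality for $u-g\in W_0^{1,p}(\Omega)$ (valid for bounded $\Omega$, with constant controlled by $\operatorname{diam}\Omega$) gives $\|u\|_{L^1(\Omega)}\le C\big(\|\nabla u\|_{L^p(\Omega)}+\|g\|_{W^{1,p}(\Omega)}\big)$, so $\big|\int_\Omega\lambda_2 u^+\,\mathrm{d}x\big|\le|\lambda_2|\,\|u\|_{L^1}$, and Young's inequality absorbs $\|\nabla u\|_{L^p}$ into $\tfrac1p\|\nabla u\|_{L^p}^p$, yielding $\mathcal{J}(u)\ge\tfrac{1}{2p}\|\nabla u\|_{L^p(\Omega)}^p-C_0$ with $C_0=C_0(N,p,|\lambda_2|,\|g\|_{W^{1,p}},\operatorname{diam}\Omega)$. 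As $g\in W^{1,p}(\Omega)\cap L^\infty(\Omega)$ one has $\mathcal{J}(g)<\infty$, so $m:=\inf_{\mathcal{K}}\mathcal{J}$ is finite and any minimizing sequence $\{u_n\}$ is bounded in $W^{1,p}(\Omega)$. Extracting $u_n\rightharpoonup u$ in $W^{1,p}(\Omega)$ ($1<p<\infty$) and, by the compact embedding $W^{1,p}_0(\Omega)\hookrightarrow\hookrightarrow L^q(\Omega)$, $q<p^\star$, a further subsequence with $u_n\to u$ in $L^1(\Omega)$ and a.e., one gets $u\in\mathcal{K}$ (weak closedness of $W^{1,p}_0$), $\int_\Omega\frac1p|\nabla u|^p\le\liminf\int_\Omega\frac1p|\nabla u_n|^p$ (convexity), $\int_\Omega\lambda_1(1-(u^+)^2)^2\le\liminf\int_\Omega\lambda_1(1-(u_n^+)^2)^2$ (Fatou, $\lambda_1\ge0$, $u_n^+\to u^+$ a.e.), and $\int_\Omega\lambda_2 u_n^+\to\int_\Omega\lambda_2 u^+$ (since $|u_n^+-u^+|\le|u_n-u|\to0$ in $L^1$); summing, $\mathcal{J}(u)\le m$, so $u$ is a minimizer.

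Next, for non-negativity let $u$ be any minimizer. Since $\operatorname{tr}u=\operatorname{tr}g\ge 0$ on $\partial\Omega$, we have $u^-=(-u)^+\in W_0^{1,p}(\Omega)$ and thus $u^+=u+u^-\in\mathcal{K}$; as $(u^+)^+=u^+$ and $\nabla u^+=\nabla u\,\chi_{\{u>0\}}$,
\[
\mathcal{J}(u)-\mathcal{J}(u^+)=\int_{\{u<0\}}\tfrac1p|\nabla u|^p\,\mathrm{d}x\ \ge\ 0 ,
\]
which by minimality vanishes, forcing $\nabla u^-=0$ a.e.\ and hence, since $u^-\in W_0^{1,p}(\Omega)$, $u^-=0$ by Poincaré; so $u\ge 0$ a.e. The $W^{1,p}$-bound then follows from $\mathcal{J}(u)\le\mathcal{J}(g)$: since $g\in L^\infty(\Omega)$, $\mathcal{J}(g)$ is bounded by a constant depending on $N,p,\lambda_1,|\lambda_2|,\|g\|_{W^{1,p}},\|g\|_{L^\infty},|\Omega|$, and combining with the coercivity estimate and Poincaré gives $\|u\|_{W^{1,p}(\Omega)}\le C$.

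For the $L^\infty$-bound, put $k_0:=\max\{1,\|g\|_{L^\infty(\Omega)}\}$ and, for $k\ge k_0$, take the competitor $v_k:=\min\{u,k\}$ (a standard argument gives $(u-k)^+\in W_0^{1,p}(\Omega)$, so $v_k\in\mathcal{K}$). On $A_k:=\{x\in\Omega:u(x)>k\}$ we have $u>k\ge 1$, hence $(1-u^2)^2\ge(1-k^2)^2$, so the $\lambda_1$-contribution to $\mathcal{J}(v_k)-\mathcal{J}(u)$ is $\le0$, while $\lambda_2(k-u)\le|\lambda_2|(u-k)$ there; since $\nabla((u-k)^+)=\nabla u\,\chi_{A_k}$, the inequality $\mathcal{J}(u)\le\mathcal{J}(v_k)$ yields
\[
\int_\Omega\big|\nabla(u-k)^+\big|^p\,\mathrm{d}x\ \le\ p|\lambda_2|\int_{A_k}(u-k)\,\mathrm{d}x .
\]
(If $\lambda_2=0$ this already gives $(u-k)^+\equiv0$, so assume $\lambda_2\ne0$.) Applying Sobolev's inequality to $(u-k)^+\in W_0^{1,p}(\Omega)$ — into $L^{p^\star}$ if $p<N$, into a fixed $L^q$ with $q>p$ if $p\ge N$, the superfluous volume factors being controlled by $|A_k|\le|\Omega|$ — and Hölder's inequality on the right give $\|(u-k)^+\|_{L^{p^\star}}\le C\,|A_k|^{\frac{p^\star-1}{p^\star(p-1)}}$, whence for $h>k\ge k_0$, using $(u-k)^+>h-k$ on $A_h\subset A_k$,
\[
|A_h|\ \le\ \frac{C^{p^\star}}{(h-k)^{p^\star}}\,|A_k|^{\frac{p^\star-1}{p-1}},
\qquad\text{with}\quad \frac{p^\star-1}{p-1}>1 .
\]
With $k_n:=k_0+M(1-2^{-n})$ and $g_n:=|A_{k_n}|$ this reads $g_{n+1}\le\widetilde C\,D^{\,n}g_n^{1+\zeta}$, $\zeta=\tfrac{p^\star-p}{p-1}>0$, $D=2^{p^\star}>1$, $\widetilde C$ of order $M^{-p^\star}$; choosing $M=M(N,p,|\lambda_2|,\operatorname{diam}\Omega)$ large enough that $g_0\le|\Omega|$ meets the smallness hypothesis of Lemma~\ref{jnn}, we conclude $g_n\to0$, i.e.\ $|A_{k_0+M}|=0$ and $\|u\|_{L^\infty(\Omega)}\le k_0+M$. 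I expect the $L^\infty$-bound to be the main obstacle: the truncation has to be set up so that both the quartic Ginzburg--Landau term and the linear term $\lambda_2 u^+$ (of possibly adverse sign) are disposed of with the right sign — which is exactly why the iteration starts at level $k_0\ge1$ — and then one must verify that the resulting super-level-set recursion meets the hypotheses of Lemma~\ref{jnn} with a jump size $M$ depending only on the admissible data.
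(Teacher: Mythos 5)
Your proposal is correct, and while the existence, non-negativity, and $W^{1,p}$-bound steps run essentially parallel to the paper's Steps 1, 2, and 4 (direct method with coercivity from Poincar\'e and Young, comparison with $u^+$, and $\mathcal{J}(u)\le\mathcal{J}(g)$ plus coercivity), your $L^\infty$ argument takes a genuinely different and in fact simpler route. The paper expands $\left(1-(u^+)^2\right)^2=1-2(u^+)^2+(u^+)^4$ when comparing $u$ with its truncation, so the quadratic piece $2\lambda_1\int_{S_k}\left((u^+)^2-(u_k^+)^2\right)\text{d}x$ appears with the \emph{wrong} sign and must be carried through the iteration; this forces the $L^2$-type quantity $g_n=\int_{S_{K_n}}\left((|u|-K_n)^+\right)^2\text{d}x$ in \eqref{bou5}, the case split $p\in\left(\frac{2N}{N+2},N\right)$ versus $p\in[N,+\infty)$ to embed into $L^2$, a truncation level $K$ depending on $\|u\|_{W^{1,p}(\Omega)}$, and then the separate Step 4 to make that norm uniform. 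You instead observe that for levels $k\ge k_0\ge 1$ and $u\ge 0$ the map $t\mapsto(1-t^2)^2$ is nondecreasing on $[1,\infty)$, so the whole $\lambda_1$-term in $\mathcal{J}(\min\{u,k\})-\mathcal{J}(u)$ is nonpositive and can be discarded; what remains is the classical obstacle-type estimate $\int_\Omega|\nabla(u-k)^+|^p\text{d}x\le p|\lambda_2|\int_{A_k}(u-k)\text{d}x$, and the Stampacchia-type level-set recursion $g_n=|A_{k_n}|$ fed into Lemma~\ref{jnn} with a jump size $M$ chosen from the data. This buys you a bound that is immediately uniform (indeed independent of $\lambda_1$ and of $\|u\|_{W^{1,p}(\Omega)}$), at the mild cost that your $p\ge N$ case is only sketched (choosing a fixed $q>p$ with $W_0^{1,p}(\Omega)\hookrightarrow L^q(\Omega)$ does make the exponent $\frac{q-1}{p-1}>1$, so the sketch completes without difficulty); your exponent bookkeeping ($\zeta=\frac{p^\star-p}{p-1}>0$, $D=2^{p^\star}$, $\widetilde C\sim M^{-p^\star}$) checks out, and the verification of the smallness hypothesis of Lemma~\ref{jnn} by enlarging $M$ is legitimate since $g_0\le|\Omega|$ is fixed while $\widetilde C^{-1/\zeta}\to\infty$ as $M\to\infty$.
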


\begin{proof} The proof consists of four steps, which can be proceeded in a similar way as in \cite[Theorem 2.1]{zheng2022free}.

           {\textbf{Step 1: proof of the existence of a minimizer.}} We {firstly} claim that $I_0:=\displaystyle\inf_{v\in \mathcal{K} } \mathcal{J} (v) >-\infty$.
           Indeed, for $v\in\mathcal{K} $, by the definition of $\mathcal{J} (v)$, we have
       \begin{align}\label{ex1}
           \mathcal{J} (v)\geq \int_{\Omega } \left(\frac{1}{p}|\nabla v|^p+\lambda_1\left(1-(v^+)^2\right)^2-|\lambda_2v|\right) \text{d}x.
       \end{align}
       Since $\lambda_1\geq 0$, it follows that
       \begin{align}\label{ex4}
       \lambda_1\left(1-(v^+)^2\right)^2\geq 0.
       \end{align}

      Note that \begin{align}\label{ex2}
   \int_{\Omega }  |\lambda _2v|\text{d}x
           \leq &  C(N,p,|\lambda _2|,\Omega ) \|v\|_{L^p(\Omega)}  \notag\\
   \leq &C(N,p,|\lambda _2|,\Omega ) \|v-g\|_{L^p(\Omega)}+ C(N,p,|\lambda _2|,\Omega )\|g\|_{L^p(\Omega)} \notag\\
   \leq & C(N,p,|\lambda _2|,\Omega ) \|\nabla v-\nabla g\|_{L^p(\Omega)}+ C(N,p,|\lambda _2|,\Omega )\|g\|_{L^p(\Omega)}\notag\\
   \leq &C(N,p,|\lambda _2|,\Omega ) \|\nabla v \|_{L^p(\Omega)}+ C(N,p,|\lambda _2|,\Omega )\|g\|_{W^{1,p}(\Omega)}\notag\\
   \leq & \varepsilon\int_{\Omega} |\nabla v|^p\text{d}x+ C(\varepsilon,N,p,|\lambda _2|,\Omega )+C(N,p,|\lambda _2|,\Omega )\|g\|_{W^{1,p}(\Omega)},
       \end{align}
   where in the last inequality we used the Young's inequality with {$\varepsilon >0$}.

 By virtue of \eqref{ex1},  \eqref{ex4}, {and} \eqref{ex2}, {and} choosing a suitable $\varepsilon $, we have $I_0>-\infty$.

      Consider a minimizing sequence $\{v_j\}_{j\in\mathbb{N} ^+}$ and $j_0\in\mathbb{N} ^+$ such that $\mathcal{J} (v_j)\leq I_0+1$ for all $j\geq j_0$.
 {By   \eqref{ex2}}, for $\varepsilon >0$ and $j\geq j_0$, we have
       {\begin{align}\label{ex5}
           \int_{\Omega }  \frac{1}{p}|\nabla v_j|^p\text{d}x
           =& \mathcal{J} (v_j)- \int_{\Omega } \left(\lambda_1 \left(1-(v_j^+)^2\right)^2+\lambda_2v_j^+\right) \text{d}x \notag\\
           \leq& \mathcal{J} (v_j)+|\lambda_2|\int_{\Omega }  |v_j|\text{d}x\notag\\
           \leq&I_0+1+C\left(N,p,|\lambda _2|,\Omega\right) \|g\|_{W^{1,p}(\Omega )}  + C(\varepsilon ,N,p,|\lambda _2|,\Omega )+\varepsilon \int_{\Omega }  |\nabla v_j|^p\text{d}x.
       \end{align}}
       A suitable choice of $\varepsilon>0 $ implies that $\int_{\Omega } \frac{1}{p}|\nabla v_j|^p\text{d}x$ is uniformly bounded in $j\in\mathbb{N} ^+$.
       Meanwhile, by the Sobolev embedding inequality we have
       \begin{align*}
          \int_{\Omega }  \frac{1}{p}|v|^p\text{d}x
          \leq& C(p)\int_{\Omega }  \left(|v-g|^p+|g|^p\right)\text{d}x \notag\\
          \leq& C(N,p,\Omega )\int_{\Omega }  |\nabla v-\nabla g|^p\text{d}x+C(p)\int_{\Omega }  |g|^p\text{d}x \notag\\
          \leq& C(N,p,\Omega )\int_{\Omega }  |\nabla v|^p\text{d}x+C(N,p,\Omega )\int_{\Omega } |\nabla g|^p\text{d}x+C(p)\int_{\Omega } |g|^p\text{d}x,
      \end{align*}
       which, along with the uniform boundedness of $\int_{\Omega } \frac{1}{p}|\nabla v_j|^p\text{d}x$, implies
       the uniform boundedness of $\int_{\Omega }  \frac{1}{p}| v_j|^p\text{d}x$. Therefore, $\{v_j\}$ is uniformly bounded in $W^{1,p}(\Omega )$.
       Furthermore, we have $\{v_j-g\}$ is a uniformly bounded sequence in $W^{1,p}_0(\Omega )$.
       Due to the reflexivity of $W^{1,p}_0(\Omega )$, there exists $u\in W^{1,p}(\Omega )$ with $u-g\in W^{1,p}_0(\Omega )$ such that $v_j\rightharpoonup u$ in $W^{1,p}(\Omega )$.
       Then, {there holds} $v_j\to u$ in $L^p(\Omega )$. {Therefore, up to subsequence, we have} $v_j\rightarrow u$ a.e. in $\Omega $.
       Due to the lower semi-continuity of {norms}, we have
       \begin{align}\label{ex6}
           \int_{\Omega }  \frac{1}{p}|\nabla u|^p\text{d}x\leq \liminf_{j\rightarrow \infty} \int_{\Omega }  \frac{1}{p}|\nabla v_j|^p\text{d}x.
       \end{align}
       Since the sequence {$\left\{\left(1-\left(v^+_j\right)^2\right)^2\right\}$} is bounded from below, we infer from the Fatou's Lemma that
      \begin{align}\label{ex7}
           \int_{\Omega }  \left(1-(u^+)^2\right)^2\text{d}x\leq \liminf_{j\rightarrow \infty} \int_{\Omega }   \left(1-(v_j^+)^2\right)^2\text{d}x.
       \end{align}
       Note also that
        \begin{align}\label{ex8}
           \int_{\Omega } u^+\text{d}x\leq \liminf_{j\rightarrow \infty} \int_{\Omega } v_j^+\text{d}x.
       \end{align}
       Therefore, by \eqref{ex6},  {\eqref{ex7}, and} \eqref{ex8}, we deduce that $\mathcal{J} (u)\leq \displaystyle\liminf_{j\rightarrow \infty}\mathcal{J} (v_j)$, which implies that $u$ is a minimizer of $\mathcal{J} (u)$ over the set $\mathcal{K}$.

       {\textbf{Step 2: proof of the non-negativity of  minimizers.}}
   Define $\xi :=\max\{u,0\}=u^+$. By the minimality of $u$ we obtain
       \begin{align*}
      0\leq& \int_{\Omega }  \left(\frac{1}{p}| \nabla \xi| ^p-\frac{1}{p}| \nabla u| ^p+\lambda_1\left(\left(1-(\xi^+)^2\right)^2-\left(1-(u^+)^2\right)^2\right)+\lambda_2\left(\xi^+-u^+\right)\right)\text{d}x\notag\\
%
       = &\int_{\Omega }  \left(\frac{1}{p}| \nabla \xi| ^p-\frac{1}{p}| \nabla u| ^p\right)\text{d}x\\
        =&\int_{{\{u\geq 0\}} }  \left(\frac{1}{p}| \nabla \xi| ^p-\frac{1}{p}| \nabla u| ^p\right)\text{d}x+\int_{{\{u< 0\} }}  \left(\frac{1}{p}| \nabla \xi| ^p-\frac{1}{p}| \nabla u| ^p\right)\text{d}x\\
        \leq& -\int_{{\{u< 0\}} }  \frac{1}{p}| \nabla u| ^p \text{d}x\\
       \leq&{0},
         \end{align*}
     which implies $u\equiv C$ or $u\geq 0$ a.e. in $\Omega $. Since $u=g\geq 0$ {on $\partial \Omega $, we} have $u\geq 0$ {a.e.} in $\Omega$.

       {\textbf{Step 3: proof of the boundedness of   minimizers.}} For each $k\geq k_0:= \max\left\{1, \|g\|_{L^{\infty}(\Omega)}\right\} $, define the function $u_k:\Omega \rightarrow \mathbb{R}$ by
       \begin{equation*}
           u_k=\left\{\begin{aligned}
           & k\cdot\text{sgn}(u)&&\text{if}\ |u|> k,\\
           & u&&\text{if}\ |u|\leq k,
           \end{aligned}\right.
           \end{equation*}
       where $\text{sgn}(u)=1$ if $u\geq 0$ and  $\text{sgn}(u)=-1$ if $u< 0$.
       Let $S_k:=\{|u|>k\}$. For each $k\geq k_0$ we have
       \begin{align*}
           u=u_k \ \text{in} \ {S_k}^c\quad \text{and} \quad u_k = k\cdot \text{sgn}(u) \  \text{in} \ S_k.
       \end{align*}

       Noting $(|u|-k)^+\in W^{1,p}_0(\Omega )$ for $k\geq k_0$ and by the minimality of $u$, {we obtain}
    \begin{align*}
         \int_{S_k}  \frac{1}{p}|\nabla u|^p\text{d}x
        =&\int_{\Omega }  \left(\frac{1}{p}|\nabla u|^p-\frac{1}{p}|\nabla u_k|^p\right)\text{d}x\notag\\
        \leq& \int_{S_k }  \left(\lambda_1 \left(\left(1-(u_k^+)^2\right)^2-\left(1-(u^+)^2\right)^2\right)+\lambda_2(u_k^+-u^+)\right)\text{d}x\notag\\
       =&  {2\lambda_1} \int_{S_k }   \left((u^+)^2-(u^+_k)^2\right)\text{d}x+{\lambda_1 }\int_{S_k }  \left((u^+_k)^4-(u^+)^4\right)\text{d}x+{\lambda_2}\int_{S_k} (u_k^+-u^+)\text{d}x\notag\\
       = & {2\lambda_1} \int_{S_k }   \left((u^+)^2-(u^+_k)^2\right)\text{d}x+{\lambda_1}\int_{S_k\cap \{u\geq 0\}}  \left(k^4-u^4\right)\text{d}x+ {\lambda_2}\int_{S_k\cap \{u\geq 0\}} (k-u)\text{d}x\notag\\
       &+{\lambda_1}\int_{S_k\cap \{u<0\}} \left(((-k)^+)^4-(u^+)^4\right)\text{d}x +{\lambda_2}\int_{S_k\cap \{u<0\}} \left((-k)^+-u^+\right) \text{d}x\notag\\
      \leq &{2\lambda_1} \int_{S_k }   \left((u^+)^2-(u^+_k)^2\right)\text{d}x+ |\lambda_2 |\int_{S_k\cap \{u\geq 0\}} |u-k|\text{d}x\notag\\
      =& 2\lambda_1\int_{S_k}\left(|u|^2-k^2\right)\text{d}x+ |\lambda_2 |\int_{S_k\cap \{u\geq 0\}} ||u|-k|\text{d}x\notag\\
      \leq& 2\lambda_1\int_{S_k}\left(2\left||u|-k\right|^2+2k^2-k^2\right)\text{d}x+ |\lambda_2 |\int_{S_k } ||u|-k|\text{d}x\notag\\
      \leq& 4\lambda_1\left(\int_{S_k}||u|-k|^2\text{d}x+  k^2 |S_k|\right) +\frac{|\lambda_2 |}{2}\left( \int_{S_k}||u|-k|^2\text{d}x+|S_k| \right),
   \end{align*}
               which, along with $k\geq 1$, ensues that
          \begin{align}\label{bou5}
           \int_{S_k}  \frac{1}{p}|\nabla u|^p\text{d}x
           \leq& \left(4\lambda_1+\frac{|\lambda_2 |}{2}\right)\left(\int_{S_k}||u|-k|^2\text{d}x+k^2 |S_k|\right).
          \end{align}

Define $K_n:=\frac{K}{2}\left(1-\frac{1}{2^{n+1}}\right)$ for all $n\in\mathbb{N} $, where $K_n$, $K\geq k_0$. Let $g_n:=\int_{S_{K_n}} \left((|u|-K_n)^+\right)^2\text{d}x$ for all $n\in\mathbb{N}$. We claim that
   \begin{align} \label{iteration-g}
   g_{n+1}\leq CD^ng_n^{1+\zeta },\ \forall n\in\mathbb{N} ,
   \end{align}
   where $C$, $\zeta>0$, and $D>1$ are constants not depending on $n$.

  We prove in two cases:  ${p\in\left(\frac{2N}{N+2},N\right)}$ and $p\in[N,+\infty)$.

   \textbf{Case 1: ${p\in\left(\frac{2N}{N+2},N\right)}$.} Indeed,  when ${p\in\left(\frac{2N}{N+2},N\right)}$, it follows that $2< \frac{Np}{N-p}$, then we have
          \begin{align}\label{bou6}
           g_{n+1}=& \int_{S_{K_{n+1}}}  \left((|u|-K_{n+1})^+\right) ^2\text{d}x\notag\\
           \leq & \left(\int_{S_{K_{n+1}}}  \left((|u|-K_{n+1})^+\right) ^{p^\star}\text{d}x\right)^\frac{2}{p^\star}\left|S_{K_{n+1}}\right|^{1-\frac{2}{p^\star}}\notag\\
           \leq& \left(\int_{\Omega }  \left((|u|-K_{n+1})^+\right) ^{p^\star}\text{d}x\right)^\frac{2}{p^\star}\left|S_{K_{n+1}}\right|^{1-\frac{2}{p^\star}}\notag\\
           \leq& C(N,p,\Omega )\left(\int_{\Omega }  \left|\nabla (|u|-K_{n+1})^+\right|^{p}\text{d}x\right)^\frac{2}{p}\left|S_{K_{n+1}}\right|^{1-\frac{2}{p^\star}}\notag\\
           \leq& C(N,p,\Omega )\left(\int_{S_{K_{n+1}}}  |\nabla u|^{p}\text{d}x\right)^\frac{2}{p}\left|S_{K_{n+1}}\right|^{1-\frac{2}{p^\star}}.
          \end{align}

Noting $K_{n+1}-K_n=\frac{K}{2^{n+3}}$, we get
          \begin{align*}
           \left(\frac{K}{2^{n+3}}\right)^2|S_{K_{n+1}}|
           =  (K_{n+1}-K_n)^2|S_{K_{n+1}}|
           = \int_{S_{K_{n+1}}}  \left|K_{n+1}-K_n\right|^2\text{d}x
           \leq \int_{S_{K_{n+1}}}  \left((|u|-K_n)^+\right) ^2\text{d}x
           \leq  g_n,
          \end{align*}
           which implies that
           \begin{align}\label{bou7}
               \left|S_{K_{n+1}}\right|\leq&\left(\frac{2^{n+3}}{K}\right)^2g_n.
              \end{align}

By  \eqref{bou7}, we deduce that
              \begin{align}\label{bou8}
               \int_{S_{K_{n+1}}}  ||u|-K_{n+1}|^2\text{d}x
               \leq & 2\int_{S_{K_{n+1}}}  ||u|-K_{n}|^2\text{d}x+2\int_{S_{K_{n+1}}}  |K_n-K_{n+1}|^2\text{d}x\notag\\
               \leq& 2\int_{S_{K_{n}}}  ||u|-K_{n}|^2\text{d}x+2|K_n-K_{n+1}|^2\left|S_{K_{n+1}}\right|\notag\\
               \leq& 2g_n+2\left(\frac{2^{n+3}}{K}\right)^2\left(\frac{K}{2^{n+3}}\right)^2g_n\notag\\
               =& 4g_n.
              \end{align}
Then we have
               \begin{align}\label{bou9}
               K_{n+1}^2 |S_{K_{n+1}}|\leq  \left(\frac{K}{2}\left(1-{\frac{1}{2^{n+2}}}\right)\right)^2\left(\frac{2^{n+3}}{K}\right)^2g_n
               \leq  2^{2n+4}g_n.
              \end{align}

 The inequalities of \eqref{bou5}, {\eqref{bou6}, \eqref{bou7}, \eqref{bou8}, and \eqref{bou9}} imply that
              \begin{align*}
               g_{n+1}
               \leq& C(N,p,\lambda_1,|\lambda_2|,\Omega )\left(\int_{S_{K_{n+1}}}  \left||u|-K_{n+1}\right|^2\text{d}x+K_{n+1}^2 \left|S_{K_{n+1}}\right|\right)^\frac{2}{p}\left|S_{K_{n+1}}\right|^{1-\frac{2}{p^\star}}\notag\\
               \leq& C(N,p,\lambda_1,|\lambda_2|,\Omega )\left(4g_n+2^{2n+4}g_n\right)^\frac{2}{p}\left(\left(\frac{2^{n+3}}{K}\right)^2g_n\right)^{1-\frac{2}{p^\star}}\notag\\
               \leq& C(N,p,\lambda_1,|\lambda_2|,\Omega )\left(4+2^{2n+4}\right)^\frac{2}{p}g_n^\frac{2}{p}\left(\frac{2^{n+3}}{K}\right)^{2-\frac{4}{p^\star}}g_n^{1-\frac{2}{p^\star}}\notag\\
         \leq& C(N,p,\lambda_1,|\lambda_2|,k_0,\Omega )\left(2^{2n+4}\right)g_n^\frac{2}{p}\left({2^{2n+6}}\right)g_n^{1-\frac{2}{p^\star}}\notag\\
               \leq& C(N,p,\lambda_1,|\lambda_2|,k_0,\Omega )16^ng_n^{1+\frac{2}{p}-\frac{2}{p^\star}}.
           \end{align*}		
   Therefore, \eqref{iteration-g} holds true with $C:=C(N,p,\lambda_1,|\lambda_2|,k_0,\Omega ),D:=16$, and $\zeta:=\frac{2}{p}-\frac{2}{p^\star}>0$. In particular, since
               \begin{align*}
                   \left(\frac{K}{4}\right)^{p^\star}\left|S_{k_0}\right|
                   = (K_0)^{p^\star}\left|S_{k_0}\right|
                   \leq \int_{S_{K_0}}  |u|^{p^\star}\text{d}x
                   \leq \int_{\Omega }  |u|^{p^\star}\text{d}x,
               \end{align*}
  it follows that
               \begin{align*}
                   g_0=   \int_{S_{K_0}}  \left((|u|-K_0)^+\right) ^2\text{d}x
                   \leq    \int_{S_{K_0}}  |u|^2\text{d}x
                   \leq   \| u\|_{L^{p^\star}(\Omega )}^{{2}}\left|S_{K_0}\right|^{1-\frac{2}{p^\star}}
                   \leq   \left(\frac{4}{K}\right)^{p^\star-2}  \| u\|_{L^{p^\star}(\Omega )}^{{p^\star}}.
               \end{align*}
   Furthermore, considering a sufficiently large $K$ depending on $\| u\|_{L^{p^\star}(\Omega )}$, it holds that
               \begin{align}\label{g0}
                   {g_0\leq C^{-\frac{1}{\zeta }}D^{-\frac{1}{\zeta^2 }}}.
               \end{align}

    \textbf{Case 2: ${p\in[N,+\infty)}$.}  {Since $N>2$, we} can choose  $q\in (2,N) $.  It is clear that
     {\begin{align*}
     2<q<\min\{p,q^{\star},N\}.
     \end{align*}}

      {By the H\"{o}lder's inequality and the Sobolev embedding theorems, we deduce that}
   \begin{align}\label{bound1}
           g_{n+1}=& \int_{S_{K_{n+1}}}  \left((|u|-K_{n+1})^+\right) ^2\text{d}x\notag\\
           \leq & \left(\int_{S_{K_{n+1}}}  \left((|u|-K_{n+1})^+\right) ^{q^\star}\text{d}x\right)^\frac{2}{q^\star}\left|S_{K_{n+1}}\right|^{1-\frac{2}{q^\star}}\notag\\
           \leq& \left(\int_{\Omega }  \left((|u|-K_{n+1})^+\right) ^{q^\star}\text{d}x\right)^\frac{2}{q^\star}\left|S_{K_{n+1}}\right|^{1-\frac{2}{q^\star}}\notag\\
           \leq& C(N,q,\Omega )\left(\int_{\Omega }  \left|\nabla (|u|-K_{n+1})^+\right|^{q}\text{d}x\right)^\frac{2}{q}\left|S_{K_{n+1}}\right|^{1-\frac{2}{q^\star}}\notag\\
            \leq& C(N,p,\Omega )\left(\int_{\Omega }  \left|\nabla (|u|-K_{n+1})^+\right|^{p}\text{d}x\right)^\frac{2}{p}\left|S_{K_{n+1}}\right|^{\frac{2}{q}-\frac{2}{p}}\left|S_{K_{n+1}}\right|^{1-\frac{2}{q^\star}}\notag\\
            \leq& C(N,p,\Omega )\left(\int_{S_{K_{n+1}}}  |\nabla u|^{p}\text{d}x\right)^\frac{2}{p}\left|S_{K_{n+1}}\right|^{1+\frac{2}{q} -\frac{2}{q^\star}-\frac{2}{p}}.
          \end{align}

 The inequalities of \eqref{bou5}, \eqref{bou7}, \eqref{bou8}, \eqref{bou9}, and  \eqref{bound1}  imply that
           \begin{align*}
              g_{n+1}
              \leq& C(N,p,\lambda_1,|\lambda_2|,\Omega )\left(\int_{S_{K_{n+1}}}  \left||u|-K_{n+1}\right|^2\text{d}x+K_{n+1}^2 \left|S_{K_{n+1}}\right|\right)^\frac{2}{p}\left|S_{K_{n+1}}\right|^{1+\frac{2}{q} -\frac{2}{q^\star}-\frac{2}{p}}\notag\\
              \leq& C(N,p,\lambda_1,|\lambda_2|,\Omega )\left(4g_n+2^{2n+4}g_n\right)^\frac{2}{p}\left(\left(\frac{2^{n+3}}{K}\right)^2g_n\right)^{1+\frac{2}{q} -\frac{2}{q^\star}-\frac{2}{p}}\notag\\
              \leq& C(N,p,\lambda_1,|\lambda_2|,\Omega )\left(4+2^{2n+4}\right)^\frac{2}{p}g_n^\frac{2}{p}\left(\frac{2^{n+3}}{K}\right)^{2\left(1+\frac{2}{q}-\frac{2}{q^\star}-\frac{2}{p}\right)}g_n^{1+\frac{2}{q} -\frac{2}{q^\star}-\frac{2}{p}}\notag\\
          \leq& C(N,p,\lambda_1,|\lambda_2|,k_0,\Omega )\left(2^{2n+4}\right)g_n^\frac{2}{p}\left( {2^{4n+12}} \right)g_n^{1+\frac{2}{q} -\frac{2}{q^\star}-\frac{2}{p}}\notag\\
              \leq& C(N,p,\lambda_1,|\lambda_2|,k_0,\Omega )64^ng_n^{1+\frac{2}{q}-\frac{2}{q^\star}}.
          \end{align*}		
   Therefore, \eqref{iteration-g} holds true with $C:=C(N,p,\lambda_1,|\lambda_2|,k_0,\Omega ),D:=64$, and {$\zeta:=\frac{2}{q}-\frac{2}{q^\star}>0$}. In particular, since
               \begin{align*}
                   \left(\frac{K}{4}\right)^{q^\star}|S_{k_0}|
                   = (K_0)^{q^\star}|S_{k_0}|
                   \leq \int_{S_{K_0}}  |u|^{q^\star}\text{d}x
                   \leq \int_{\Omega }  |u|^{q^\star}\text{d}x,
               \end{align*}
    it follows that
               \begin{align*}
              g_0=   \int_{S_{K_0}}    \left((|u|-K_0)^+\right) ^2\text{d}x
                  \leq   \int_{S_{K_0}}     |u|^2\text{d}x
                   \leq  \| u\|_{L^{q^\star}(\Omega )}^{{2}}\left|S_{K_0}\right|^{1-\frac{2}{q^\star}}
                   \leq   \left(\frac{4}{K}\right)^{q^\star-2}  \| u\|_{L^{q^\star}(\Omega )}^{{q^\star}}.
               \end{align*}
      Furthermore, considering a sufficiently large $K$ depending on $\| u\|_{L^{q^\star}(\Omega )}$, we  infer that \eqref{g0} still holds true.

      We have proved that \eqref{iteration-g}, as well as \eqref{g0}, holds true in two cases. Then by Lemma~\ref{jnn}, we have $g_n\rightarrow 0$ as $n\rightarrow+\infty $. {Therefore, we obtain}
               \begin{align*}
                   \lim_{n\rightarrow +\infty}\int_{S_{K_n}}  \left((|u|-K_n)^+\right) ^2\text{d}x =\int_{S_{{K}/{2}}}  \left(\left(|u|-\frac{K}{2}\right)^+\right)^2\text{d}x.
               \end{align*}
               Thus, we have $|u|\leq \frac{K}{2}$ a.e. in $\Omega $, where $K$ is a positive constant depending on $\| u\|_{L^{p^\star}(\Omega )}$ in Case I, and on $\| u\|_{L^{q^\star}(\Omega )}$ in Case II, respectively. Hence, $K$ depends on $\| u\|_{W^{1,p}(\Omega )}$.

               %


               {\textbf{Step 4: proof of the uniform boundedness of  minimizers.}}
               It suffices to show that $\| u\|_{W^{1,p}(\Omega )}$ depends only on $N$, $p$, $|\lambda_2|$, {$\|g\|_{W^{1,p}(\Omega )}$, and the} diameter of $\Omega$.
               By \eqref{ex5}, we deduce that for any $\varepsilon\in(0,1) $, there exists a positive constant $C$ depending only on $\varepsilon $, $N$, $p$, $|\lambda_2|$, {$\|g\|_{W^{1,p}(\Omega )}$, and} the diameter of $\Omega$ such that
               $\int_{\Omega }  \frac{1}{p}|\nabla u|^p\text{d}x\leq \varepsilon  \int_{\Omega }  |\nabla u|^p\text{d}x+C$, namely,
               $\int_{\Omega } |\nabla u|^p\text{d}x\leq \frac{C}{1-\varepsilon }$, which along with $u-g\in W^{1,p}_0(\Omega )$, implies that
               \begin{align*}
                   \int_{\Omega }  |u|^p\text{d}x
                   \leq&C(p)\int_{\Omega }  | u-g|^p\text{d}x+C(p)\int_{\Omega }  |g|^p\text{d}x\notag\\
                   \leq&C(N,p,\Omega )\int_{\Omega }  |\nabla u-\nabla g|^p\text{d}x+C(p )\int_{\Omega }  |g|^p\text{d}x\notag\\
                   \leq&C(N,p,\Omega )\int_{\Omega } |\nabla u|^p\text{d}x+C(N,p,\Omega )\int_{\Omega }  |\nabla g|^p\text{d}x+C(p)\int_{\Omega } |g|^p\text{d}x\notag\\
                   \leq&{C\left(N,p,|\lambda_2|,\|g\|_{W^{1,p}(\Omega )},\Omega\right)}.
               \end{align*}

               Finally, we get that $\|u\|_{W^{1,p}(\Omega )}$ depends only on $N$, $p$, $|\lambda_2|$, {$\|g\|_{W^{1,p}(\Omega )}$, and} the diameter of $\Omega$.
   \end{proof}
    \section{Local $C^{1,\alpha }$-continuity of   minimizers}\label{sec2'}

{In this section, we prove   local $C^{1,\alpha }$-continuity of  minimizers.}
   \begin{theorem}\label{c1ar}
       Let $u$ be a   minimizer of $\mathcal{J}(u)$.  Then $u\in C^{1,\alpha}_\text{loc}(\Omega )$  {with some $\alpha \in (0,1)$}.
       For any $\Omega'\Subset \Omega $, there exists a positive constant $C$ depending only on $N$, $p$, $\lambda_1$, $|\lambda_2|$, $\| g\| _{{L^\infty}(\Omega )}$, $\| g\| _{W^{1.p}(\Omega )}$,
       $\Omega '$, and {the} diameter of $\Omega $ such that $\| u\| _{C^{1,\alpha}(\Omega ')}\leq C$.
   \end{theorem}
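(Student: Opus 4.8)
The plan is to run a Campanato-type excess-decay iteration, comparing $u$ in small balls with its $p$-harmonic replacement and exploiting that the zero-order part of $\mathcal{J}$ is bounded and Lipschitz on the uniformly bounded range of $u$ guaranteed by Theorem~\ref{t1}. Fix $\Omega'\Subset\Omega$ and put $R_0:=\min\{1,\tfrac12\operatorname{dist}(\Omega',\partial\Omega)\}$. For $x_0\in\Omega'$ and $0<R\le R_0$, let $v\in W^{1,p}(B_R(x_0))$ be the $p$-harmonic function with $v-u\in W_0^{1,p}(B_R(x_0))$; then $\int_{B_R}|\nabla v|^p\text{d}x\le\int_{B_R}|\nabla u|^p\text{d}x$ and, by comparison with the constant sub/super-solutions $\mp\|u\|_{L^\infty(\Omega)}$, $\|v\|_{L^\infty(B_R)}\le\|u\|_{L^\infty(\Omega)}=:M$. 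Extending $v$ by $u$ outside $B_R$ produces a competitor in $\mathcal{K}$, so minimality of $u$ (after cancelling the integrals over $\Omega\setminus B_R$) gives
\[
\int_{B_R}\tfrac1p\bigl(|\nabla u|^p-|\nabla v|^p\bigr)\text{d}x\le\int_{B_R}\Bigl[\lambda_1\bigl((1-(v^+)^2)^2-(1-(u^+)^2)^2\bigr)+\lambda_2(v^+-u^+)\Bigr]\text{d}x.
\]
Since $s\mapsto(1-(s^+)^2)^2$ is Lipschitz on $[-M,M]$ and $|s^+-t^+|\le|s-t|$, the right-hand side is at most $C(M,\lambda_1,|\lambda_2|)\int_{B_R}|v-u|\text{d}x$, and H\"older's and Poincar\'e's inequalities (using $v-u\in W_0^{1,p}(B_R)$) bound it in turn by $C R^{N(1-1/p)+1}\bigl(\int_{B_R}|\nabla u-\nabla v|^p\text{d}x\bigr)^{1/p}$.

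Next I would feed this into Lemma~\ref{nkappa} — which treats $1<p<2$ and $p\ge2$ on the same footing — and absorb $\int_{B_R}|\nabla u-\nabla v|^p\text{d}x$ into the left-hand side by Young's inequality; for a suitable choice of the free parameter $\mu\in(0,N)$ this yields
\[
\int_{B_R}|\nabla u-\nabla v|^p\text{d}x\le C R^{N+\kappa}
\]
for some $\kappa=\kappa(N,p)>0$, which moreover can be taken strictly below the exponent $\delta\in(0,1)$ supplied by Lemma~\ref{nsigma}. Writing $\phi(\rho):=\int_{B_\rho(x_0)}|\nabla u-(\nabla u)_\rho|^p\text{d}x$, Lemma~\ref{nsigma} combined with the last display gives $\phi(r)\le C(r/R)^{N+\delta}\phi(R)+C R^{N+\kappa}$ for $0<r\le R\le R_0$. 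The elementary inequality $\phi(r)\le 2^p\phi(\rho)$ whenever $B_r\subseteq B_\rho$ (replace $(\nabla u)_r$ by $(\nabla u)_\rho$ and use Jensen's inequality) verifies the monotonicity hypothesis of Lemma~\ref{phir}, which I apply with $\alpha=N+\delta$ and $\beta=N+\kappa$ (admissible since $\kappa<\delta$) to obtain $\phi(r)\le D\,r^{N+\kappa'}$ for every $\kappa'\in(0,\kappa)$ and all $r\in(0,R_0]$. By the integral (Campanato) characterization of H\"older continuity this forces $\nabla u\in C^{0,\kappa'/p}_{\mathrm{loc}}(\Omega)$, i.e.\ $u\in C^{1,\alpha}_{\mathrm{loc}}(\Omega)$ with $\alpha:=\kappa'/p\in(0,1)$; tracking the constants through the iteration and using Theorem~\ref{t1} to control $M$ and $\|\nabla u\|_{L^p(\Omega)}$ yields the stated dependence of $\|u\|_{C^{1,\alpha}(\Omega')}$.

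The step I expect to be the real obstacle is obtaining the genuinely superlinear decay $R^{N+\kappa}$ instead of the trivial rate $R^N$ that a crude $L^\infty$ bound on the zero-order terms would give: one must use the Poincar\'e inequality on the perturbation and then balance it against \emph{both} terms of Lemma~\ref{nkappa} by Young's inequality, with $\mu$ tuned so that the resulting $\kappa$ is positive and, simultaneously, smaller than $\delta$ so that Lemma~\ref{phir} is applicable. The remaining ingredients — existence and the $L^\infty$-bound for the $p$-harmonic replacement, the cancellation in the minimality inequality, and the Campanato iteration through Lemmas~\ref{nsigma} and~\ref{phir} — are routine.
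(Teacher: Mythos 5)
Your proposal is correct and follows essentially the same route as the paper: $p$-harmonic replacement with the maximum principle, the minimality comparison with a Lipschitz bound on the zero-order terms, absorption via Young's inequality with $\mu\in(N-1,N)$ in Lemma~\ref{nkappa}, then the excess-decay iteration through Lemmas~\ref{nsigma} and~\ref{phir} and the Campanato embedding. The only differences are bookkeeping ones — you control $\int_{B_R}|u-v|\,\text{d}x$ by H\"older--Poincar\'e and absorb powers of the excess directly, where the paper uses the $W^{1,1}\hookrightarrow L^{1^\star}$ embedding with an $\varepsilon$-weighted Young inequality, and you verify the monotonicity hypothesis of Lemma~\ref{phir} by Jensen's inequality rather than citing Lieberman's inequality (5.1) — and these are immaterial.
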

   \begin{proof} Let $B_R=B_R{(x_0)}$ for some $R\leq R_0\leq 1$, where $R_0$ will be chosen later.
   Without loss of generality, let $B_r\Subset B_R \Subset \Omega ${, and} $B_r$ and $B_R$ have the same center.
   Let $v$ be a $p$-harmonic function in $B_R$ satisfying
   \begin{equation*}
       \left\{\begin{aligned}
       & \divi~ \left(|\nabla v|^{p-2}\nabla v\right)=0 && \text{in}\ B_R,\\
       &  v=u&&  \text{on}\ \partial B_R.
       \end{aligned}\right.
       \end{equation*}

By Lemma~\ref{nsigma} and Lemma~\ref{nkappa}, there exists a constant $\delta  \in (0,1)$ such that
       \begin{align}\label{ji7}
      \int_{B_r}  |\nabla u-(\nabla u)_r|^p\text{d}x
       \leq &C(N,p)\left(\frac{r}{R}\right)^{N+\delta }\int_{B_R}  |\nabla u-(\nabla u)_R|^p\text{d}x +C(N,p)\int_{B_R} |\nabla u-\nabla v|^p\text{d}x\notag\\
      \leq & C(N,p)\left(\frac{r}{R}\right)^{N+\delta } \int_{B_R}  |\nabla u-(\nabla u)_R|^p\text{d}x+C\left(\mu,N,p,\|v\|_{L^\infty(B_R)}\right)\int_{B_R}\left(|\nabla u|^p-|\nabla v|^p\right)\text{d}x \notag\\
       &+C\left(\mu,N,p,\|v\|_{L^\infty(B_R)}\right)R^\frac{\mu  }{2}  \left(\int_{B_R}  \left(|\nabla u|^p-|\nabla v|^p\right)\text{d}x\right)^\frac{1}{2},
   \end{align}
       where $\mu  \in(0,N)$ is an arbitrary constant.

       {Note that the maximum principle guarantees that
   \begin{align*}
   \| v\|_{L^\infty(B_R )}\leq \| v\|_{L^\infty(\partial B_R )}=\| u\|_{L^\infty(\partial B_R )}\leq \| u\|_{L^\infty(\Omega )}.
   \end{align*}}  

   In view of the minimality of $u$ and applying the mean value theorem to the function $(1-s^2)^2$, we obtain
    \begin{align}\label{ji1}
         \int_{B_R}  \left(\frac{1}{p}|\nabla u|^p-\frac{1}{p}|\nabla v|^p\right)\text{d}x
       \leq &\int_{B_R}  \left(\lambda_1\left(\left(1-(v^+)^2\right)^2-\left(1-(u^+)^2\right)^2\right)+\lambda_2(v^+-u^+) \right)\text{d}x\notag\\
       = &-4\lambda_1\int_{B_R}  \xi\left( 1-\xi^2 \right)\left( v^+-u^+\right)\text{d}x +\lambda_2\int_{B_R} \left(v^+-u^+\right)\text{d}x\notag\\
       \leq &4\lambda_1\int_{B_R} \left|\xi\right| \left( 1+\xi^2 \right) \left| v^+-u^+\right|\text{d}x +|\lambda_2|\int_{B_R} \left|v^+-u^+\right|\text{d}x\notag\\
   \leq &  4\lambda_1 \| u\| _{L^\infty(\Omega )}\left(1+\| u\| _{L^\infty(\Omega )}^2\right) \int_{B_R} \left| v^+-u^+\right|\text{d}x +|\lambda_2|\int_{B_R} \left|v^+-u^+\right|\text{d}x\notag\\
   \leq &  C \int_{B_R}  \left| v^+-u^+\right|{\text{d}x,}
   \end{align}
 where $\xi \in \left(\min\{u^+,v^+\},\max\{u^+,v^+\}\right)\subset \left[0,\| u\| _{L^\infty(\Omega )}\right]$, {and $C$ depends only on $\lambda_1$, $|\lambda_2|$, and $\|u\|_{L^\infty(\Omega )}$}.

     Note that
   \begin{align}
       \int_{B_R} \left|v^+-u^+\right|\text{d}x
       =&\int_{\{uv\geq 0\}\cap B_R} \left|v^+-u^+\right|\text{d}x+\int_{\{uv< 0\}\cap B_R} \left|v^+-u^+\right|\text{d}x\notag\\
       \leq&\int_{\{uv\geq 0\} \cap B_R} |v -u |\text{d}x+\int_{\{uv<0  \} \cap B_R}|v -u |\text{d}x\notag\\
           =&\int_{B_R} |v-u|\text{d}x\notag\\
       \leq& |B_R|^\frac{1}{N}\cdot \| v-u\|_{L^{1^\star}(B_R)}\notag\\
       \leq& C(N,p,\Omega )R\cdot \int_{B_R}  |\nabla(v-u)|\text{d}x \notag\\
       \leq&  C(N,p,\Omega )\varepsilon R\int_{B_R}  |\nabla v-\nabla u|^p\text{d}x +C(\varepsilon,N,p,\Omega  )R^{1+N},\label{ji4}
      \end{align}
      where $\varepsilon >0$ will be chosen later.
We infer from Lemma~\ref{nkappa},  \eqref{ji1}, and  \eqref{ji4} that
      \begin{align*}
        \int_{B_R}  |\nabla u-\nabla v|^p\text{d}x
       \leq& C\int_{B_R}  \left(|\nabla u|^p-|\nabla v|^p\right)\text{d}x +CR^\frac{\mu }{2}\left(\int_{B_R}  \left(|\nabla u|^p-|\nabla v|^p\right)\text{d}x \right)^\frac{1}{2}\notag\\
       \leq& C(\varepsilon)R^{N+1}+C\varepsilon R\int_{B_R} |\nabla u-\nabla v|^p\text{d}x+C(\varepsilon)R^\frac{\mu  +1+N}{2}+C\varepsilon^\frac{1}{2} R^\frac{\mu  +1}{2}\left(\int_{B_R} |\nabla u-\nabla v|^p\text{d}x\right)^\frac{1}{2}\notag\\
       \leq& C(\varepsilon)R^{N+1}  +C\varepsilon R\int_{B_R}  |\nabla u-\nabla v|^p\text{d}x+C(\varepsilon)R^\frac{\mu  +1+N}{2}  +\varepsilon \int_{B_R} |\nabla u-\nabla v|^p\text{d}x+CR^{\mu  +1}.
      \end{align*}

Considering a sufficiently small $\varepsilon$, we get
      \begin{align}\label{ji5}
      \int_{B_R}  |\nabla u-\nabla v|^p\text{d}x\leq C\left(\mu,N,p,\lambda_1 ,|\lambda_2|,\|g\|_{W^{1,p}(\Omega )},\|g\|_{L^\infty(\Omega )} ,\|v\|_{L^\infty(B_R)},\Omega\right)R^m,
      \end{align}
      where $m:=\min\left\{N+1,\ \mu  +1,\ \frac{\mu  +1+N}{2}\right\}$.

By \eqref{ji1}, {\eqref{ji4}, and} \eqref{ji5}, we have
      \begin{align}\label{ji6}
       \int_{B_R}  \left(|\nabla u|^p-|\nabla v|^p\right)\text{d}x\leq CR^{m+1}+CR^{N+1}.
       \end{align}

 Putting \eqref{ji6} into \eqref{ji7}, we get for {all $r\in(0,R]$ {that}}
       \begin{align*}
             \int_{B_r} \left|\nabla u-(\nabla u)_r\right|^p\text{d}x
            \leq & C\left(\frac{r}{R}\right)^{N+\delta }\int_{B_R} |\nabla u-(\nabla u)_R|^p\text{d}x +C\int_{B_R}  \left(|\nabla u|^p-|\nabla v|^p\right)\text{d}x\notag\\
            & +CR^\frac{\mu  }{2}  \left(\int_{B_R}  \left(|\nabla u|^p-|\nabla v|^p\right)\text{d}x\right)^\frac{1}{2}\notag\\
            \leq& C\left(\frac{r}{R}\right)^{N+\delta }\int_{B_R}  |\nabla u-(\nabla u)_R|^p\text{d}x +CR^{N+1}+CR^{m+1}
            +CR^\frac{\mu  +N+1}{2}+CR^\frac{\mu  +m+1}{2}.
           \end{align*}
Since $\mu  \in(0,N)$ is an arbitrary constant, {we can choose $\mu  \in (N-1,N)$.  Therefore,} $\min\left\{m+1,\ \frac{\mu + N+1}{2},\ \frac{\mu + m+1}{2}\right\}>N$.

        We deduce that there exists a constant $\alpha _1>0$ such that
        \begin{align*}
           \int_{B_r} |\nabla u-(\nabla u)_r|^p\text{d}x
            \leq & C\left(\frac{r}{R}\right)^{N+\delta }\int_{B_R}  |\nabla u-(\nabla u)_R|^p\text{d}x +CR^{N+\alpha _1}.
           \end{align*}

 By the inequality (5.1) in \cite{lieberman1991natural}, it holds that
           \begin{align*}
             \int_{B_r}  \left|\nabla u-(\nabla u)_r\right|^p\text{d}x
                \leq & C(N,p)\int_{B_r} |\nabla u-(\nabla u)_R|^p\text{d}x
                \leq C(N,p)\int_{B_R}  |\nabla u-(\nabla u)_R|^p\text{d}x.
               \end{align*}

 Applying Lemma~\ref{phir} with $\tau  (r):=\int_{B_r}  |\nabla u-(\nabla u)_r|^p\text{d}x$,  we deduce that  there exists a constant $\alpha _2\in(0,1)$ such that
               \begin{align*}
                   \int_{B_r}  |\nabla u-(\nabla u)_r|^p\text{d}x \leq {\left(N,p,\lambda_1 ,|\lambda_2|,\|g\|_{W^{1,p}(\Omega )},\|g\|_{L^\infty(\Omega )} ,\|v\|_{L^\infty(B_R)},\Omega\right)}r^{N+\alpha _2},
                   \end{align*}
          {which yields}
           \begin{align*}
                   \int_{B_r}  |\nabla u-(\nabla u)_r|\text{d}x
            \leq&  \left(\int_{B_r}  |\nabla u-(\nabla u)_r|^p\text{d}x \right)^{\frac{1}{p}}|B_r|^{\frac{p-1}{p}}\\
            \leq  &{{C\left(N,p,\lambda_1 ,|\lambda_2|,\|g\|_{W^{1,p}(\Omega )},\|g\|_{L^\infty(\Omega )} ,\|v\|_{L^\infty(B_R)},\Omega\right)}r^{N+\alpha}}
               \end{align*}
            {with $\alpha:=\frac{\alpha_2}{p} \in (0,1)$.} Hence, $\nabla u$ is in the Campanato space. Using the Campanato's embedding theorem, we {obtain the local $C^{1,\alpha}$-continuity of  $ u$.}
         \end{proof}

         \section{Optimal growth  of minimizers near the free boundary}\label{sec3}

         In this section, we prove the optimal growth  for  minimizers of the functional $\mathcal{J}(u)$ near the free boundary.

         \begin{theorem}\label{the31}
         Let $u$ be a   minimizer of $\mathcal{J} (u)$ and assume that $\Gamma^+\neq \emptyset$.  Let $y\in \Gamma^+$ and $B_{r_0}(y)\Subset \Omega $ with some $r_0>0$.
         Then, there exist positive constants $C_0$ and $C_1$ depending only on $r_0$, $N$, $p$, $\lambda_1$, $|\lambda_2|$, $\| g\| _{{L^\infty}(\Omega )}$, $\| g\| _{W^{1.p}(\Omega )}$,
         and {the} diameter of $\Omega $ such that
         \begin{subequations}
         \begin{align}
         &| u(x)| \leq C_0| x-y|^\frac{p}{p-1}, \   x\in B_r(y),\label{u1}\\
         &| \nabla u(x)| \leq C_1| x-y|^\frac{1}{p-1}, \  x\in B_r(y) \label{u2}
         \end{align}
         \end{subequations}
         hold true for all $r<r_0$.
         \end{theorem}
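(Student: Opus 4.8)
The plan is to establish \eqref{u1} first, by a blow-up/compactness argument, and then to deduce the gradient bound \eqref{u2} from \eqref{u1} together with interior estimates. I would start by recording the differential information carried by a minimizer. Since $\mathcal K$ involves no pointwise constraint, $u$ is a free minimizer; testing $\mathcal J$ against $u+t\varphi$, $\varphi\in C_0^\infty(\Omega)$, and using that $s\mapsto\lambda_1(1-(s^+)^2)^2$ is $C^1$ while $s\mapsto\lambda_2s^+$ is Lipschitz, one obtains (as is standard for obstacle-type minimizers) that $u$ is a weak solution of
\[
\operatorname{div}\bigl(|\nabla u|^{p-2}\nabla u\bigr)=\mathfrak f\ \text{ in }\Omega,\qquad \mathfrak f=\bigl(4\lambda_1u(u^2-1)+\lambda_2\bigr)\chi_{\{u>0\}},
\]
so that $\|\mathfrak f\|_{L^\infty(\Omega)}\le\Lambda:=|\lambda_2|+4\lambda_1\|u\|_{L^\infty(\Omega)}(1+\|u\|_{L^\infty(\Omega)}^2)$, which by Theorem~\ref{t1} is bounded by the data listed in the statement. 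By Theorem~\ref{t1} also $u\ge0$, and by Theorem~\ref{c1ar} $u\in C^{1,\alpha}_{\mathrm{loc}}(\Omega)$; in particular $u$ is continuous, so $y\in\Gamma^+$ forces $u(y)=0$, and then $\nabla u(y)=0$ since $y$ is an interior minimum of the nonnegative $C^1$ function $u$.

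For \eqref{u1}, put $S(\sigma):=\sup_{\overline{B_\sigma(y)}}u$, a continuous nonnegative function of $\sigma$, and suppose for contradiction that no $C_0$ works, i.e. $\sup_{0<\sigma<r_0}S(\sigma)/\sigma^{p/(p-1)}=+\infty$. Since $S(\sigma)/\sigma^{p/(p-1)}$ is bounded on every $[\delta,r_0]$, I can choose $\epsilon_k\downarrow0$ with $\max_{[\epsilon_k,r_0]}S(\sigma)/\sigma^{p/(p-1)}\ge k$ and let $r_k\in[\epsilon_k,r_0]$ attain this maximum; then $r_k^{p/(p-1)}\le\|u\|_{L^\infty(\Omega)}/k$, so $r_k\to0$, and $S(\sigma)\le(\sigma/r_k)^{p/(p-1)}S(r_k)$ for $r_k\le\sigma\le r_0$, whence $S(2r_k)\le2^{p/(p-1)}S(r_k)$ once $2r_k<r_0$. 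Define $w_k(x):=u(y+r_kx)/S(r_k)$ on $B_2$. Then $w_k\ge0$, $w_k(0)=0$, $\sup_{\overline{B_1}}w_k=1$, $\sup_{\overline{B_2}}w_k\le2^{p/(p-1)}$, and $w_k$ solves $\operatorname{div}(|\nabla w_k|^{p-2}\nabla w_k)=g_k$ in $B_2$ with $g_k(x)=\bigl(r_k^p/S(r_k)^{p-1}\bigr)\mathfrak f(y+r_kx)$; since $S(r_k)^{p-1}\ge k^{p-1}r_k^p$, we get $\|g_k\|_{L^\infty(B_2)}\le\Lambda/k^{p-1}\to0$. Interior $C^{1,\beta}$ estimates for the $p$-Laplacian with bounded right-hand side bound $\{w_k\}$ in $C^{1,\beta}(\overline{B_1})$ for some $\beta\in(0,1)$; a subsequence converges in $C^1(\overline{B_1})$ to $w_\infty$, which is nonnegative, has $w_\infty(0)=0$ and $\sup_{\overline{B_1}}w_\infty=1$, and is $p$-harmonic in $B_1$ (pass to the limit in the weak formulation, $g_k\to0$). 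By the Harnack inequality for nonnegative $p$-harmonic functions, such a function vanishing at an interior point is identically zero, so $w_\infty\equiv0$, contradicting $\sup_{\overline{B_1}}w_\infty=1$. Hence \eqref{u1} holds for $|x-y|\le r_1$ with some $r_1\in(0,r_0]$; for $r_1\le|x-y|<r_0$ it follows from $|u|\le\|u\|_{L^\infty(\Omega)}$, after enlarging $C_0$. Since this argument is insensitive to the base point, the \emph{same} $C_0$ satisfies $\sup_{B_\sigma(z)}u\le C_0\sigma^{p/(p-1)}$ for every $z\in\Gamma^+$ with $B_{r_0}(z)\Subset\Omega$ and every $\sigma<r_0$.

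To obtain \eqref{u2} from \eqref{u1}, fix $x\in B_r(y)\setminus\{y\}$ and set $d:=|x-y|<r_0$. If $u(x)=0$ then $\nabla u(x)=0$ and \eqref{u2} is trivial; otherwise $x\in\{u>0\}$ and I put $\rho:=\operatorname{dist}(x,\{u=0\})$, so $0<\rho\le d$ (as $u(y)=0$). If $d\ge r_0/4$, then by Theorem~\ref{c1ar} $|\nabla u(x)|\le\|u\|_{C^1(\overline{B_{r_0}(y)})}\le C\le C(r_0/4)^{-1/(p-1)}d^{1/(p-1)}$; so assume $d<r_0/4$, hence $\rho<r_0/4$, and pick $z\in\partial B_\rho(x)$ with $u(z)=0$, so $z\in\Gamma^+$, $|z-y|\le2d$, and $B_{r_0}(z)\Subset\Omega$. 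Since $B_\rho(x)\subset B_{2\rho}(z)$, the uniform form of \eqref{u1} at $z$ gives $\sup_{B_\rho(x)}u\le C_0(2\rho)^{p/(p-1)}$. As $u$ solves $\operatorname{div}(|\nabla u|^{p-2}\nabla u)=\mathfrak f$ in $B_\rho(x)\subset\{u>0\}$ with $\|\mathfrak f\|_{L^\infty}\le\Lambda$, the interior gradient estimate for this equation on $B_\rho(x)$ yields
\[
|\nabla u(x)|\le\|\nabla u\|_{L^\infty(B_{\rho/2}(x))}\le C(N,p)\Bigl(\tfrac1\rho\sup_{B_\rho(x)}u+\rho^{1/(p-1)}\Lambda^{1/(p-1)}\Bigr)\le C\rho^{1/(p-1)}\le Cd^{1/(p-1)},
\]
which is \eqref{u2}; tracking constants, $C_0$ and $C_1$ depend only on the listed quantities.

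The main obstacle is Step~1: selecting $r_k$ so that the blow-ups $w_k$ remain normalized on $\overline{B_1}$ while the rescaled forcing $g_k\to0$ uniformly, and then invoking \emph{uniform} interior $C^{1,\beta}$ estimates for $p$-Laplace-type equations with bounded right-hand side together with the Harnack inequality for nonnegative $p$-harmonic functions. A secondary, bookkeeping point is the a priori claim that a minimizer is a global weak solution with bounded right-hand side $\mathfrak f$, so that the equation is stable under rescaling. An alternative route for \eqref{u1}, nearer to the tools used for Theorem~\ref{c1ar}, is to compare $u$ on $B_\sigma(y)$ with its $p$-harmonic replacement $v$: refining \eqref{ji4}--\eqref{ji5} via Young's inequality at the optimal exponent gives $\int_{B_\sigma(y)}|\nabla(u-v)|^p\le C\sigma^{N+p/(p-1)}$ (cleanly for $p\ge2$, using Lemma~\ref{nkappa} together with $\int(|\nabla u|^p-|\nabla v|^p)\ge c\int|\nabla(u-v)|^p$), after which a dyadic iteration exploiting the Harnack inequality for $v$ transfers the vanishing of $u$ at $y$ across scales; I prefer the compactness argument because it handles all admissible $p$ at once.
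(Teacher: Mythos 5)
Your overall strategy (blow-up, uniform $C^{1,\beta}$ compactness, Harnack for the nonnegative $p$-harmonic limit) is essentially the paper's, and your derivation of \eqref{u2} from \eqref{u1} via the distance-to-the-free-boundary trick is a legitimate alternative to the paper's second blow-up normalized by $\sup|\nabla u|$. But there is a genuine gap in how you set up the contradiction for \eqref{u1}. You fix one minimizer $u$ and one point $y$ and suppose $\sup_{0<\sigma<r_0}S(\sigma)/\sigma^{p/(p-1)}=+\infty$; refuting this only shows that this supremum is finite, i.e.\ it produces a constant $C_0=C_0(u,y)$ with no control in terms of the data. The theorem asserts that $C_0,C_1$ depend only on $r_0,N,p,\lambda_1,|\lambda_2|,\|g\|_{L^\infty},\|g\|_{W^{1,p}}$ and $\operatorname{diam}\Omega$, and your closing claims (``the same $C_0$ for every $z\in\Gamma^+$'', ``tracking constants\dots'') are not justified by the argument as written; indeed a purely qualitative finiteness statement has no constants to track. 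Worse, your proof of \eqref{u2} actually \emph{uses} the uniform-in-base-point version of \eqref{u1} at a nearby free boundary point $z\neq y$, so the gap propagates. The repair is exactly the paper's device: negate the uniform statement, i.e.\ take sequences of minimizers $u_k$ (with the same data), free boundary points $y_k$, and radii with ratio $\geq k$, and run the blow-up along these sequences; the uniform $L^\infty$ bound of Theorem~\ref{t1} and the uniform local $C^{1,\alpha}$ bound of Theorem~\ref{c1ar} (which are uniform over all minimizers for fixed data) give the compactness needed, and the base point drops out after recentering. (Also, $B_{r_0}(z)\Subset\Omega$ need not hold for the nearby point $z$; use radius $r_0/2$ there.)

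A secondary point: you assume the minimizer solves $\operatorname{div}(|\nabla u|^{p-2}\nabla u)=\mathfrak f$ weakly in all of $\Omega$ with $\mathfrak f=(4\lambda_1u(u^2-1)+\lambda_2)\chi_{\{u>0\}}$. The paper only proves the Euler--Lagrange equation \eqref{euler} \emph{inside} $\{u>0\}$, and since $s\mapsto\lambda_2 s^+$ is merely Lipschitz, a two-sided variation only yields $\operatorname{div}(|\nabla u|^{p-2}\nabla u)=f_0+\nu$ with $f_0$ as above and an extra term $\nu$ supported on $\{u=0\}$ bounded by $|\lambda_2|$; your exact identification of $\mathfrak f$ is therefore unjustified, although only the bound $\|\operatorname{div}(|\nabla u|^{p-2}\nabla u)\|_{L^\infty}\leq\Lambda$ matters for your rescaled equation, and that can be extracted from one-sided variations. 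The paper sidesteps this entirely by rescaling the \emph{functional}: it shows $v_k$ minimizes a perturbed energy whose perturbations ($\lambda_{2,k}$, $F_{1,k}$, $F_{2,k}$) vanish as $k\to\infty$, so the limit is $p$-harmonic on the whole ball by minimality, with no need for any equation across the free boundary. If you keep the PDE route, state and prove the $L^\infty$ bound on the distributional $p$-Laplacian of $u$ across $\{u=0\}$; otherwise adopt the paper's minimality-based passage to the limit. With these two repairs (sequence-based contradiction and a justified bounded right-hand side or minimality argument), your proof, including the alternative derivation of \eqref{u2}, goes through.
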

         \begin{proof}
         Since local properties of the free boundary are considered in this paper, we may assume that $u$ is a non-negative minimizer  {defined} in the ball $B_1(y)$ with $y=0\in \Gamma^+$.

        We prove \eqref{u1}. {Define $S(j,u):=\displaystyle \sup_{x\in B_{2^{-j}}} | u(x)|.$}
         It suffices to show that  for all $j\in \mathbb{N} ^+$ there holds
         \begin{align}\label{sj}
         S(j+1,u)\leq \max &\left\{c \left(2^{-j}\right)^\frac{p}{p-1}, S(j,u) \left(2^{-1}\right)^\frac{p}{p-1},..., S(j-m,u)\left(2^{-(m+1)}\right)^\frac{p}{p-1},..., S(0,u)\left(2^{-j-1}\right)^\frac{p}{p-1}\right\}
         \end{align}
         with some $c>0$.

         We prove by contradiction. Suppose that  {\eqref{sj}} does not hold. Then for all $k\in \mathbb{N} ^+$, there exist a sequence of integers $j_k\in \mathbb{N}^+$ and a sequence of minimizers $u_k$ such that
         \begin{align}\label{sjj}
         S(j_k+1,u_k)> \max &\left\{k\left(2^{-j_k}\right)^\frac{p}{p-1}, S(j_k,u_k)\left(2^{-1}\right)^\frac{p}{p-1},..., S\left(j_k-i,u_k\right)\left(2^{-(i+1)}\right)^\frac{p}{p-1},..., S(0,u_k) \left(2^{-j_k-1}\right)^\frac{p}{p-1}\right\}.
         \end{align}

         {We infer from the boundedness of $u_k$  and \eqref{sjj}  that $j_k\rightarrow \infty$ as $k\rightarrow \infty$}.
          For the sake of discussion, we define $\left(1-(u^+)^2\right)^2:=1-F_1(u^+)+F_2(u^+)$ {with  $F_1(t):=2t^2$ and $F_2(t):=t^4$ for $t\in \mathbb{R}$}. Let $v_k(x):=\frac{u_k\left(2^{-j_k}x\right)}{S(j_k+1,u_k)}$, $ \rho_k:=2^{j_k}S(j_k+1,u_k)$,
         {$F_{1,k}(t):=\frac{2S^2(j_k+1,u_k) }{\rho_k^p}t^2$, $F_{2,k}(t):=\frac{S^4(j_k+1,u_k) }{\rho_k^p}t^4$}, $\lambda _{1,k}:=\lambda_1 $, and $\lambda_{2,k}:=\frac{S(j_k+1,u_k) }{\rho_k^p}\lambda_2$.

         By direct computations, for any $k>0$, it follows that
         \begin{align*}
         &\int_{B_{2^{j_k}}}  \left(\frac{1}{p}| \nabla v_k| ^p+\lambda_{1,k}\left(-F_{1,k}\left(v_k^+\right)+F_{2,k}\left(v_k^+\right)\right)+\lambda_{2,k}v^+_k\right)\text{d}x\\
         =&\int_{B_{2^{j_k}}}  \frac{1}{\rho ^p_k}\left(\frac{1}{p}\left| \nabla u_k\left(2^{-j_k}x\right)\right| ^p+\lambda_1\left(-F_1\left(\left(u_k\left(2^{-j_k}x\right)\right)^+\right)+F_2\left(\left(u_k\left(2^{-j_k}x\right)\right)^+\right)\right)+\lambda_2\left(u_k\left(2^{-j_k}x\right)\right)^+\right)\text{d}x\\
         =&\frac{2^{j_kN}}{\rho ^p_k}\int_{B_1}  \left(\frac{1}{p}| \nabla u| ^p+\lambda_1\left(-F_1(u^+)+F_2(u^+)\right)+\lambda_2u^+\right)\text{d}x.
         \end{align*}
         Therefore, $v_k$ is a non-negative minimizer of the functional
       \begin{align*}
         \int_{B_R}\left(\frac{1}{p}| \nabla v| ^p+\lambda_{1,k}\left(1-F_{1,k}(v^+)+F_{2,k}(v^+)\right)+\lambda_{2,k}v^+\right)\text{d}x,
         \end{align*}
where $R:=2^{i_0}<2^{j_k}$ and $i_0\geq 0$ is a fixed number.

         {In view of \eqref{sjj}, we have $\displaystyle \sup_{B_R} |v_k|\leq (2R)^\frac{p}{p-1}$ and $S(j_k+1,u_k)\geq k\left(2^{-j_k}\right)^\frac{p}{p-1}$, which implies}
         \begin{align*}
         &2^{j_kp}S^{p-1}(j_k+1,u_k)\geq 2^{j_kp}k^{p-1}2^{-j_k(p-1)\frac{p}{p-1}}= k^{p-1}.
         \end{align*}

 Letting $k\rightarrow \infty$, we infer from $\displaystyle \sup_{B_R} |v_k|\leq (2R)^\frac{p}{p-1}$ and Theorem~\ref{t1} that
         \begin{subequations}\begin{align}
         &|\lambda_{2,k}|=\frac{S(j_k+1,u_k)|\lambda_2|}{\rho_k^p}=\frac{|\lambda_2|}{2^{j_kp}S^{p-1}(j_k+1,u_k)}\leq \frac{|\lambda_2|}{k^{p-1}}\rightarrow 0,\label{H1}\\
         & \left|F_{1,k}(v_k^+)\right|=\frac{2S^2(j_k+1,u_k)(v_k^+)^2}{\rho_k^p}\leq \frac{2S(j_k+1,u_k)(2R)^\frac{2p}{p-1}}{2^{j_kp}S^{p-1}(j_k+1,u_k)}\leq \frac{C}{k^{p-1}}\rightarrow 0,\label{F1}\\
         &\left|F_{2,k}(v_k^+)\right|=\frac{S^4(j_k+1,u_k)(v_k^+)^4}{\rho_k^p}\leq \frac{S^3(j_k+1,u_k)(2R)^\frac{4p}{p-1}}{2^{j_kp}S^{p-1}(j_k+1,u_k)}\leq \frac{C}{k^{p-1}}\rightarrow 0,\label{F2}
         \end{align}\end{subequations}
         where both $C$ in \eqref{F1} and \eqref{F2} are positive constants that are independent of $k$.

         For $k$ large enough, by the local $C^{1,\alpha }$-regularity of minimizers, we have $\| v_k\| _{C^{1,\alpha }(B_R)}\leq C$, where {$C$ is a positive constant independent of $k$ as $k\rightarrow \infty$ due to \eqref{H1}, \eqref{F1}, and \eqref{F2}}. Then up to a subsequence, we get $v_k\rightarrow v_0$ in $C^{1,\beta }(\overline{B}_r)$ {for any $r<R$ and $ \beta \in (0,\alpha) $.}
         From $v_k(0)=\frac{u_k(0)}{S(j_k+1,u_k)}=0$ and $\displaystyle \sup_{{B_{1/2}}}| v_k|={\displaystyle \sup_{B_{1/2}}u_k\left(2^{-j_k}x\right)}/ {S(j_k+1,u_k)} =1$, we infer that
         \begin{align}\label{v0}
         \displaystyle \sup_{B_{{1}/{2}}}| v_0|=1,\ v_0(0)=0.
         \end{align}

         Furthermore, by the minimality of $v_k$, for any $\varphi \in C_0^\infty(B_r)$, we have
         \begin{align*}
         &\int_{B_r}  \frac{1}{p}|\nabla v_k|^p\text{d}x\\
         \leq & \int_{B_r}   \bigg(\frac{1}{p}\left| \nabla v_k+\nabla \varphi \right| ^p+\lambda_{1,k}\left(F_{1,k}\left(v_k^+\right)-F_{1,k}\left((v_k+\varphi )^+\right)+F_{2,k}\left((v_k+\varphi )^+\right)-F_{2,k}\left(v_k^+\right)\right)+\lambda_{2,k}\left((v_k+\varphi)^+ -v_k^+\right)\bigg) \text{d}x.
         \end{align*}

        {Note} that {$v_k+\varphi$  is uniformly bounded in $B_r$}.
         Based on \eqref{H1}, {\eqref{F1}, and} \eqref{F2}, {for} $k\rightarrow \infty$, we derive
         \begin{align*}
       \int_{B_r}  \left| F_{1,k}\left(v_k^+\right)-F_{1,k}\left((v_k+\varphi )^+\right)\right| \text{d}x
         \leq &\int_{B_r}  \left(\left|F_{1,k}\left(v_k^+\right)\right|+\left|F_{1,k}\left((v_k+\varphi)^+\right)\right|\right) \text{d}x\\
         \leq&   \int_{B_r}  \left(\frac{C}{k^{p-1}}+\frac{2S^2(j_k+1,u_k)\left((v_k+\varphi )^+\right)^2}{\rho_k^p}\right)\text{d}x\\
         \leq &\int_{B_r}  \left(\frac{C}{k^{p-1}}+\frac{2S(j_k+1,u_k)C}{2^{j_kp}S^{p-1}(j_k+1,u_k)}\right)\text{d}x \\
         \leq &\int_{B_r}  \left(\frac{C}{k^{p-1}}+\frac{C}{k^{p-1}}\right)\text{d}x\rightarrow 0,\\
           \int_{B_r}  \left| F_{2,k}\left((v_k+\varphi)^+ \right)-F_{2,k}\left(v_k^+\right)\right| \text{d}x
             \leq &\int_{B_r}  \left(\left| F_{2,k}\left(v_k^+\right)\right|+\left|F_{2,k}\left((v_k+\varphi)^+\right)\right| \right)\text{d}x\\
         \leq& \int_{B_r}  \left(\frac{C}{k^{p-1}}+\frac{S^4(j_k+1,u_k)\left((v_k+\varphi )^+\right)^4}{\rho_k^p}\right)\text{d}x\\
         \leq &\int_{B_r}  \left(\frac{C}{k^{p-1}}+\frac{S^3(j_k+1,u_k)C}{2^{j_kp}S^{p-1}(j_k+1,u_k)}\right)\text{d}x \\
         \leq &\int_{B_r}  \left(\frac{C}{k^{p-1}}+\frac{C}{k^{p-1}}\right)\text{d}x\rightarrow 0,
         \end{align*}
         and	
         \begin{align*}\int_{B_r} | \lambda_{2,k}| \left| (v_k+\varphi)^+-v_k^+ \right| \text{d}x \leq &\int_{B_r}  \left(\frac{|\lambda_2|\left((v_k+\varphi)^++v_k^+ \right)}{k^{p-1}}\right)\text{d}x
         \leq  \int_{B_r}  \frac{C|\lambda_2|}{k^{p-1}}\text{d}x \rightarrow 0,
         \end{align*}
         where $C$ appearing in the above inequalities is {a positive constant, which is   independent} of $k$.
          Therefore, we have $\int_{B_r}  \frac{1}{p}|\nabla v_k|^p\text{d}x\leq \int_{B_r}  \frac{1}{p}|\nabla v_k+\nabla \varphi |^p\text{d}x$.
         Due to  $v_k\rightarrow v_0$ in $C^{1,\beta}(\overline {B}_r)$, we {infer that}
         \begin{align*}
             \int_{B_r}  \frac{1}{p}|\nabla v_0|^p\text{d}x\leq \int_{B_r}  \frac{1}{p}|\nabla v_0+\nabla \varphi |^p\text{d}x.
         \end{align*}	
         Thus, $v_0$ is a $p$-harmonic function in $B_r$.
         Note that $v_k\geq 0$ in $B_1$ and $v_0\geq 0$ in $B_r$. By {virtue of \eqref{v0} and using   the Harnack's inequality, we} obtain $v_0\equiv 0$ in $B_r$.
         Due to the continuity of $v_0$ and the arbitrariness of $r$, we have $v_0\equiv 0$ in $B_R$, which is a contradiction with \eqref{v0}. Thus, \eqref{u1} holds true.

         Now we prove \eqref{u2}.
         Let $S(j,|\nabla u|):=\displaystyle \sup_{x\in B_{2^{-j}}} | \nabla u(x)|$. It suffices to show {that} for all $j\in \mathbb{N} ^+$ there holds
         \begin{align}\label{sj2}
            S(j+1,|\nabla u|)
            \leq &\max \Big\{c\left(2^{-j}\right)^\frac{1}{p-1}, S(j,|\nabla u|)\left(2^{-1}\right)^\frac{1}{p-1},..., S(j-m,|\nabla u|)\left(2^{-(m+1)}\right)^\frac{1}{p-1},..., S(0,|\nabla u|)\left(2^{-j-1}\right)^\frac{1}{p-1}\Big\}
         \end{align}
         with some $c>0$.

         We prove by contradiction. Suppose that {\eqref{sj2}} does not hold. Then for all $k\in \mathbb{N} ^+$, there exist a sequence of integers $j_k\in \mathbb{N}^+$ and a sequence of minimizers $u_k$ such that
         \begin{align}\label{sjj2}
         S(j_k+1,|\nabla u_k|)
         >  \max  \Big\{&k \left(2^{-j_k}\right)^\frac{1}{p-1}, S(j_k,|\nabla u_k|) (2^{-1})^\frac{1}{p-1}, ...,S(j_k-i,|\nabla u_k|)\left(2^{-(i+1)}\right)^\frac{1}{p-1},...,\notag\\
          &S(0,|\nabla u_k|)\left(2^{-j_k-1}\right)^\frac{1}{p-1}\Big\}.
         \end{align}

Let $v_k(x):=\frac{u_k\left(2^{-j_k}x\right)}{2^{-j_k}S(j_k+1,|\nabla u_k|)}$, $\rho_k:=S(j_k+1,|\nabla  u_k|)$,
         $F_{1,k}(t):=\frac{2^{-\left(2j_k-1\right)}S^2(j_k+1,|\nabla u_k|) }{\rho_k^p}t^2$, $F_{2,k}(t):=\frac{2^{-4j_k}S^4(j_k+1,|\nabla u_k|)}{\rho_k^p}t^4$, $\lambda _{1,k}:=\lambda_1 $, and $\lambda_{2,k}:=\frac{2^{-j_k}S(j_k+1,|\nabla u_k|) }{\rho_k^p}\lambda_2$.

         By direct computations, for any $k>0$, it {holds} that
         \begin{align*}
         &\int_{B_{2^{j_k}}}  \left(\frac{1}{p}| \nabla v_k(x)| ^p+\lambda_{1,k}\left(-F_{1,k}\left(v_k^+\right)+F_{2,k}\left(v_k^+\right)\right)+\lambda_{2,k}v_k^+\right)\text{d}x\\
         =&\int_{B_{2^{j_k}}}  \frac{1}{\rho ^p_k}\left(\frac{1}{p}\left| \nabla u_k\left(2^{-j_k}x\right)\right| ^p+\lambda_1\left(-F_1\left(\left(u_k\left(2^{-j_k}x\right)\right)^+\right)+F_2\left(\left(u_k\left(2^{-j_k}x\right)\right)^+\right)\right) +\lambda_2\left(u_k\left(2^{-j_k}x\right)\right)^+\right)\text{d}x\\
         =&\frac{2^{j_kN}}{\rho ^p_k}\int_{B_1}  \left(\frac{1}{p}| \nabla u| ^p+\lambda_1\left(-F_1(u^+)+F_2(u^+)\right)+\lambda_2u^+\right)\text{d}x.
         \end{align*}
         Therefore, $v_k$ is a non-negative minimizer of the functional
         {\begin{align*}
             \int_{B_{R}}  \left(\frac{1}{p}| \nabla v| ^p+\lambda_{1,k}\left(1-F_{1,k}(v^+)+F_{2,k}(v^+)\right)+\lambda_{2,k}v^+\right)\text{d}x,
         \end{align*}}
         where $R:=2^{i_0}<2^{j_k}$ and $i_0\geq 0$ is a fixed number.

         By \eqref{u1} and \eqref{sjj2}, we have $\displaystyle \sup_{B_1} |v_k|\leq \frac{C_0}{k}\rightarrow 0$ as $k\rightarrow \infty$ and $S(j_k+1,|\nabla u_k|)\geq k\left(2^{-j_k}\right)^\frac{1}{p-1},$ {among which} the latter implies
\begin{align*}
         &2^{j_k}S^{p-1}(j_k+1,|\nabla u_k|)\geq 2^{j_k}k^{p-1}2^{-j_k}=k^{p-1}.
         \end{align*}
Then,
 {for} $k\rightarrow \infty$, we have
         \begin{subequations}
         \begin{align}
          &|\lambda_{2,k}|=\frac{2^{-j_k}S(j_k+1,|\nabla u_k|)|\lambda_2|}{\rho_k^p}
         = \frac{|\lambda_2|}{2^{j_k}S^{p-1}(j_k+1,|\nabla u_k|)}
       \leq   \frac{|\lambda_2|}{k^{p-1}}
         \rightarrow  0,\label{H12}\\
       &  \left|F_{1,k}(v_k^+)\right|   =\frac{2^{-\left(2j_k-1\right)}S^2(j_k+1,|\nabla u_k|)\left(v_k^+\right)^2}{\rho_k^p}
         \leq   \frac{2S(j_k+1,|\nabla u_k|)C_0^2}{k^22^{2j_k}S^{p-1}(j_k+1,|\nabla u_k|)}
         \leq   \frac{C}{k^{p+1}}
       \rightarrow  0, \label{F12}\\
        &\left|F_{2,k}(v_k^+)\right|=\frac{2^{-4j_k}S^4(j_k+1,|\nabla u_k|)\left(v_k^+\right)^4}{\rho_k^p}
       \leq   \frac{S^3(j_k+1,|\nabla u_k|)C_0^4}{k^42^{4j_k}S^{p-1}(j_k+1,|\nabla u_k|)}
       \leq  \frac{C}{k^{p+3}}
       \rightarrow  0,\label{F22}
         \end{align}
         \end{subequations}
         where {$C$ in \eqref{F12} and \eqref{F22} is a positive constant  independent of $k$.}

         Then, we get $v_k\rightarrow v_0$ in $C^{1,\beta }(\overline{B}_r)$ with any $r<R$ and {some $ \beta \in (0,1) $}.
         We deduce by $v_k(0)=\frac{u_k(0)}{S(j_k+1,u_k)}=0$ and  {$\displaystyle \sup_{B_{{1}/{2}}}| \nabla v_k|=\displaystyle \sup_{B_{1/2}}\frac{\left|\nabla u_k\left(2^{-j_k}x\right)\right|}{S(j_k+1,|\nabla u_k|)} =1$} that
         \begin{align}\label{v02}
         \displaystyle \sup_{B_{{1}/{2}}}| \nabla v_0|=1,\ v_0(0)=0.
         \end{align}
 Furthermore, by the minimality of $v_k$, for any $\varphi \in C_0^\infty(B_r)$, {it holds that}
         \begin{align*}
         &\int_{B_r}  \frac{1}{p}|\nabla v_k|^p\text{d}x\notag\\
         \leq & \int_{B_r}  \bigg(\frac{1}{p}| \nabla v_k+\nabla \varphi | ^p+\lambda_{1,k}\left(F_{1,k}\left(v_k^+\right)-F_{1,k}\left((v_k+\varphi)^+\right) +F_{2,k}\left((v_k+\varphi)^+\right)-F_{2,k}\left(v_k^+\right)\right)+\lambda_{2,k}\left((v_k+\varphi)^+ -v_k^+\right)\bigg)\text{d}x.
         \end{align*}
 {{Note} that {$v_k+\varphi$  is uniformly bounded in $B_r$}.}
         Based on \eqref{H12}, {\eqref{F12}, and \eqref{F22}}, {for} $k\rightarrow \infty$, we {have}
         \begin{align*}
        \int_{B_r}   \left|F_{1,k}(v_k)-F_{1,k}(v_k+\varphi )\right| \text{d}x
         \leq &\int_{B_r}  \left|F_{1,k}\left(v_k^+\right)\right|+\left|F_{1,k}\left((v_k+\varphi)^+\right)\right| \text{d}x\\
         \leq& \int_{B_r} \left( \frac{C}{k^{p+1}}+\frac{2^{-\left(2j_k-1\right)}S^2(j_k+1,|\nabla u_k|)\left((v_k+\varphi )^+\right)^2}{\rho_k^p}\right) \text{d}x\\
         \leq &\int_{B_r}  \left(\frac{C}{k^{p+1}}+\frac{2S(j_k+1,|\nabla u_k|)C }{2^{2j_k}S^{p-1}(j_k+1,|\nabla u_k|)}\right)\text{d}x \\
         \leq &\int_{B_r}  \left(\frac{C}{k^{p+1}}+\frac{C}{k^{p-1}}\right)\text{d}x
         \rightarrow  0,\\
          \int_{B_r}  \left| F_{2,k}\left((v_k+\varphi )^+\right)-F_{2,k}(v_k^+)\right| \text{d}x
             \leq &\int_{B_r}  \left| F_{2,k}\left(v_k ^+\right)\right|+\left|F_{2,k}\left((v_k+\varphi)^+\right)\right| \text{d}x\\
         \leq& \int_{B_r}  \left(\frac{C}{k^{p+3}}+\frac{2^{-4j_k}S^4(j_k+1,|\nabla u_k|)\left((v_k+\varphi )^+\right)^4}{\rho_k^p}\right)\text{d}x\\
         \leq &{\int_{B_r}  \left(\frac{C}{k^{p+3}}+\frac{CS^3(j_k+1,|\nabla u_k|)}{2^{4j_k}S^{p-1}(j_k+1,|\nabla u_k|)}\right)\text{d}x} \\
         \leq &\int_{B_r}  \left(\frac{C}{k^{p+3}}+\frac{C}{k^{p-1}}\right)\text{d}x
         \rightarrow  0,
         \end{align*}
         and
         \begin{align*}
             \int_{B_r}  | \lambda_{2,k}| \left| (v_k+\varphi)^+-v_k^+ \right| \text{d}x \leq &\int_{B_r}  \frac{|\lambda_2|\left((v_k+\varphi)^++v_k^+\right)}{k^{p-1}}\text{d}x
         \leq\int_{B_r}  \frac{C|\lambda_2|}{k^{p-1}}\text{d}x  \rightarrow 0,
         \end{align*}
         where $C$ appearing in the {above inequalities} {is} independent of $k$.
 Therefore, we have $\int_{B_r}  \frac{1}{p}|\nabla v_k|^p\text{d}x\leq \int_{B_r}  \frac{1}{p}|\nabla v_k+\nabla \varphi |^p\text{d}x$.
         Due to  $v_k\rightarrow v_0$ in $C^{1,\beta}(\overline {B}_r)$, we {infer that}
         \begin{align*}
             \int_{B_r}  \frac{1}{p}|\nabla v_0|^p\text{d}x\leq \int_{B_r}  \frac{1}{p}|\nabla v_0+\nabla \varphi |^p\text{d}x.
         \end{align*}
         Thus, $v_0$ is a $p$-harmonic function in $B_r$.
         Note that $v_k\geq 0$ in $B_1$ and $v_0\geq 0$ in $B_r$. {In view of \eqref{v02} and utilizing the Harnack's inequality}, 
         we obtain $v_0\equiv 0$ in $B_r$.
          Due to the continuity of $v_0$ and the arbitrariness of $r$, we have $v_0\equiv 0$ in $B_R$ and $\displaystyle\sup_{B_{1/2}}|\nabla v_0|=0$, which a contradiction with \eqref{v02}. Thus, \eqref{u2} holds true.
         \end{proof}

 \section{Non-degeneracy of minimizers when $\lambda_2>0$}\label{sec3'}
        In this section, we prove the non-degeneracy property of minimizers  of $\mathcal{J} (u)$ when $\lambda_2>0$}. First, for  general $\lambda_2\in\mathbb{R}$, we show  that any minimizer of $\mathcal{J} (u)$   is a solution to a Ginzburg-Landau equation. More specifically, the following result holds true, whose proof can be  {proceeded} in a similar way as \cite[Lemma 4.2]{zheng2022free}  and hence  is omitted.

         \begin{lemma}
         Let $u$ be a   minimizer of $\mathcal{J} (u)$ with    $\lambda_2\in\mathbb{R}$. Then $u$ is a weak solution {to} the Ginzburg-Landau equation:
         \begin{equation}\label{euler}
         {\divi~\left(| \nabla u|^{p-2}\nabla u\right)=4\lambda_1 \left(-u + u  ^3\right)+\lambda_2 \ \text{in}~\{u>0\}.}
         \end{equation}
         \end{lemma}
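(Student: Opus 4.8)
The plan is to derive \eqref{euler} as the first variation of $\mathcal{J}$ along perturbations that are supported strictly inside the positivity set. Since $u\in C^{1,\alpha}_{\mathrm{loc}}(\Omega)$ by Theorem~\ref{c1ar}, the set $\{u>0\}$ is open, so fix $\varphi\in C_0^\infty(\{u>0\})$. The continuity of $u$ on the compact set $\overline{\mathrm{supp}\,\varphi}\subset\{u>0\}$ gives a constant $c_0>0$ with $u\geq c_0$ there, hence there is $\varepsilon_0>0$ such that $u+\varepsilon\varphi>0$ on $\mathrm{supp}\,\varphi$ for every $|\varepsilon|<\varepsilon_0$; consequently $(u+\varepsilon\varphi)^+=u+\varepsilon\varphi$ on $\mathrm{supp}\,\varphi$ and $(u+\varepsilon\varphi)^+=u^+$ outside it. Moreover $u+\varepsilon\varphi-g=(u-g)+\varepsilon\varphi\in W_0^{1,p}(\Omega)$, so $u+\varepsilon\varphi\in\mathcal{K}$ and the minimality of $u$ forces $\Phi(\varepsilon):=\mathcal{J}(u+\varepsilon\varphi)\geq\Phi(0)$ for all $|\varepsilon|<\varepsilon_0$, i.e. $\varepsilon=0$ is an interior minimizer of $\Phi$.

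Next I would show $\Phi$ is differentiable at $0$ and compute $\Phi'(0)$. For the gradient term, $\varepsilon\mapsto\tfrac1p|\nabla u+\varepsilon\nabla\varphi|^p$ is $C^1$ with $\varepsilon$-derivative $|\nabla u+\varepsilon\nabla\varphi|^{p-2}(\nabla u+\varepsilon\nabla\varphi)\cdot\nabla\varphi$, which is dominated by the integrable function $(|\nabla u|+|\nabla\varphi|)^{p-1}|\nabla\varphi|$ (using $u\in W^{1,p}(\Omega)$ and $\varphi\in C_0^\infty$); dominated convergence then yields $\frac{d}{d\varepsilon}\big|_{\varepsilon=0}\int_\Omega\tfrac1p|\nabla u+\varepsilon\nabla\varphi|^p\,\mathrm{d}x=\int_\Omega|\nabla u|^{p-2}\nabla u\cdot\nabla\varphi\,\mathrm{d}x$. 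For the potential terms, on $\mathrm{supp}\,\varphi$ the maps $\varepsilon\mapsto\lambda_1\bigl(1-(u+\varepsilon\varphi)^2\bigr)^2$ and $\varepsilon\mapsto\lambda_2(u+\varepsilon\varphi)$ are smooth with $\varepsilon$-derivatives bounded (hence integrable, as $\varphi$ is compactly supported and $u\in L^\infty(\Omega)$), while off $\mathrm{supp}\,\varphi$ they are independent of $\varepsilon$; differentiating at $\varepsilon=0$ gives $\int_\Omega\bigl(-4\lambda_1 u(1-u^2)+\lambda_2\bigr)\varphi\,\mathrm{d}x=\int_\Omega\bigl(4\lambda_1(-u+u^3)+\lambda_2\bigr)\varphi\,\mathrm{d}x$.

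Setting $\Phi'(0)=0$ then reads
$$\int_\Omega|\nabla u|^{p-2}\nabla u\cdot\nabla\varphi\,\mathrm{d}x=-\int_\Omega\bigl(4\lambda_1(-u+u^3)+\lambda_2\bigr)\varphi\,\mathrm{d}x\qquad\text{for all }\varphi\in C_0^\infty(\{u>0\}),$$
which is exactly the weak formulation of \eqref{euler}; because both signs of $\varepsilon$ are admissible, the first variation is an equality rather than merely an inequality. An equivalent route, mirroring \cite[Lemma 4.2]{zheng2022free}, replaces the pointwise differentiation of the $\lambda_1$-term by a mean-value expansion of $(1-s^2)^2$ and uses the convexity inequality $\tfrac1p|a+b|^p\ge\tfrac1p|a|^p+|a|^{p-2}a\cdot b$ for the $p$-term, dividing by $\pm\varepsilon$ and letting $\varepsilon\to0^+$.

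I do not expect a genuine obstacle. The only delicate points are (i) the admissibility of the competitor, namely keeping $u+\varepsilon\varphi$ strictly positive on $\mathrm{supp}\,\varphi$ and in $\mathcal{K}$, which is precisely where the continuity of $u$ supplied by Theorem~\ref{c1ar} enters, and (ii) the differentiation under the integral sign for the degenerate principal term $|\nabla u|^p$, which is handled either by the dominated-convergence estimate above or, alternatively, by the monotonicity of the difference quotients coming from convexity of $t\mapsto\tfrac1p|t|^p$. This is the reason the present lemma, stated for general $\lambda_2\in\mathbb{R}$, is harmless once Theorem~\ref{c1ar} is available, and why its proof may be omitted.
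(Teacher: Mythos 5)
Your proposal is correct and follows exactly the route the paper intends: the paper omits the proof, citing \cite[Lemma 4.2]{zheng2022free}, and that argument is precisely your first variation with competitors $u+\varepsilon\varphi$, $\varphi\in C_0^\infty(\{u>0\})$, using the continuity of $u$ to keep the perturbation positive on $\operatorname{supp}\varphi$ and differentiating under the integral sign. The sign bookkeeping ($\frac{d}{d\varepsilon}\big|_{0}\lambda_1(1-(u+\varepsilon\varphi)^2)^2=4\lambda_1(-u+u^3)\varphi$) and the resulting weak formulation match \eqref{euler}, so nothing further is needed.
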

         Now, for $\lambda_2>0$, we   prove    non-degeneracy for  minimizers of the functional $\mathcal{J} (u)$ near the free boundary $\Gamma ^+$.
         \begin{proposition}\label{the32}
         Let $u$ be a   minimizer of $\mathcal{J} (u)$ with  {$\lambda_2>0$}. Then for every $ y\in \Gamma^+$ and $B_{r_1}\Subset \Omega $ with some sufficiently small positive  {constant}
         $r_1\leq\text{min}\left\{r_0,\left(\frac{\lambda_2}{16\lambda_1 C_0}\right)^{\frac{p-1}{p}}\right\},$
         there exists a positive constant $C_2$ depending only on {$r_1$, $N$, $p$, and $\lambda_2$} such that
         \begin{align*}
         \displaystyle \sup_{\partial B_r(y)\cap  \{u>0\}}u \geq {C_2  r^{\frac{p}{p-1}}}
         \end{align*}
         holds true for all {$r\in \left(0,\frac{r_1}{2}\right)$},  where $r_0$ and $C_0$ are positive constants {determined by} Theorem~\ref{the31}.
         \end{proposition}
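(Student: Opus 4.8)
The plan is as follows. Since the statement is local near $\Gamma^+$, I would first reduce, exactly as in the proof of Theorem~\ref{the31}, to the case where $u\geq 0$ is a minimizer on a fixed ball $B_1(y)$ with $y\in\Gamma^+$. The hypotheses $r_1\leq r_0$ and $r_1\leq\left(\frac{\lambda_2}{16\lambda_1 C_0}\right)^{\frac{p-1}{p}}$ are tuned precisely so that, by the optimal growth bound \eqref{u1}, $u(x)\leq C_0|x-y|^{\frac{p}{p-1}}\leq C_0 r_1^{\frac{p}{p-1}}$ on $B_{r_1}(y)$, whence $4\lambda_1 u\leq\frac{\lambda_2}{4}$ there; combining this with $u^3\geq 0$ and the Euler--Lagrange equation \eqref{euler} gives $\divi\left(|\nabla u|^{p-2}\nabla u\right)\geq\frac34\lambda_2=:c_0>0$ in $\{u>0\}\cap B_{r_1}(y)$. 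Thus $u$ is $p$-subharmonic there with a quantitative strict deficit, which is what will drive the growth from below.

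The core is a one-sided comparison on balls contained in the positivity set. Fix $x_0\in\{u>0\}$ and $\rho\le\mathrm{dist}(x_0,\Gamma^+)$ with $\overline{B_\rho(x_0)}\subset B_{r_1}(y)$, so $B_\rho(x_0)\subset\{u>0\}$. Put $M:=\sup_{\partial B_\rho(x_0)}u$ and take the explicit radial barrier $\Phi(x):=M-a\bigl(\rho^{\frac{p}{p-1}}-|x-x_0|^{\frac{p}{p-1}}\bigr)$, where $a=a(N,p,\lambda_2)>0$ is chosen so that $\divi\left(|\nabla\Phi|^{p-2}\nabla\Phi\right)\equiv c_0$; a direct computation shows $|\nabla\Phi|^{p-2}\nabla\Phi$ is an affine vector field (hence $\Phi$ is a classical, and a fortiori weak, solution), with $a^{p-1}=\frac{c_0}{N}\bigl(\frac{p-1}{p}\bigr)^{p-1}$. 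On $\partial B_\rho(x_0)$ one has $\Phi=M\ge u$, while inside $\divi(|\nabla\Phi|^{p-2}\nabla\Phi)=c_0\le\divi(|\nabla u|^{p-2}\nabla u)$; the comparison principle for the $p$-Laplacian then yields $u\le\Phi$ in $B_\rho(x_0)$, and evaluating at $x_0$ gives $M\ge u(x_0)+a\rho^{\frac{p}{p-1}}$. Since $M>0$ the supremum is attained where $u>0$, so
$$\sup_{\partial B_\rho(x_0)\cap\{u>0\}}u\;\ge\;u(x_0)+a\,\rho^{\frac{p}{p-1}}.$$

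To pass from interior points to the free boundary point $y$ I would use two ingredients. First, $\sup_{\overline{B_\rho(x)}}u=\sup_{\partial B_\rho(x)\cap\{u>0\}}u$ whenever this supremum is positive: a maximum attained at an interior point of $\{u>0\}$ would, by the strong maximum principle for $p$-subharmonic functions, force $u$ to be locally constant, contradicting $\divi(|\nabla u|^{p-2}\nabla u)=c_0\ne0$; taking $x=y$ reduces the claim to $\sup_{\overline{B_r(y)}}u\ge C_2 r^{\frac{p}{p-1}}$. Second, I would iterate: choose $x_0\in\{u>0\}$ with $|x_0-y|$ small, set $d_k=\mathrm{dist}(x_k,\Gamma^+)$, and let $x_{k+1}\in\partial B_{d_k}(x_k)$ be a point produced by the comparison step, so $u(x_{k+1})\ge u(x_k)+a\,d_k^{\frac{p}{p-1}}$, as long as $\overline{B_{d_k}(x_k)}$ stays inside $B_{2r}(y)$. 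Since $u$ is bounded (Theorem~\ref{t1}) and, by \eqref{u1} applied at the nearest free boundary point, $u(x_k)\le C_0 d_k^{\frac{p}{p-1}}$, the increments $d_k^{\frac{p}{p-1}}$ are forced to grow at least geometrically; hence the iteration must terminate after finitely many steps by exiting $B_{2r}(y)$, and at that stage the accumulated gain $a\sum_j d_j^{\frac{p}{p-1}}$ exhibits a point of $\overline{B_{2r}(y)}$ where $u\gtrsim r^{\frac{p}{p-1}}$. Applying this with $2r$ in place of $r$ for every $r<r_1/2$, together with the reduction above, gives the assertion with $C_2=C_2(r_1,N,p,\lambda_2)$.

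The main obstacle is precisely this last extension. A single comparison on the connected component of $\{u>0\}\cap B_r(y)$ is genuinely circular: the radial barrier necessarily dips below zero near its center, so $\Phi\ge u$ may fail on the part of $\Gamma^+$ inside the ball — and exactly in the regime in which the sought lower bound is not yet available. The delicate point is therefore the bookkeeping of the iteration: one must control simultaneously the geometric growth of the increments and the displacement of the base points so that the whole chain stays in a ball of comparable radius (a stopping-time/dichotomy device, splitting according to whether $d_k$ is large or small relative to $r$, is the natural tool, together with \eqref{u1} to feed back the values of $u$ into the distances $d_k$). A secondary technical point is that $u$ is only $C^{1,\alpha}$, so the comparison principle must be used in the weak variational sense rather than pointwise.
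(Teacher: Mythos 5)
Your opening reduction coincides with the paper's: by \eqref{u1} and the choice $r_1\leq\left(\frac{\lambda_2}{16\lambda_1C_0}\right)^{\frac{p-1}{p}}$ one gets $4\lambda_1(-u+u^3)+\lambda_2\geq\frac{3}{4}\lambda_2$ on the positivity set, so by \eqref{euler} $u$ satisfies $\divi\left(|\nabla u|^{p-2}\nabla u\right)\geq\frac{3}{4}\lambda_2$ there, and one compares with an explicit radial profile whose $p$-Laplacian is a matching constant. After that you leave the paper's route and set up a chain argument whose crux you do not carry out, and that is a genuine gap. From $u(x_{k+1})\geq u(x_k)+a\,d_k^{\frac{p}{p-1}}$ and $u(x_k)\leq C_0d_k^{\frac{p}{p-1}}$ what follows is geometric growth of the \emph{values}, $u(x_{k+1})\geq(1+a/C_0)u(x_k)$; the $d_k$ themselves have no absolute lower bound, since they are tied to $u(x_0)$, which is arbitrarily small because $x_0$ is taken near $y$. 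Your concluding step -- ``the chain exits $B_{2r}(y)$, hence the accumulated gain $a\sum_j d_j^{\frac{p}{p-1}}$ yields a point where $u\gtrsim r^{\frac{p}{p-1}}$'' -- is a non sequitur as written: exiting only gives $\sum_j d_j\gtrsim r$, and since $\frac{p}{p-1}>1$ this does not bound $\sum_j d_j^{\frac{p}{p-1}}$ from below (many small steps make that sum small). To close the argument one must run the geometric decay backwards from the exit time $K$ to show the total displacement is dominated by the last step, $\sum_{j<K}d_j\leq C\left(u(x_K)/a\right)^{\frac{p-1}{p}}$, whence $u(x_K)\geq c\,r^{\frac{p}{p-1}}$; one must also keep each ball $\overline{B}_{d_k}(x_k)$ inside the region where the inequality $4\lambda_1(-u+u^3)+\lambda_2\geq\frac{3}{4}\lambda_2$ is available (note $d_k\leq|x_k-y|$ only places $B_{d_k}(x_k)$ in $B_{4r}(y)$, which can leave $B_{r_1}$ when $r$ is close to $r_1/2$), and finally transfer the bound from a point of $\overline{B}_{2r}(y)$ back to the sphere $\partial B_r(y)$. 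You yourself flag all of this as ``the delicate point'' and leave it open, so the proposal does not prove the proposition.

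Moreover, the objection with which you discard the single-comparison route applies only to your own barrier (boundary value $M$ minus a radial bump), not to the one the paper uses. Take $v(x):=C_2|x-y|^{\frac{p}{p-1}}$ with $C_2=\left(\frac{3\lambda_2}{4N}\right)^{\frac{1}{p-1}}\frac{p-1}{p}$ and $y\in\{u>0\}$: then $\divi\left(|\nabla v|^{p-2}\nabla v\right)=NC_2^{p-1}\left(\frac{p}{p-1}\right)^{p-1}=\frac{3}{4}\lambda_2\leq\divi\left(|\nabla u|^{p-2}\nabla u\right)$ in $B_r(y)\cap\{u>0\}$, and $v\geq0$ with $v(y)=0$, so on the free boundary portion of the boundary one has $u=0\leq v$ for free; if additionally $u\leq v$ on $\partial B_r(y)\cap\{u>0\}$, the comparison principle on $B_r(y)\cap\{u>0\}$ forces $u\leq v$ there, contradicting $u(y)>0=v(y)$ at the center. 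Hence some $y_0\in\partial B_r(y)\cap\{u>0\}$ satisfies $u(y_0)\geq C_2r^{\frac{p}{p-1}}$, and the case $y\in\Gamma^+$ follows by approximating $y$ by points of $\{u>0\}$ and using the continuity from Theorem~\ref{c1ar}. This single comparison, which is the paper's proof, replaces your entire iteration; no circularity arises because the barrier is nonnegative and vanishes only at the interior center, not on the free boundary.
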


         \begin{proof}
         Let $y\in  \{u>0\}$. For all $x_0\in \Gamma ^+$ satisfying $|y-x_0|\leq \frac{r_1}{2}$, we have $B_r(y)\subset B_{r_1}({x_0}) $ with $r< \frac{r_1}{2}$.
         Define $v(x):=C_2| x-y| ^{\frac{p}{p-1}}$ {in $B_r(y)$}, where $C_2:=\left(\frac{3\lambda_2}{4N}\right)^{\frac{1}{p-1}}\frac{p-1}{p}$.
         By direct computations, we have
         $\nabla v=C_2 \frac{p}{p-1} |x-y|^{\frac{2-p}{p-1}}(x-y)$.
         It follows that
         \begin{align*}
         {\divi~\left(| \nabla v|^{p-2}\nabla v\right)}
         =&\divi~ \left(C_2^{p-2}\left(\frac{p}{p-1}\right)^{p-2}|x-y|^{\frac{p-2}{p-1}}\cdot C_2\frac{p}{p-1}|x-y|^{\frac{2-p}{p-1}}(x-y) \right)\\
         =&C_2^{p-1} \left(\frac{p}{p-1}\right)^{p-1} \divi~(x-y)\\
         =&NC_2^{p-1} \left(\frac{p}{p-1}\right)^{p-1}\   \text{in}\  B_r(y).
         \end{align*}

We deduce by \eqref{euler} that
         \begin{align*}
         {\divi~\left(| \nabla u|^{p-2}\nabla u\right)} =4\lambda_1 {\left(-u+u^3\right)+ \lambda_2} \  {\text{in}\  B_r(y)\cap  \{u>0\}}.
         \end{align*}

         {Noting  that $B_r(y)\subset B_{r_1}({x_0}) $ and applying Theorem~\ref{the31}}, we obtain $|u|\leq C_0r_1^{\frac{p}{p-1}}\leq C_0\frac{\lambda_2}{16\lambda_1 C_0}\leq\frac{\lambda_2}{16\lambda_1 } $ in $B_{r_1}$ for $\lambda_1>0$. Then, we have $4\lambda _1{\left(-u+u^3\right)+\lambda_2}> \frac{3\lambda_2}{4}$ in $B_{r_1}\cap\{u>0\}$ for $\lambda_1>0$. In addition, it is clear that $4\lambda _1{\left(-u+u^3\right)+\lambda_2}> \frac{3\lambda_2}{4}$ in $B_{r_1}$  for  $\lambda_1= 0$. Therefore, for any {$\lambda_1\geq 0$ and $\lambda_2 > 0$}, it always holds that
         \begin{align*}
          \divi~\left(| \nabla u|^{p-2}\nabla u\right) \geq   \frac{3}{4}\lambda_2
          \geq  NC_2^{p-1}\left(\frac{p}{p-1}\right)^{p-1}
          =  \divi~\left(| \nabla v|^{p-2}\nabla v\right) \ \text{in}\ B_r(y)\cap \{u>0\}.
         \end{align*}

         By the definitions of {$u $ and $v $}, we have
         \begin{align*}
          {u =0\leq v}  \ \text{on}\ B_r(y)\cap \Gamma ^+.
         \end{align*}

 If {$u \leq v $} on $\partial B_r(y)\cap  \{u>0\}$. Then by the comparison principle (see \cite[Lemma 4.1]{tan2013orlicz}), we have
         \begin{align*}
          {u \leq v } \ \text{in}\  B_r(y)\cap  \{u>0\}.
         \end{align*}
         However, $u(y)>0=v(y)$, which is a contradiction. Therefore, there exists $y_0\in \partial B_r(y)\cap  \{u>0\}$ such that
         \begin{align*}
         u(y_0)\geq v(y_0)=C_2r^{\frac{p}{p-1}},
         \end{align*}
         {which leads to}
         \begin{align*}
         \displaystyle \sup_{\partial B_r(y)\cap  \{u>0\}} u\geq C_2 r^{\frac{p}{p-1}}.
         \end{align*}

         {For $y\in \partial  \{u>0\}$,} we  take first $y_i\in  \{u>0\}$ such that $y_i\rightarrow y$, then, proceeding as above, we can show that
         \begin{align*}
         \displaystyle \sup_{\partial B_r(y_i)\cap  \{u>0\}} u\geq C_2 r^{\frac{p}{p-1}},
         \end{align*}
         which and  the continuity of minimizers (Theorem~\ref{c1ar}) imply {the desired result}.
         \end{proof}

         {Recall that a} set $E$ is said to be porous with porosity {constant $\sigma\in(0,1)$, if there exists a constant  $R>0$} {such that for all $x \in E,$ and $r\in(0,R)$ there exists $y \in  \mathbb{R}^N$ such that  ${B_{\sigma  r}(y)}\subset B_r(x)\setminus E.$}

         It is well known that the {Hausdorff} dimension of a porous set does not exceed $N - C\sigma ^N$, where $C = C(N) > 0$ is a constant that
         depends only on $N$. Thus, the $N$-dimensional Lebesgue measure of the porous set is zero; see, {e.g., \cite{karp2000porosity,koskela1997hausdorff}}.

         The non-degeneracy and optimal growth of minimizers imply {the local porosity of the free boundary. More specifically, we state the following result, whose proof is standard (see \cite{karp2000porosity,challal2009porosity,hu2024porosity}) and hence is omitted}.
         \begin{proposition}\label{p33}
       Let $u$ be a
           minimizer of $\mathcal{J} (u)$ with  $\lambda_2>0$. Then, for every compact set $K\subset \Omega $, the intersection $K\cap \partial \{u>0\} $ is porous
         with porosity constant $\sigma$ depending only on $N$, $p$, $\lambda_1$, $\lambda_2$, $\| g\| _{{L^\infty}(\Omega )}$, {$\| g\| _{W^{1.p}(\Omega )}$, and} {the} diameter of $\Omega $.
         \end{proposition}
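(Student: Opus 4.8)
The plan is to read off the porosity of $K\cap\partial\{u>0\}$ as a direct geometric consequence of the two quantitative facts already available: the optimal growth bound \eqref{u1} of Theorem~\ref{the31} and the non-degeneracy bound of Proposition~\ref{the32}. Fix a compact set $K\subset\Omega$, set $\rho_0:=\tfrac14\operatorname{dist}(K,\partial\Omega)>0$, and apply Theorem~\ref{the31} and Proposition~\ref{the32} with their parameter $r_0$ taken equal to $\rho_0$, letting $r_1$ be the corresponding radius in Proposition~\ref{the32}; put $R:=\min\{\rho_0,r_1/2\}$. Since $x_0\in K$ forces $\operatorname{dist}(x_0,\partial\Omega)\ge 4\rho_0$, for every $x_0\in K\cap\partial\{u>0\}$ and every $r\in(0,R)$ all the balls appearing below lie compactly inside $\Omega$, the point $x_0$ belongs to $\Gamma^+$, and $u(x_0)=0$ by the continuity of $u$ (Theorem~\ref{c1ar}); moreover both estimates hold with constants $C_0$, $C_2$ depending only on the quantities listed in the statement (entering through $\rho_0$, which is controlled by $\operatorname{dist}(K,\partial\Omega)$).

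Now fix $x_0\in K\cap\partial\{u>0\}$ and $r\in(0,R)$. Applying Proposition~\ref{the32} at $x_0$ on the sphere $\partial B_{r/2}(x_0)$ yields a point $y\in\partial B_{r/2}(x_0)\cap\{u>0\}$ with
\[
u(y)\ \ge\ C_2\Big(\tfrac{r}{2}\Big)^{\frac{p}{p-1}}\ >\ 0 .
\]
Let $d:=\sup\{\varrho>0:\ B_\varrho(y)\subset\{u>0\}\}$; by continuity of $u$ we have $d>0$, and $d\le|y-x_0|=r/2$ because $x_0\notin\{u>0\}$. By maximality of $d$ there is $\tilde y\in\partial B_d(y)$ with $\tilde y\in\overline{\{u>0\}}\setminus\{u>0\}$; since $|\tilde y-x_0|\le d+r/2\le r<\rho_0\le\operatorname{dist}(x_0,\partial\Omega)$, the point $\tilde y$ lies in $\Omega$, hence $\tilde y\in\Gamma^+$, and $B_{\rho_0}(\tilde y)\Subset\Omega$. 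Applying the growth estimate \eqref{u1} of Theorem~\ref{the31} centred at $\tilde y$ (where $u(\tilde y)=0$) gives
\[
C_2\Big(\tfrac{r}{2}\Big)^{\frac{p}{p-1}}\ \le\ u(y)=\bigl|u(y)-u(\tilde y)\bigr|\ \le\ C_0\,|y-\tilde y|^{\frac{p}{p-1}}\ =\ C_0\,d^{\frac{p}{p-1}},
\]
whence $d\ge c\,r$ with $c:=\tfrac12\bigl(C_2/C_0\bigr)^{\frac{p-1}{p}}$. Consequently, with $\sigma:=\min\{c,\tfrac12\}$, the ball $B_{\sigma r}(y)$ is contained in $B_d(y)\subset\{u>0\}$ and in $B_r(x_0)$ (the latter since $|y-x_0|=r/2$ and $\sigma r\le r/2$); as the open set $\{u>0\}$ is disjoint from $\partial\{u>0\}$, we obtain $B_{\sigma r}(y)\subset B_r(x_0)\setminus\bigl(K\cap\partial\{u>0\}\bigr)$. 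This is exactly porosity with porosity constant $\sigma$, and $\sigma$ depends only on $c$, hence only on $C_0$ and $C_2$, hence on $N$, $p$, $\lambda_1$, $\lambda_2$, $\|g\|_{L^\infty(\Omega)}$, $\|g\|_{W^{1,p}(\Omega)}$, and $\operatorname{diam}\Omega$. The stated consequences ($\mathcal{H}^N$-negligibility, and in fact a bound on the Hausdorff dimension) follow at once from the classical relation between porosity and Hausdorff dimension recalled above.

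Conceptually, all the hard analysis sits in Theorem~\ref{the31} and Proposition~\ref{the32}; what remains here is purely the geometric packaging. The only delicate point is that $x_0$ is merely a topological boundary point of $\{u>0\}$ — neither an interior point of $\overline{\{u>0\}}$ nor a Lebesgue density point — which is precisely why the non-degeneracy must be used in the form of Proposition~\ref{the32}, whose conclusion is already stated for every $y\in\Gamma^+$ (its proof passing to the limit along points of $\{u>0\}$). Beyond that, the main routine obstacle is the bookkeeping that ensures the various smallness conditions on $r$ — those required by Theorem~\ref{the31} and Proposition~\ref{the32}, together with the containments $B_d(y)\subset B_r(x_0)$ and $B_{r_1}(x_0)\Subset\Omega$ — are all simultaneously satisfied by the single choice of $R$ above, so that the resulting $\sigma$ is independent of $x_0$ and of $r$.
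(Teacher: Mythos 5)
Your argument is correct and is exactly the standard porosity proof (non-degeneracy on $\partial B_{r/2}(x_0)$ producing a point $y$ of size $C_2(r/2)^{\frac{p}{p-1}}$, then the optimal growth estimate applied at the nearest free-boundary point $\tilde y$ to force $\operatorname{dist}(y,\Gamma^+)\geq cr$) that the paper itself invokes by citing \cite{karp2000porosity,challal2009porosity,hu2024porosity} and omits. One minor caveat: since the constant $C_0$ of Theorem~\ref{the31} depends on $r_0=\rho_0$, the resulting $\sigma=\min\{\tfrac12,\tfrac12(C_2/C_0)^{\frac{p-1}{p}}\}$ in fact also depends on $\operatorname{dist}(K,\partial\Omega)$, a dependence your closing sentence (and the proposition's own wording) suppresses but which is inherent to this approach and to the cited references.
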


         \section{The corresponding penalized problem when $\lambda_2>0$}\label{ssec1}

In order to investigate  Hausdorff measure of the free boundary for a minimizer when $\lambda_2>0$, we consider the {following penalized problem} over the set $\mathcal{K}$:
         \begin{equation*}
          {\mathcal{J}_\varepsilon (u)}:=\int_{\Omega }  \left(\frac{1}{p}\left(\varepsilon +|\nabla u|^2\right)^\frac{p}{2}+ \lambda_1 \left(1-(u^+)^2\right)^2+ {\lambda_2\int_{0}^{u}    \vartheta  _\varepsilon (t)\text{d}t}\right)\text{d}x\rightarrow \text{min},
           \end{equation*}
  {where} for each $\varepsilon \in(0,1)$, $\vartheta _\varepsilon: \mathbb{R} \rightarrow [0,1]$ is {a} non-decreasing Lipschitz function given by
        \begin{equation*}
         {\vartheta _\varepsilon (t):=}\left\{\begin{aligned}
           &0, &&t<0,\\
           &\frac{t}{\varepsilon },&& 0<t\leq \varepsilon,\\
           &1,&&t>\varepsilon .
           \end{aligned}\right.
         \end{equation*}

{Note} that $|\nabla u|^p\leq(\varepsilon +|\nabla u|^2)^\frac{p}{2}$ and {$\left|\int_{0}^{u}    \vartheta  _\varepsilon (t)\text{d}t\right|\leq |u|$}. {Checking the proof of Theorem~\ref{t1}, it is easy to prove the existence and uniform boundedness for  minimizers of $\mathcal{J}_\varepsilon(u)$. In the sequel,  without special statements, we always denote the minimizer of $\mathcal{J}_\varepsilon(u)$ by $u_\varepsilon$.
Moreover,  $\|u_\varepsilon\|_{{L^\infty}(\Omega)}$ depends only on $N$, $p$, $\lambda_1$, $\lambda_2$, $\| g\| _{{L^\infty}(\Omega )}$, $\| g\| _{W^{1.p}(\Omega )}$, and  the diameter of $\Omega $,}


          Now, we {prove the following result.}

         \begin{lemma}\label{almoste}
         The  minimizer $u_\varepsilon $ of  $\mathcal{J}_\varepsilon(u)$ with   $\lambda_2>0$    {satisfies}
         \begin{equation}\label{hv2}
             \left\{\begin{aligned}
             & -\divi~\left(\left(\varepsilon +|\nabla u|^2\right)^\frac{p-2}{2}\nabla u\right)+4\lambda_1 \left(-u^++(u^+)^3\right)+\lambda_2\vartheta _\varepsilon (u)=0   &&\text{in}\ \Omega  \\
             & \  u =g  && \text{on}\ \partial \Omega
             \end{aligned}\right.
             \end{equation}
           {for almost every $x\in \Omega$.}
         \end{lemma}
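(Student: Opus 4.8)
The plan is to derive the Euler--Lagrange equation \eqref{hv2} directly from the minimality of $u_\varepsilon$ by computing the first variation of $\mathcal{J}_\varepsilon$. First I would fix an arbitrary test function $\varphi\in C_0^\infty(\Omega)$ (or more generally $\varphi\in W_0^{1,p}(\Omega)$) and consider the scalar function $t\mapsto \mathcal{J}_\varepsilon(u_\varepsilon+t\varphi)$. Since $u_\varepsilon+t\varphi\in\mathcal{K}$ for every $t\in\mathbb{R}$, minimality gives that this function attains a minimum at $t=0$, so its derivative there vanishes. The key point is that, unlike the original functional $\mathcal{J}$, the penalized integrand is differentiable in $u$: the term $(\varepsilon+|\nabla u|^2)^{p/2}$ is smooth in $\nabla u$ with derivative $p(\varepsilon+|\nabla u|^2)^{(p-2)/2}\nabla u$, the term $\lambda_1(1-(u^+)^2)^2$ has derivative $-4\lambda_1 u^+(1-(u^+)^2)=4\lambda_1(-u^++(u^+)^3)$ with respect to $u$ (using that $s\mapsto (s^+)^2$ is $C^1$ with derivative $2s^+$), and $\int_0^u\vartheta_\varepsilon(t)\,dt$ has derivative $\vartheta_\varepsilon(u)$ since $\vartheta_\varepsilon$ is continuous.

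The main technical step is justifying differentiation under the integral sign, i.e.\ that
\begin{align*}
\frac{d}{dt}\Big|_{t=0}\mathcal{J}_\varepsilon(u_\varepsilon+t\varphi)
=\int_\Omega\Big(&\left(\varepsilon+|\nabla u_\varepsilon|^2\right)^{\frac{p-2}{2}}\nabla u_\varepsilon\cdot\nabla\varphi\\
&+4\lambda_1\left(-u_\varepsilon^++(u_\varepsilon^+)^3\right)\varphi+\lambda_2\vartheta_\varepsilon(u_\varepsilon)\varphi\Big)\,dx.
\end{align*}
For this I would produce an integrable dominating function for the difference quotients, valid for $|t|\le 1$, using the uniform $L^\infty$ and $W^{1,p}$ bounds on $u_\varepsilon$ from Theorem~\ref{t1} (which the excerpt notes apply to $u_\varepsilon$), the boundedness of $\varphi$ and $\nabla\varphi$, and elementary estimates such as $(\varepsilon+|\xi|^2)^{(p-2)/2}|\xi|\le C(1+|\xi|^{p-1})$ and $|\vartheta_\varepsilon|\le 1$; then the dominated convergence theorem applies. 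One convenient shortcut is convexity: $t\mapsto\int_\Omega\frac1p(\varepsilon+|\nabla u_\varepsilon+t\nabla\varphi|^2)^{p/2}\,dx$ is convex and finite, hence its one-sided derivatives exist and the chain rule is legitimate, while the lower-order terms are handled by dominated convergence separately.

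Having obtained $\int_\Omega\big((\varepsilon+|\nabla u_\varepsilon|^2)^{(p-2)/2}\nabla u_\varepsilon\cdot\nabla\varphi+4\lambda_1(-u_\varepsilon^++(u_\varepsilon^+)^3)\varphi+\lambda_2\vartheta_\varepsilon(u_\varepsilon)\varphi\big)\,dx=0$ for all $\varphi\in C_0^\infty(\Omega)$, this is precisely the weak form of \eqref{hv2}; since $\mathcal{K}$ enforces $u_\varepsilon-g\in W_0^{1,p}(\Omega)$, the boundary condition $u_\varepsilon=g$ on $\partial\Omega$ holds in the trace sense. To pass to the stated pointwise (``almost every $x$'') formulation one invokes Lemma~\ref{f1l}: the function $f:=4\lambda_1(-u_\varepsilon^++(u_\varepsilon^+)^3)+\lambda_2\vartheta_\varepsilon(u_\varepsilon)+\divi\big((\varepsilon+|\nabla u_\varepsilon|^2)^{(p-2)/2}\nabla u_\varepsilon\big)$ lies in $L^1_{\mathrm{loc}}(\Omega)$ (here one uses $C^{1,\alpha}_{\mathrm{loc}}$ regularity of $u_\varepsilon$, obtained exactly as in Theorem~\ref{c1ar}, so that the divergence term is a genuine locally integrable function), and it integrates to zero against every test function, hence vanishes a.e. The main obstacle I anticipate is not conceptual but bookkeeping: carefully verifying that the divergence term makes sense as an $L^1_{\mathrm{loc}}$ function (rather than only distributionally) so that Lemma~\ref{f1l} can be applied to get the a.e.\ statement; the degenerate/singular behavior of $|\nabla u_\varepsilon|^{p-2}$ for $p<2$ is mitigated here by the $\varepsilon$-regularization, which keeps $(\varepsilon+|\nabla u_\varepsilon|^2)^{(p-2)/2}$ bounded, so in fact this obstacle is mild and the argument is essentially routine.
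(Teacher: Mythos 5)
There is a genuine gap in the second half of your argument. The first variation computation is fine (and the paper indeed takes the weak-solution property of $u_\varepsilon$ for granted), but your passage from the weak formulation to the a.e.\ statement rests on the claim that local $C^{1,\alpha}$ regularity of $u_\varepsilon$ makes $\divi\bigl(\left(\varepsilon+|\nabla u_\varepsilon|^2\right)^{\frac{p-2}{2}}\nabla u_\varepsilon\bigr)$ a genuine $L^1_{\mathrm{loc}}$ function. That is not true: $C^{1,\alpha}$ regularity only gives that the flux $V:=\left(\varepsilon+|\nabla u_\varepsilon|^2\right)^{\frac{p-2}{2}}\nabla u_\varepsilon$ is a (H\"older continuous) bounded vector field, and the divergence of a merely H\"older continuous vector field is in general only a distribution. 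To apply Lemma~\ref{f1l} pointwise one needs $V$ to be weakly differentiable with $\partial_{x_i}V\in L^1_{\mathrm{loc}}(\Omega)$, i.e.\ some form of second-order (weighted $W^{2,2}_{\mathrm{loc}}$) regularity of $u_\varepsilon$ — and the $\varepsilon$-regularization, while it keeps the coefficient $\left(\varepsilon+|\nabla u_\varepsilon|^2\right)^{\frac{p-2}{2}}$ bounded for $p<2$, does not by itself produce any control on $D^2u_\varepsilon$. The obstacle you dismiss as ``mild and routine'' is precisely the content of the paper's proof.

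Concretely, the paper proves the missing ingredient by differentiating the equation in $x_i$ and testing with $\Psi=u_{\varepsilon x_i}\phi^2$ when $p\in[2,+\infty)$, and with $\Psi=\left(\varepsilon+u_{\varepsilon x_i}^2\right)^{\frac{p-2}{2}}u_{\varepsilon x_i}\phi^2$ when $p\in\left(\frac{2N}{N+2},2\right)$, obtaining Caccioppoli-type estimates of the form
\begin{align*}
\int_{B_{r/2}}\left(\left(\varepsilon+|\nabla u_\varepsilon|^2\right)^{\frac{p-2}{2}}\left|D^2u_\varepsilon\right|\right)^2\mathrm{d}x\leq Cr^{N-2},
\end{align*}
with $C$ independent of $\varepsilon$; combined with the pointwise bound $\left|\partial_{x_i}V\right|\leq (p+1)\left(\varepsilon+|\nabla u_\varepsilon|^2\right)^{\frac{p-2}{2}}\left|D^2u_\varepsilon\right|$ this yields $\partial_{x_i}V\in L^1_{\mathrm{loc}}(\Omega)$, and only then Lemma~\ref{f1l} gives the a.e.\ identity. (Note also that these quantitative Hessian estimates are reused later, in the proof of Lemma~\ref{LL22}, so they are not a dispensable technicality; an argument that merely observes ``the distributional divergence equals the bounded function on the right-hand side'' would render the lemma vacuous and would not support the subsequent differentiation of \eqref{hv2} in $x_i$.) To repair your proof you would need to supply this second-derivative estimate, e.g.\ by the difference-quotient/test-function argument the paper carries out, treating the cases $p\geq 2$ and $p<2$ separately.
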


         \begin{proof}
         Let {$u_\varepsilon$} be a minimizer of {$\mathcal{J}_\varepsilon(u)$}  with  $\lambda_2>0$. {It is clear that $u_\varepsilon $ is a weak solution to \eqref{hv2}}.
        {Moreover,    $u_\varepsilon$  is {locally $C^{1,\alpha }$}-continuous with some $\alpha\in( 0,1)$; see \cite{tolksdorf1984regularity}}. Furthermore, {for any $\Omega'\Subset \Omega$},    {$\|u_\varepsilon\|_{C^{1,\alpha }(\Omega')}$} depends only on  $N$, $p$, $\lambda_1$, $\lambda_2$, $\| g\| _{{L^\infty}(\Omega )}$, $\| g\| _{W^{1.p}(\Omega )}$, {$\Omega'$,} the diameter of $\Omega $,  and $\|u_\varepsilon\|_{L^\infty(\Omega)}$, while  $\|u_\varepsilon\|_{{L^\infty}(\Omega)}$
         depends only on $N$, $p$, $\lambda_1$, $\lambda_2$, $\| g\| _{{L^\infty}(\Omega )}$, $\| g\| _{W^{1.p}(\Omega )}$, and  the diameter of $\Omega $,
           Therefore, $\|u_\varepsilon\|_{C^{1,\alpha }(\Omega')}$ is independent of $\varepsilon$.
                 In particular,  there exists a positive constant   $c_1$  depending only on $N$, $p$, $\lambda_1$, $\lambda_2$, $\| g\| _{{L^\infty}(\Omega )}$, $\| g\| _{W^{1.p}(\Omega )}$, {$\Omega'$, and} the diameter of $\Omega $  such that
                  \begin{align}\label{uec1}
                       \|   u_\varepsilon\|_{L^\infty(\Omega')}+\| \nabla u_\varepsilon\|_{L^\infty(\Omega')} \leq c_1.
                  \end{align}

 {In view of   Lemma~\ref{f1l},  it} suffices to show that
             $\left(\left(\varepsilon +|\nabla u_\varepsilon|^2\right)^\frac{p-2}{2}\nabla u_\varepsilon \right)_{x_i}\in L^1_\text{loc}(\Omega )$.
         Indeed, it holds that
             \begin{align}
          \left(\left(\varepsilon +|\nabla u_\varepsilon|^2\right)^\frac{p-2}{2}\nabla u_\varepsilon \right)_{x_i}
         =& (p-2)\left(\varepsilon +|\nabla u_\varepsilon|^2\right)^\frac{p-4}{2}\nabla u_\varepsilon \nabla u_{\varepsilon x_i} \nabla u_\varepsilon +\left(\varepsilon +|\nabla u_\varepsilon|^2\right)^\frac{p-2}{2}\nabla u_{\varepsilon x_i} \notag\\
         \leq&  p\left(\varepsilon +|\nabla u_\varepsilon|^2\right)^\frac{p-4}{2}\nabla u_\varepsilon \nabla u_{\varepsilon x_i} \nabla u_\varepsilon  +\left(\varepsilon +|\nabla u_\varepsilon|^2\right)^\frac{p-2}{2}\nabla u_{\varepsilon x_i}  \notag\\
         \leq&  {p\left(\varepsilon +|\nabla u_\varepsilon|^2\right)^\frac{p-2}{2} \left| \nabla u_{\varepsilon x_i}\right|  +\left(\varepsilon +|\nabla u_\varepsilon|^2\right)^\frac{p-2}{2}\left|\nabla u_{\varepsilon x_i}\right| } \notag\\
         \leq&   (p+1)\left(\varepsilon +|\nabla u_\varepsilon|^2\right)^\frac{p-2}{2}\left|D^2 u_{\varepsilon }\right|.\label{A}
         \end{align}

         In the following, we consider two  {cases: $p\in[2,+\infty)$} and $p\in\left(\frac{2N}{N+2},2\right)$.

         {\textbf{Case 1: $p\in[2,+\infty)$.}} Define $\Psi :=u_{\varepsilon x_i}\phi  ^2$ {with} $\phi  \in C_0^\infty (B_{{3r}/{5}})$ {satisfying}
         \begin{equation*}
           \left\{\begin{aligned}
           & 0\leq \phi  \leq 1,&&\text{in}\  B_{{3r}/{5}},\\
           & \phi  =1,&& \text{in}\  B_{{r}/{2}},\\
           &| \nabla \phi  |\leq \frac{4}{r},&&\text{in}\  B_{{3r}/{5}},
           \end{aligned}\right.
           \end{equation*}
        {where $B_r\Subset \Omega $ is a ball with  some fixed radius $r>0$}.

        {Differentiating  \eqref{hv2} with respect to $x_i$, then, multiplying it by $\Psi$ and  integrating over} $B_{{3r}/{5}}$, we have
         \begin{align}\label{aes1}
           I_1:=&\int_{B_{{3r}/{5}}}   \left(\left(\varepsilon +|\nabla u_\varepsilon|^2\right)^\frac{p-2}{2}\nabla u_\varepsilon \right)_{x_i}\nabla \Psi  \text{d}x\notag\\
           =&-\int_{B_{{3r}/{5}}}  \left(4\lambda_1 \left(-u_\varepsilon^+ +\left(u_\varepsilon^+\right) ^3\right)+\lambda_2\vartheta _\varepsilon (u_\varepsilon )\right)_{x_i}\Psi  \text{d}x\notag\\
           =&-{4\lambda_1} \int_{B_{{3r}/{5}}}  \left(-u_\varepsilon^+ +\left(u_\varepsilon ^+\right)^3\right)_{x_i}{\Psi} \text{d}x -{\lambda_2}\int_{B_{{3r}/{5}}}  \left(\vartheta _\varepsilon (u_\varepsilon )\right)_{x_i}\Psi \text{d}x\notag\\
           =&:I_2+I_3.
         \end{align}

          By the definition of $\Psi$, $I_1$ becomes
         \begin{align*}
           I_1=&\int_{B_{{3r}/{5}}}   \left(\left(\varepsilon +|\nabla u_\varepsilon|^2\right)^\frac{p-2}{2}\nabla u_\varepsilon \right)_{x_i}\nabla \Psi  \text{d}x\notag\\
           =&\int_{B_{{3r}/{5}}}   \left((p-2)\left(\varepsilon +|\nabla u_\varepsilon|^2\right)^\frac{p-4}{2}\nabla u_\varepsilon \nabla u_{\varepsilon x_i} \nabla u_\varepsilon +\left(\varepsilon +|\nabla u_\varepsilon|^2\right)^\frac{p-2}{2}\nabla u_{\varepsilon x_i} \right)\nabla \Psi \text{d}x\notag\\
            =&   \int_{B_{{3r}/{5}}}       \left((p-2)\left(\varepsilon +|\nabla u_\varepsilon|^2\right)^\frac{p-4}{2}\nabla u_\varepsilon \nabla u_{\varepsilon x_i} \nabla u_\varepsilon +\left(\varepsilon +|\nabla u_\varepsilon|^2\right)^\frac{p-2}{2}\nabla u_{\varepsilon x_i}   \right)  \nabla u_{\varepsilon x_i}\phi  ^2 \text{d}x\notag\\
            &+\int_{B_{{3r}/{5}}}   2\left((p-2)\left(\varepsilon +|\nabla u_\varepsilon|^2\right)^\frac{p-4}{2}\nabla u_\varepsilon \nabla u_{\varepsilon x_i} \nabla u_\varepsilon +\left(\varepsilon +|\nabla u_\varepsilon|^2\right)^\frac{p-2}{2}\nabla u_{\varepsilon x_i} \right) u_{\varepsilon x_i}\phi  \nabla \phi  \text{d}x\notag\\
            =& \int_{B_{{3r}/{5}}}   \left((p-2)\left(\varepsilon +|\nabla u_\varepsilon|^2\right)^\frac{p-4}{2}|\nabla u_\varepsilon \nabla u_{\varepsilon x_i}|^2 +\left(\varepsilon +|\nabla u_\varepsilon|^2\right)^\frac{p-2}{2}\left|\nabla u_{\varepsilon x_i}\right|^2 \right)\phi  ^2 \text{d}x\notag\\
            &+  \int_{B_{{3r}/{5}}}       2\left((p-2)\left(\varepsilon +|\nabla u_\varepsilon|^2\right)^\frac{p-4}{2}  \nabla u_\varepsilon \nabla u_{\varepsilon x_i} \nabla u_\varepsilon   +  \left(\varepsilon +|\nabla u_\varepsilon|^2\right)^\frac{p-2}{2}\nabla u_{\varepsilon x_i} \right)   u_{\varepsilon x_i}\phi  \nabla \phi  \text{d}x\notag\\
             =&:I_{1}^1+I_{1}^2.
         \end{align*}

Since $p\in[2,+\infty)$, we get
         \begin{align}\label{l122a}
           I_1^1\geq& \int_{B_{{3r}/{5}}}   \left(\varepsilon +|\nabla u_\varepsilon|^2\right)^\frac{p-2}{2}\left|\nabla u_{\varepsilon x_i}\right|^2 \phi ^2  \text{d}x.
         \end{align}

In view of the definition of $\phi $, we deduce by the Young's inequality and {$\| \nabla u_\varepsilon\|_{L^\infty(B_r )} \leq c_1$} (see \eqref{uec1}) that
         \begin{align}\label{l222a}
            |I_1^2| \leq &   \int_{B_{{3r}/{5}}}     \bigg|2\left((p-2)\left(\varepsilon +|\nabla u_\varepsilon|^2\right)^\frac{p-4}{2}\nabla u_\varepsilon \nabla u_{\varepsilon x_i} \nabla u_\varepsilon   +  \left(\varepsilon +|\nabla u_\varepsilon|^2\right)^\frac{p-2}{2}  \nabla u_{\varepsilon x_i}   \right) u_{\varepsilon x_i}\phi  \nabla \phi \bigg| \text{d}x\notag\\
           \leq&\int_{B_{{3r}/{5}}}   2(p-1)\left(\varepsilon +|\nabla u_\varepsilon|^2\right)^\frac{p-2}{2} \left|\nabla u_{\varepsilon x_i}\right| \cdot  \left| u_{\varepsilon x_i}\right| \cdot  | \nabla \phi |\cdot \phi \text{d}x\notag\\
           \leq& \frac{8}{r}\int_{B_{{3r}/{5}}}  (p-1)\left(\varepsilon +|\nabla u_\varepsilon|^2\right)^\frac{p-2}{2}\left|\nabla u_{\varepsilon x_i}\right|\cdot \left| u_{\varepsilon x_i}\right| \phi  \text{d}x\notag\\
           \leq &\frac{1}{2}\int_{B_{{3r}/{5}}}     \left(\varepsilon +|\nabla u_\varepsilon|^2\right)^\frac{p-2}{2} \left|\nabla u_{\varepsilon x_i}\right|^2  \phi  ^2\text{d}x+\frac{32(p-1)^2}{r^2}\int_{B_{{3r}/{5}}}  \left(\varepsilon +|\nabla u_\varepsilon|^2\right)^\frac{p-2}{2} \left|u_{\varepsilon x_i}\right|^2 \text{d}x\notag\\
           \leq &{\frac{1}{2}\int_{B_{{3r}/{5}}}     \left(\varepsilon +|\nabla u_\varepsilon|^2\right)^\frac{p-2}{2} \left|\nabla u_{\varepsilon x_i}\right|^2  \phi  ^2\text{d}x+\frac{32(p-1)^2}{r^2}\int_{B_{{3r}/{5}}}  \left(\varepsilon +|\nabla u_\varepsilon|^2\right)^\frac{p-2}{2} \left|\nabla u_{\varepsilon}\right|^2 \text{d}x}\notag\\
           \leq &\frac{1}{2}\int_{B_{{3r}/{5}}}   \left(\varepsilon +|\nabla u_\varepsilon|^2\right)^\frac{p-2}{2} \left|\nabla u_{\varepsilon x_i}\right|^2  \phi  ^2\text{d}x+\frac{32(p-1)^2}{r^2} \left(1+c_1^2\right)^\frac{p-2}{2}c^2_1 C(N)r^N \notag\\
           \leq &{\frac{1}{2}\int_{B_{{3r}/{5}}}   \left(\varepsilon +|\nabla u_\varepsilon|^2\right)^\frac{p-2}{2} \left|\nabla u_{\varepsilon x_i}\right|^2  \phi  ^2\text{d}x+C(c_1,N,p)r^{N-2}}.
         \end{align}

{For $I_2$, we deduce that}
          \begin{align}\label{r122a}
           {I_2=}&-{4\lambda_1}\int_{B_{{3r}/{5}}}   \left(-u_\varepsilon^+ +\left(u_\varepsilon^+\right)^3\right)_{x_i}\Psi\text{d}x \notag\\
           =& -{4\lambda_1}\int_{B_{{3r}/{5}}}   \left(- \chi_{\{u_{\varepsilon}>0\}} +3\left(u_\varepsilon^+\right)^2 \right)u_{\varepsilon x_i}^2 \phi ^2\text{d}x \notag\\
           \leq &{4\lambda_1}\int_{B_{{3r}/{5}}}  u_{\varepsilon x_i}^2\phi ^2\text{d}x \notag\\
           \leq &{4\lambda_1}\int_{B_{{3r}/{5}}}   |\nabla u_{\varepsilon }| ^2\text{d}x\notag\\
           \leq &  {C(c_1,N,\lambda _1)r^N},
         \end{align}
         {where $\chi _{\{\cdot\}}$ is the standard characteristic function.}

 Since $\vartheta_\varepsilon  ({t})$ is non-decreasing in $t$ and    $\lambda_2>0$, it holds that
         \begin{align}\label{r222a}
           I_3 = -{\lambda_2}\int_{B_{{3r}/{5}}}  \left(\vartheta _\varepsilon (u_\varepsilon )\right)_{x_i}\Psi\text{d}x
               = -{\lambda_2}\int_{B_{{3r}/{5}}} \vartheta '_\varepsilon (u_\varepsilon )u_{\varepsilon x_i}^2\phi  ^2\text{d}x
               \leq  0.
         \end{align}

 By virtue of \eqref{aes1}, \eqref{l122a}, \eqref{l222a}, {\eqref{r122a}, and} \eqref{r222a}, we have
       {\begin{align}\label{lr22a}
          \int_{B_{{r}/{2}}}  \left(\varepsilon +|\nabla u_\varepsilon|^2\right)^\frac{p-2}{2} |\nabla u_{\varepsilon x_i}|^2  \text{d}x
          \leq &\int_{B_{{3r}/{5}}} \left(\varepsilon +|\nabla u_\varepsilon|^2\right)^\frac{p-2}{2} |\nabla u_{\varepsilon x_i}|^2 \phi^2 \text{d}x\notag\\
          \leq &C(c_1,N,\lambda _1)r^N+C(c_1,N,p)r^{N-2}\notag\\
          \leq &C(c_1,N,p,\lambda _1,\Omega )r^{N-2}.
         \end{align}}

         {Summing up \eqref{lr22a} from $i=1$ to $N$, it follows that
         \begin{align}\label{2jdgj}
         \int_{B_{{r}/{2}}}   \left(\varepsilon +|\nabla u_\varepsilon|^2\right)^\frac{p-2}{2} |D^2 u_{\varepsilon}|^2  \text{d}x
          \leq C_3 r^{N-2},
         \end{align}}
         where $C_3$ depends only on $c_1$, $N$, $p$, {$\lambda_1$, and} the diameter of $\Omega $.

         {\textbf{Case 2: $p\in\left(\frac{2N}{N+2},2\right)$}.} Let $\Psi =\left(\varepsilon +{u^2_{\varepsilon x_i}}\right)^\frac{p-2}{2}u_{\varepsilon x_i}\phi  ^2$ {with} $\phi  \in C_0^\infty (B_{{3r}/{5}})$ {satisfying}
         \begin{equation*}
         \left\{\begin{aligned}
         & 0\leq \phi  \leq 1,&& \text{in}\  B_{{3r}/{5}},\\
         & \phi  =1,&&\text{in}\ B_{{r}/{2}},\\
         &| \nabla \phi  |\leq \frac{4}{r},&& \text{in}\  B_{{3r}/{5}},
         \end{aligned}\right.
         \end{equation*}
      {where $B_r\Subset \Omega $ is a ball with  some fixed radius $r>0$}.	

      Differentiating \eqref{hv2} with respect to $x_i$, then, multiplying it by $\Psi$ and  integrating over $B_{{3r}/{5}}$, we have
         \begin{align}\label{almost}
           I_1:=&\int_{B_{{3r}/{5}}}   \left(\left(\varepsilon +|\nabla u_\varepsilon|^2\right)^\frac{p-2}{2}\nabla u_\varepsilon \right)_{x_i}\nabla \Psi  \text{d}x\notag\\
           =&-\int_{B_{{3r}/{5}}}  \left(4\lambda_1 \left(-u_\varepsilon^+ +\left(u_\varepsilon^+\right) ^3\right)+\lambda_2\vartheta _\varepsilon (u_\varepsilon )\right)_{x_i}\Psi  \text{d}x\notag\\
           =&-{4\lambda_1} \int_{B_{{3r}/{5}}}  \left(-u_\varepsilon^+ +\left(u_\varepsilon ^+\right)^3\right)_{x_i}{\Psi} \text{d}x -{\lambda_2}\int_{B_{{3r}/{5}}}  \left(\vartheta _\varepsilon (u_\varepsilon )\right)_{x_i}\Psi \text{d}x\notag\\
           =&:I_2+I_3.
         \end{align}

  The left-hand side of \eqref{almost} becomes
         \begin{align*}
         I_1=&\int_{B_{{3r}/{5}}}   \left(\left(\varepsilon +|\nabla u_\varepsilon| ^2\right)^\frac{p-2}{2}\nabla u_\varepsilon \right)_{x_i}\nabla \Psi\text{d}x\notag\\
         =&\int_{B_{{3r}/{5}}}   \left((p-2)\left(\varepsilon +|\nabla u_\varepsilon| ^2\right)^\frac{p-4}{2}\nabla u_\varepsilon\nabla u_{\varepsilon x_i}\nabla u_{\varepsilon} +\left(\varepsilon +|\nabla u_\varepsilon| ^2\right)^\frac{p-2}{2} \nabla u_{\varepsilon x_i}\right)\nabla\Psi\text{d}x\notag\\
         =&\int_{B_{{3r}/{5}}}   \left((p-2)\left(\varepsilon +|\nabla u_\varepsilon| ^2\right)^\frac{p-4}{2}\left|\nabla u_\varepsilon \nabla u_{\varepsilon x_i}\right|^2 +\left(\varepsilon +|\nabla u_\varepsilon| ^2\right)^\frac{p-2}{2} \left|\nabla u_{\varepsilon x_i}\right|^2\right)\notag\\
          &\qquad\times \left((p-2)\left(\varepsilon +u_{\varepsilon x_i}^2 \right)^\frac{p-4}{2}u_{\varepsilon x_i}^2+\left(\varepsilon +u_{\varepsilon x_i}^2 \right)^\frac{p-2}{2}\right)\phi  ^2 \text{d}x\notag\\
         &+\int_{B_{{3r}/{5}}}   2\left((p-2)\left(\varepsilon +|\nabla u_\varepsilon| ^2\right)^\frac{p-4}{2}\nabla u_\varepsilon\nabla u_{\varepsilon x_i}\nabla u_{\varepsilon} +\left(\varepsilon +|\nabla u_\varepsilon| ^2\right)^\frac{p-2}{2}\nabla u_{\varepsilon x_i} \right) \left(\varepsilon +u_{\varepsilon x_i}^2\right)^\frac{p-2}{2}u_{\varepsilon x_i}\phi  \nabla \phi  \text{d}x\notag\\
         =:&I_1^1+I_1^2.
         \end{align*}

  Since $p\in\left(\frac{2N}{N+2},2\right)$, we get
         \begin{align}\label{aes2}
         I_1^1
         \geq&(p-1)^2\int_{B_{{3r}/{5}}} \left(\varepsilon +|\nabla u_\varepsilon| ^2\right)^\frac{p-2}{2}\left(\varepsilon + u_{\varepsilon x_i}^2\right)^\frac{p-2}{2}\left|\nabla u_{\varepsilon x_i}\right|^2\phi  ^2\text{d}x.
         \end{align}

 Using the fact that $|p-2|<p$, the Young's inequality, and {$\| \nabla u_\varepsilon\|_{L^\infty(B_r )} \leq c_1$} (see \eqref{uec1}), we have
         \begin{align}\label{l2224a}
         |I_1^2|
         \leq&\int_{B_{{3r}/{5}}}   \bigg|2 \left((p-2)\left(\varepsilon +|\nabla u_\varepsilon| ^2\right)^\frac{p-4}{2}\nabla u_\varepsilon \nabla u_{\varepsilon x_i}\nabla u_{\varepsilon }+\left(\varepsilon +|\nabla u_\varepsilon| ^2\right)^\frac{p-2}{2}\nabla u_{\varepsilon x_i}\right) \left(\varepsilon +u_{\varepsilon x_i}^2\right)^\frac{p-2}{2} u_{\varepsilon x_i}  \phi  \nabla \phi  \bigg|  \text{d}x\notag\\
         \leq &\frac{8}{r}\int_{B_{{3r}/{5}}}    (p+1)\left(\varepsilon +|\nabla u_\varepsilon| ^2\right)^\frac{p-2}{2} \left|\nabla u_{\varepsilon x_i}\right|\cdot \left(\varepsilon +u_{\varepsilon x_i}^2\right)^\frac{p-2}{2} \left|u_{\varepsilon x_i} \right| \phi    \text{d}x\notag\\
         \leq &\frac{(p-1)^2}{2}\int_{B_{{3r}/{5}}}   \left(\varepsilon +|\nabla u_\varepsilon| ^2\right)^\frac{p-2}{2}  \left|\nabla u_{\varepsilon x_i}\right|^2  \left(\varepsilon +u_{\varepsilon x_i}^2\right)^\frac{p-2}{2}\phi  ^2\text{d}x  \notag\\
         &+\frac{32(p+1)^2}{r^2(p-1)^2}\int_{B_{{3r}/{5}}}   \left(\varepsilon +\nabla u_{\varepsilon }^2\right)^\frac{p-2}{2}  \left(\varepsilon +u_{\varepsilon x_i}^2\right)^\frac{p-2}{2}\left|u_{\varepsilon x_i}\right|^2 \text{d}x\notag\\
         \leq &\frac{(p-1)^2}{2}\int_{B_{{3r}/{5}}}   \left(\varepsilon +|\nabla u_\varepsilon| ^2\right)^\frac{p-2}{2}  \left|\nabla u_{\varepsilon x_i}\right|^2  \left(\varepsilon +u_{\varepsilon x_i}^2\right)^\frac{p-2}{2}\phi  ^2\text{d}x+\frac{32(p+1)^2}{r^2(p-1)^2}\int_{B_{{3r}/{5}}}   |\nabla u_{\varepsilon }|^{p-2} \left|u_{\varepsilon x_i}\right|^p \text{d}x\notag\\
         \leq &\frac{(p-1)^2}{2}\int_{B_{{3r}/{5}}}   \left(\varepsilon +|\nabla u_\varepsilon| ^2\right)^\frac{p-2}{2}|\nabla u_{\varepsilon x_i}|^2  \left(\varepsilon +u_{\varepsilon x_i}^2\right)^\frac{p-2}{2}\phi  ^2\text{d}x+\frac{32(p+1)^2}{r^2(p-1)^2}\int_{B_{{3r}/{5}}}   |\nabla u_\varepsilon |^{2(p-1)}  \text{d}x\notag\\
        \leq & \frac{(p-1)^2}{2} \int_{B_{{3r}/{5}}}  \left(\varepsilon +|\nabla u_\varepsilon| ^2\right)^\frac{p-2}{2} |\nabla u_{\varepsilon x_i}|^2  \left(\varepsilon +u_{\varepsilon x_i}^2\right)^\frac{p-2}{2}\phi  ^2\text{d}x
        +C(c_1,N,p)r^{N-2} .
         \end{align}

 By the definition of $\phi  $, $I_2$ becomes
         \begin{align}\label{r1224a}
         I_2=&-{4\lambda_1}\int_{B_{{3r}/{5}}}  \left(-u_\varepsilon^+ +\left(u_\varepsilon^+\right)^3\right)_{x_i}\Psi \text{d}x \notag\\
         =&-{4\lambda_1}\int_{B_{{3r}/{5}}}  {\left(- \chi_{\{u_{\varepsilon}>0\}} +3\left(u_\varepsilon^+\right)^2 \right) \left(\varepsilon +u_{\varepsilon x_i}^2\right)^\frac{p-2}{2}u_{\varepsilon x_i}^2}\phi ^2\text{d}x \notag\\
         \leq&  {{4\lambda_1}\int_{B_{{3r}/{5}}}     \left(\varepsilon +u_{\varepsilon x_i}^2\right)^\frac{p-2}{2} u_{\varepsilon x_i}  ^2\phi ^2\text{d}x }\notag\\
         \leq &{4\lambda_1}\int_{B_{{3r}/{5}}}   \left|u_{\varepsilon x_i}\right|^p\phi ^2\text{d}x \notag\\
         \leq &{4\lambda_1}\int_{B_{{3r}/{5}}} |\nabla u_{\varepsilon }| ^p\text{d}x\notag\\
         \leq & {C(c_1,N,p,\lambda _1)r^N}.
         \end{align}

 Since $\vartheta_\varepsilon  ({t})$ is non-decreasing in $t$, it follows that
         \begin{align}\label{r2224a}
         I_3= &-{\lambda_2}\int_{B_{{3r}/{5}}}  \left( \vartheta _\varepsilon (u_\varepsilon )\right)_{x_i}\Psi  \text{d}x
         =  -{\lambda_2}\int_{B_{{3r}/{5}}}  \vartheta '_\varepsilon (u_\varepsilon )\left(\varepsilon + u_{\varepsilon x_i}^2\right)^\frac{p-2}{2}u_{\varepsilon x_i}^2\phi  ^2\text{d}x
         \leq    0.
         \end{align}

  By virtue of \eqref{almost}, \eqref{aes2}, \eqref{l2224a}, {\eqref{r1224a}, and} \eqref{r2224a}, it follows that
         \begin{align}\label{p12a}
          \int_{B_{{r}/{2}}}   \left(\varepsilon +\left|\nabla u_{\varepsilon }\right|^2\right)^{p-2}\left|\nabla u_{\varepsilon x_i}\right|^2 \text{d}x
              \leq&\int_{B_{{3r}/{5}}}   \left(\varepsilon +|\nabla u_{\varepsilon }|^2\right)^\frac{p-2}{2}|\nabla u_{\varepsilon x_i}|^2  \left(\varepsilon +u_{\varepsilon x_i}^2\right)^\frac{p-2}{2}\phi  ^2\text{d}x\notag\\
         \leq& C(c_1,N,p)r^{N-2}+C(c_1,N,p,\lambda _1)r^N\notag\\
         \leq& C(c_1,N,p,\lambda _1,\Omega )r^{N-2}.
         \end{align}

 Summing up \eqref{p12a} from $i=1$ to $N$, we have
         \begin{align}\label{lr224a}
         \int_{B_{{r}/{2}}}   \left(\left(\varepsilon +{|\nabla u_\varepsilon|^2} \right)^\frac{p-2}{2}|D^2 u_{\varepsilon}|\right)^2  \text{d}x
             \leq   C_4r^{N-2},
           \end{align}
         where $C_4$ depends only on $c_1$, $N$, $p$, {$\lambda_1$, and} the diameter of $\Omega $.

         {Finally}, {we infer from \eqref{2jdgj} and \eqref{lr224a} that} $\left(\varepsilon +|\nabla u_\varepsilon| ^2\right)^\frac{p-2}{2}|D^2u_\varepsilon |\in L^2_\text{loc}(\Omega )$,
         {which along with \eqref{A}, implies}
        \begin{align*}\left(\left(\varepsilon +|\nabla u_\varepsilon| ^2\right)^\frac{p-2}{2}\nabla u_\varepsilon \right)_{x_i}\in L^1_\text{loc}(\Omega ). \end{align*}
         \end{proof}

         {The following lemma  indicates the non-degeneracy of second-order term.}

         \begin{lemma}\label{erjie}
        The  minimizer $u_\varepsilon $ of {$\mathcal{J}_\varepsilon(u)$} with   $\lambda_2>0$  {satisfies}
         \begin{align*}
             \frac{\left(4\lambda_1 \left(-u_\varepsilon^++\left(u_\varepsilon ^+\right)^3\right)+\lambda_2\vartheta _\varepsilon (u_\varepsilon )\right)^2}{(p+1)^2}\leq \left(\left(\varepsilon +|\nabla u_\varepsilon| ^2\right)^\frac{p-2}{2} |D^2 u_\varepsilon |\right)^2 \ {\text{a.e.\ in}}\ \Omega.
         \end{align*}
         \end{lemma}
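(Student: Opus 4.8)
The plan is to read the bound off directly from the almost-everywhere Euler--Lagrange equation for $u_\varepsilon$ together with the pointwise estimate already established in the proof of Lemma~\ref{almoste}. By Lemma~\ref{almoste}, $u_\varepsilon$ is a strong solution of \eqref{hv2}, i.e.
\[
\divi\left(\left(\varepsilon+|\nabla u_\varepsilon|^2\right)^{\frac{p-2}{2}}\nabla u_\varepsilon\right)=4\lambda_1\left(-u_\varepsilon^++\left(u_\varepsilon^+\right)^3\right)+\lambda_2\vartheta_\varepsilon(u_\varepsilon)\quad\text{for a.e. }x\in\Omega;
\]
moreover, the same lemma shows $\left(\varepsilon+|\nabla u_\varepsilon|^2\right)^{\frac{p-2}{2}}|D^2u_\varepsilon|\in L^2_{\mathrm{loc}}(\Omega)$, so that $D^2u_\varepsilon$ exists almost everywhere and the chain of inequalities \eqref{A} is meaningful pointwise a.e.

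First I would recall that the computation \eqref{A} in the proof of Lemma~\ref{almoste} expands the left-hand side of this equation by the product rule and, via the Cauchy--Schwarz inequality applied to $\left\langle D^2u_\varepsilon\,\nabla u_\varepsilon,\nabla u_\varepsilon\right\rangle$, the trivial bound $|\nabla u_\varepsilon|^2\le\varepsilon+|\nabla u_\varepsilon|^2$, and $|p-2|\le p$ (which holds since $p>\tfrac{2N}{N+2}>1$), estimates it as
\[
\left|\divi\left(\left(\varepsilon+|\nabla u_\varepsilon|^2\right)^{\frac{p-2}{2}}\nabla u_\varepsilon\right)\right|\le(p+1)\left(\varepsilon+|\nabla u_\varepsilon|^2\right)^{\frac{p-2}{2}}|D^2u_\varepsilon|\quad\text{a.e. in }\Omega.
\]

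Then I would combine the two displays: substituting the equation into the estimate gives
\[
\left|4\lambda_1\left(-u_\varepsilon^++\left(u_\varepsilon^+\right)^3\right)+\lambda_2\vartheta_\varepsilon(u_\varepsilon)\right|\le(p+1)\left(\varepsilon+|\nabla u_\varepsilon|^2\right)^{\frac{p-2}{2}}|D^2u_\varepsilon|\quad\text{a.e. in }\Omega,
\]
and dividing by $p+1$ and squaring both (non-negative) sides yields the asserted inequality.

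The only subtle point is that every step must be valid almost everywhere rather than merely formally; this is exactly the role played by Lemma~\ref{almoste}, which upgrades the weak formulation \eqref{hv2} to a pointwise identity and provides the second-order integrability needed to make sense of $D^2u_\varepsilon$ and of the estimate \eqref{A}. Hence, once Lemma~\ref{almoste} is in hand, the present statement is essentially immediate --- the main work has already been carried out there, and what remains is only the short algebraic manipulation described above. One might also note that the constant $p+1$ simply records the two-term structure of the differentiated $p$-Laplacian-type operator appearing in \eqref{A}.
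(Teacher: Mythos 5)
Your proposal is correct and follows essentially the same route as the paper: it invokes the a.e.\ identity from Lemma~\ref{almoste}, expands the divergence by the product rule, and bounds it pointwise by $(p+1)\left(\varepsilon+|\nabla u_\varepsilon|^2\right)^{\frac{p-2}{2}}|D^2u_\varepsilon|$ using $|p-2|\leq p$ and Cauchy--Schwarz, which is exactly the computation the paper carries out (in index form) before squaring.
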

         \begin{proof}
         Let $\delta  _{ij}=1$ if $i=j$ and $\delta _{ij}=0$ if $i\neq j$. Indeed, by direct computations, we have
         \begin{align*}\label{erjie22}
           \left| 4\lambda_1 \left(-u_\varepsilon^+ +\left(u_\varepsilon^+\right) ^3\right)+\lambda_2\vartheta _\varepsilon (u_\varepsilon )\right| ^2
         =& \left(\sum_{i,j = 1}^{N} \left(\left(\varepsilon +|\nabla u_\varepsilon|^2\right)^\frac{p-2}{2}\delta _{ij}u_{\varepsilon x_ix_j}+(p-2)\left(\varepsilon +|\nabla u_\varepsilon|^2\right)^\frac{p-4}{2}u_{\varepsilon x_i} u_{\varepsilon x_j}u_{\varepsilon x_ix_j}\right) \right)^2\notag\\
           \leq&\left(\sum_{i,j = 1}^{N} \left(\left(\varepsilon +|\nabla u_\varepsilon|^2\right)^\frac{p-2}{2}\delta   _{ij}\left|u_{\varepsilon x_ix_j}\right|+p\left(\varepsilon +|\nabla u_\varepsilon|^2\right)^\frac{p-4}{2}\left|u_{\varepsilon x_i} u_{\varepsilon x_j}u_{\varepsilon x_ix_j}\right|\right) \right)^2\notag\\
           \leq &(p+1)^2\left(\left(\varepsilon +|\nabla u_\varepsilon|^2\right)^\frac{p-2}{2} |D^2 u_\varepsilon |\right)^2.
           \end{align*}
         \end{proof}

         For the minimizer $u_\varepsilon $ of {$\mathcal{J}_\varepsilon(u)$}  with  $\lambda_2>0$,  since $u_\varepsilon $ is uniformly bounded in {$C^{1,\alpha  }(\Omega')$   for any $\Omega' \Subset\Omega$ and  some} $\alpha \in(0,1)$, there exists a function $u $ such that $u_\varepsilon \rightarrow u$ in {$C^{1,\beta  }(\Omega'')$ for all $\beta\in (0,\alpha)$ and $\Omega''\Subset \Omega'$. It is easy to show that such a function $u$ is a non-negative minimizer of \eqref{ju}, namely,  the following proposition holds true. The proof is omitted.}
         \begin{proposition}\label{equal}
         {The limit of $u_\varepsilon $, denoted by $u$, is a non-negative minimizer of $\mathcal{J} (u)$  with  $\lambda_2>0$.}
         \end{proposition}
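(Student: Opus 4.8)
The plan is to pass to the limit $\varepsilon\to 0^+$ in the minimality of $u_\varepsilon$. Fix an arbitrary competitor $v\in\mathcal{K}$; by minimality, $\mathcal{J}_\varepsilon(u_\varepsilon)\le\mathcal{J}_\varepsilon(v)$ for every $\varepsilon\in(0,1)$. First I would check that $\mathcal{J}_\varepsilon(v)\to\mathcal{J}(v)$ as $\varepsilon\to 0^+$: one has $\tfrac1p(\varepsilon+|\nabla v|^2)^{p/2}\downarrow\tfrac1p|\nabla v|^p$ pointwise with domination by $2^{p/2}(1+|\nabla v|^p)\in L^1(\Omega)$, the term $\lambda_1(1-(v^+)^2)^2$ is independent of $\varepsilon$, and an elementary computation from the definition of $\vartheta_\varepsilon$ gives the pointwise bound $\left|\int_0^{s}\vartheta_\varepsilon(t)\,\text{d}t-s^+\right|\le\varepsilon$ for all $s\in\mathbb{R}$, so dominated convergence handles the penalty term as well.

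Next I would establish $\liminf_{\varepsilon\to 0^+}\mathcal{J}_\varepsilon(u_\varepsilon)\ge\mathcal{J}(u)$. Since $u_\varepsilon\to u$ in $C^{1,\beta}$ on compact subsets of $\Omega$, we have $\nabla u_\varepsilon\to\nabla u$ a.e.\ in $\Omega$, whence $\tfrac1p(\varepsilon+|\nabla u_\varepsilon|^2)^{p/2}\to\tfrac1p|\nabla u|^p$ a.e.\ and Fatou's lemma yields $\liminf_\varepsilon\int_\Omega\tfrac1p(\varepsilon+|\nabla u_\varepsilon|^2)^{p/2}\,\text{d}x\ge\int_\Omega\tfrac1p|\nabla u|^p\,\text{d}x$. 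For the lower-order terms I would use the uniform bound $\|u_\varepsilon\|_{L^\infty(\Omega)}\le C$ (the analogue of Theorem~\ref{t1} for $\mathcal{J}_\varepsilon$), $|\Omega|<\infty$, and $u_\varepsilon\to u$ a.e., so that dominated convergence gives $\int_\Omega(1-(u_\varepsilon^+)^2)^2\,\text{d}x\to\int_\Omega(1-(u^+)^2)^2\,\text{d}x$, while $\left|\int_0^{u_\varepsilon}\vartheta_\varepsilon(t)\,\text{d}t-u^+\right|\le\varepsilon+|u_\varepsilon^+-u^+|\to 0$ a.e.\ gives $\int_\Omega\int_0^{u_\varepsilon}\vartheta_\varepsilon(t)\,\text{d}t\,\text{d}x\to\int_\Omega u^+\,\text{d}x$. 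Adding the three contributions, and combining with the first step, one obtains for every $v\in\mathcal{K}$
\begin{align*}
\mathcal{J}(u)\le\liminf_{\varepsilon\to 0^+}\mathcal{J}_\varepsilon(u_\varepsilon)\le\limsup_{\varepsilon\to 0^+}\mathcal{J}_\varepsilon(u_\varepsilon)\le\lim_{\varepsilon\to 0^+}\mathcal{J}_\varepsilon(v)=\mathcal{J}(v).
\end{align*}

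Finally I would verify that $u\in\mathcal{K}$ and $u\ge 0$ a.e.\ in $\Omega$. The family $\{u_\varepsilon\}$ is uniformly bounded in $W^{1,p}(\Omega)$ (since $|\nabla u_\varepsilon|^p\le(\varepsilon+|\nabla u_\varepsilon|^2)^{p/2}$ and $\mathcal{J}_\varepsilon(g)$ is uniformly bounded for $\varepsilon\in(0,1)$, one repeats Step~1 of Theorem~\ref{t1}), so up to a further subsequence $u_\varepsilon\rightharpoonup u$ in $W^{1,p}(\Omega)$; as $u_\varepsilon-g\in W_0^{1,p}(\Omega)$ and $W_0^{1,p}(\Omega)$ is weakly closed, $u-g\in W_0^{1,p}(\Omega)$, i.e.\ $u\in\mathcal{K}$. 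Moreover each $u_\varepsilon$ is non-negative a.e.\ by the truncation argument of Step~2 of Theorem~\ref{t1}: replacing $u_\varepsilon$ by $u_\varepsilon^+$ leaves the $\lambda_1$- and $\lambda_2$-parts of $\mathcal{J}_\varepsilon$ unchanged and does not increase the gradient part, forcing $\nabla u_\varepsilon=0$ a.e.\ on $\{u_\varepsilon<0\}$, while $u_\varepsilon=g\ge 0$ on $\partial\Omega$; hence $u=\lim u_\varepsilon\ge 0$ a.e. The only point requiring care is the gradient term in the $\liminf$: since the integrand depends on $\varepsilon$, one cannot invoke weak lower semicontinuity of a single fixed convex functional, and must instead use the strong interior convergence $\nabla u_\varepsilon\to\nabla u$ (or the trivial bound $(\varepsilon+|\nabla u_\varepsilon|^2)^{p/2}\ge|\nabla u_\varepsilon|^p$ together with weak $W^{1,p}(\Omega)$ lower semicontinuity); the remaining limits are routine dominated convergence.
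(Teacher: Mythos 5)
Your argument is correct. The paper itself omits the proof of Proposition~\ref{equal} (it is declared easy and left out), so there is no official proof to compare against, but the route you take --- passing to the limit in $\mathcal{J}_\varepsilon(u_\varepsilon)\le\mathcal{J}_\varepsilon(v)$, using the elementary bound $\left|\int_0^{s}\vartheta_\varepsilon(t)\,\text{d}t-s^+\right|\le\varepsilon$, Fatou (or, equivalently, $(\varepsilon+|\nabla u_\varepsilon|^2)^{p/2}\ge|\nabla u_\varepsilon|^p$ plus weak lower semicontinuity) for the gradient term, dominated convergence with the uniform $L^\infty$ bound for the lower-order terms, and the uniform $W^{1,p}$ bound plus weak closedness of $g+W^{1,p}_0(\Omega)$ to get $u\in\mathcal{K}$ --- is exactly the standard argument the authors have in mind, and all the limits you invoke are justified by the locally uniform $C^{1,\beta}$ convergence stated before the proposition. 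One small economy: once $u\in\mathcal{K}$ and $\mathcal{J}(u)\le\mathcal{J}(v)$ for all $v\in\mathcal{K}$ are established, non-negativity of $u$ follows at once from Theorem~\ref{t1} (every minimizer of $\mathcal{J}$ over $\mathcal{K}$ is non-negative a.e.), so your separate truncation argument for each $u_\varepsilon$ is not needed, though it is also correct and mirrors Step~2 of that theorem.
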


         \section{Hausdorff measure of the free boundary when $\lambda_2>0$}\label{sec5}

         In this section, {we  show  that there exists {at least one}  minimizer with  finite {($N-1$)}{-dimensional Hausdorff} measure for the corresponding free boundary  when  $\lambda_2>0$.}

        First, {for $\sigma \in(0,1)$ and any minimizer, {denoted by $u$}, of $\mathcal{J}(u)$,} we define
            $ O_\sigma  :=\left\{|\nabla u |\leq \sigma  ^\frac{1}{p-1}\right\}$ and $  O_{\sigma   i}:=\left\{| u_{x_i} |\leq \sigma  ^\frac{1}{p-1}\right\}.$
          We prove the following lemma.
               \begin{lemma}\label{LL22} Assume that    $\lambda_2>0$.
                 Let $r_1$ and $C_0$ be positive constants {determined by} {Proposition~\ref{the32} and Theorem~\ref{the31}, respectively}. Let {$B_{r_2}\Subset \Omega $ be a ball with sufficiently small radius $r_2$ satisfying}
         $r_2\leq\min \left\{r_1,\left(\frac{1}{3 C_0}\right)^{\frac{p-1}{p}}\right\}$.
          {Then, there} exists at least one  minimizer, {denoted by $u$}, of $\mathcal{J}(u)$  such that for every {$x_0\in \Gamma ^+\cap B_{r_2}$, any $\sigma\in(0,1)$, and {any  $r\in (0,{r_2})$} with  $B_r(x_0)\subset B_{r_2}$} there holds
         \begin{align*}
         \int_{0}^{1}  \mathcal{L}^N\left({O_\sigma}  \cap B_{rs}(x_0)\cap \{u>0\}\right){\text{d}s}\leq  C \sigma r^{N-1},
         \end{align*}
               where  $C$ is a {positive} constant  depending only on $r_2$, $N$, $p$, $\lambda_1$, $\lambda_2$, $\| g\| _{{L^\infty}(\Omega )}$, {$\| g\| _{W^{1.p}(\Omega )}$,
                 and} {the} diameter of $\Omega $.
               \end{lemma}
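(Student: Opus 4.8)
The plan is to prove the inequality not for a generic minimizer but for the distinguished minimizer $u=\lim_{\varepsilon\to0}u_\varepsilon$ of Proposition~\ref{equal}, by first establishing an $\varepsilon$-uniform version for the penalized minimizers $u_\varepsilon$ and then letting $\varepsilon\to0$. First I would rewrite the left-hand side by Fubini's theorem: since $s\mapsto\mathcal{L}^N(O_\sigma\cap B_{rs}(x_0)\cap\{u>0\})$ is non-decreasing,
\begin{align*}
\int_0^1\mathcal{L}^N\big(O_\sigma\cap B_{rs}(x_0)\cap\{u>0\}\big)\text{d}s=\int_{O_\sigma\cap B_r(x_0)\cap\{u>0\}}\psi\,\text{d}x,\qquad \psi(x):=\Big(1-\tfrac{|x-x_0|}{r}\Big)^+,
\end{align*}
so it suffices to bound this weighted volume (with $\psi$ Lipschitz, $0\le\psi\le1$, $\operatorname{supp}\psi=\overline{B_r(x_0)}$, $|\nabla\psi|=1/r$ on $B_r(x_0)$) by $C\sigma r^{N-1}$, and I will do so for $u_\varepsilon$ uniformly in $\varepsilon$. (For $\sigma\ge1/2$ the bound is trivial, so assume $\sigma$ small.)

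For $u_\varepsilon$ I would use that, by Lemma~\ref{almoste}, $\divi V_\varepsilon=f_\varepsilon$ a.e.\ in $\Omega$, where $V_\varepsilon:=(\varepsilon+|\nabla u_\varepsilon|^2)^{\frac{p-2}{2}}\nabla u_\varepsilon$ and $f_\varepsilon:=4\lambda_1(-u_\varepsilon^++(u_\varepsilon^+)^3)+\lambda_2\vartheta_\varepsilon(u_\varepsilon)$. Exactly as in the proof of Proposition~\ref{the32}, the choice $r_2\le r_1$ and the optimal growth of $u$ on $B_r(x_0)$ (Theorem~\ref{the31}, inherited by $u_\varepsilon$ through the $C^{1,\beta}_{\mathrm{loc}}$-convergence) give $f_\varepsilon\ge\tfrac12\lambda_2$ on $\{u_\varepsilon>\varepsilon\}\cap B_r(x_0)$ for $\varepsilon$ small, while $f_\varepsilon\ge0$ on $B_r(x_0)$ and $f_\varepsilon=0$ a.e.\ on $\{u_\varepsilon=0\}$. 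Writing $\delta:=\sigma^{\frac{1}{p-1}}$, choose a Lipschitz $\Theta\colon[0,\infty)\to[0,1]$ with $\Theta\equiv1$ on $[0,\delta]$, $\Theta\equiv0$ on $[2\delta,\infty)$, $|\Theta'|\le1/\delta$; since $t\mapsto(\varepsilon+t^2)^{\frac{p-2}{2}}t$ is increasing, $|V_\varepsilon|\le C(p)\sigma$ on $\{|\nabla u_\varepsilon|\le2\delta\}$ once $\varepsilon\le\delta^2$. Testing $\divi V_\varepsilon=f_\varepsilon$ against $\psi\,\Theta(|\nabla u_\varepsilon|)$ and using $\Theta(|\nabla u_\varepsilon|)\equiv1$ on $O_\sigma^\varepsilon:=\{|\nabla u_\varepsilon|\le\delta\}$ together with the lower bound on $f_\varepsilon$, I obtain
\begin{align*}
\tfrac12\lambda_2\int_{O_\sigma^\varepsilon\cap B_r(x_0)\cap\{u_\varepsilon>\varepsilon\}}\psi\,\text{d}x\le -\int V_\varepsilon\cdot\nabla\psi\,\Theta(|\nabla u_\varepsilon|)\,\text{d}x-\int V_\varepsilon\cdot\nabla(|\nabla u_\varepsilon|)\,\psi\,\Theta'(|\nabla u_\varepsilon|)\,\text{d}x=:A_\varepsilon+B_\varepsilon.
\end{align*}
The term $A_\varepsilon$ is immediate: $|A_\varepsilon|\le C(p)\sigma r^{-1}|B_r(x_0)|\le C\sigma r^{N-1}$.

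The main obstacle is $B_\varepsilon$, which is supported in the annulus $\{\delta\le|\nabla u_\varepsilon|\le2\delta\}\cap B_r(x_0)$ and is bounded by $C(p)\tfrac{\sigma}{\delta}\int_{\{\delta\le|\nabla u_\varepsilon|\le2\delta\}\cap B_r(x_0)}|D^2u_\varepsilon|\,\text{d}x$. For $\varepsilon$ small this annulus is disjoint both from $\{u_\varepsilon=0\}$ (where $D^2u_\varepsilon=0$ a.e.) and from $\{0<u_\varepsilon\le\varepsilon\}$ (which, since $\nabla u\equiv0$ on the closed set $\{u=0\}$ and $u_\varepsilon\to u$ in $C^{1,\beta}_{\mathrm{loc}}$, concentrates near $\{u=0\}$, where $|\nabla u_\varepsilon|<\delta$), so the integral may be taken over $\{u_\varepsilon>\varepsilon\}$ only; there Lemma~\ref{erjie} together with $f_\varepsilon\ge\tfrac12\lambda_2$ furnishes the pointwise bound $|D^2u_\varepsilon|\le C(p,\lambda_2)(\varepsilon+|\nabla u_\varepsilon|^2)^{\frac{p-2}{2}}|D^2u_\varepsilon|^2$, which brings the weighted second-order estimates \eqref{2jdgj} (for $p\ge2$) and \eqref{lr224a} (for $\tfrac{2N}{N+2}<p<2$) into play. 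Reducing $|B_\varepsilon|$ to $C\sigma r^{N-1}$ then comes down to a careful bookkeeping of the powers of $\delta=\sigma^{\frac{1}{p-1}}$ on this annulus, exploiting $\delta^{p-2}\le(\varepsilon+|\nabla u_\varepsilon|^2)^{\frac{p-2}{2}}$ when $p\ge2$ and $(\varepsilon+|\nabla u_\varepsilon|^2)^{\frac{p-2}{2}}\le\delta^{p-2}$ when $p<2$; I expect this to be the crux, and it is also where the constant $C$—depending on $c_1$ from \eqref{uec1}, hence on $N,p,\lambda_1,\lambda_2,\|g\|_{L^\infty(\Omega)},\|g\|_{W^{1,p}(\Omega)}$ and $\operatorname{diam}\Omega$—is produced. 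Granting $\int_{O_{\sigma'}^\varepsilon\cap B_r(x_0)\cap\{u_\varepsilon>\varepsilon\}}\psi\,\text{d}x\le C\sigma'r^{N-1}$ for all small $\sigma'$, I would finish by passing to the limit: for a.e.\ $x\in\{u>0\}$ with $|\nabla u(x)|\le\sigma^{\frac{1}{p-1}}$ one has $u_\varepsilon(x)>\varepsilon$ and $|\nabla u_\varepsilon(x)|\le(2\sigma)^{\frac{1}{p-1}}$ for $\varepsilon$ small, so $\chi_{O_\sigma\cap\{u>0\}}\psi\le\liminf_{\varepsilon\to0}\chi_{O_{2\sigma}^\varepsilon\cap\{u_\varepsilon>\varepsilon\}}\psi$ a.e.; Fatou's lemma and the $\varepsilon$-uniform bound with $\sigma'=2\sigma$ then give $\int_{O_\sigma\cap B_r(x_0)\cap\{u>0\}}\psi\,\text{d}x\le 2C\sigma r^{N-1}$, and undoing the Fubini identity yields the claim for $u$, which is a minimizer of $\mathcal{J}$ by Proposition~\ref{equal}.
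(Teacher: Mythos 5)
Your overall scaffolding matches the paper's: work with the penalized minimizers $u_\varepsilon$ of Lemma~\ref{almoste}, prove an $\varepsilon$-uniform estimate, and pass to the limit $u$ of Proposition~\ref{equal}; the Fubini rewriting of $\int_0^1\mathcal{L}^N(\cdot\cap B_{rs}(x_0))\,\mathrm{d}s$ as a weighted integral with $\psi=(1-|x-x_0|/r)^+$ is also fine and equivalent to the paper's integration in $s$. The genuine gap is exactly the step you defer as ``the crux'': the commutator term $B_\varepsilon$ coming from the gradient cutoff $\Theta(|\nabla u_\varepsilon|)$ cannot be bounded by $C\sigma r^{N-1}$ with the tools available in the paper. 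Writing $W:=(\varepsilon+|\nabla u_\varepsilon|^2)^{\frac{p-2}{2}}|D^2u_\varepsilon|$ and $A_\delta:=\{\delta\le|\nabla u_\varepsilon|\le 2\delta\}\cap B_r(x_0)$ with $\delta=\sigma^{\frac1{p-1}}$, the best one can do is
\begin{align*}
|B_\varepsilon|\;\le\;\frac{1}{\delta}\int_{A_\delta}(\varepsilon+|\nabla u_\varepsilon|^2)^{\frac{p-2}{2}}|\nabla u_\varepsilon|\,|D^2u_\varepsilon|\,\mathrm{d}x\;\le\;2\int_{A_\delta}W\,\mathrm{d}x,
\end{align*}
and the only control on $W$ is the $L^2$-type bound \eqref{2jdgj} (resp.\ \eqref{lr224a}). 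For $p\ge 2$, Cauchy--Schwarz together with $(\varepsilon+|\nabla u_\varepsilon|^2)^{\frac{p-2}{2}}\le C\delta^{p-2}$ on $A_\delta$ gives $\int_{A_\delta}W\le C\delta^{\frac{p-2}{2}}r^{N-1}=C\sigma^{\frac{p-2}{2(p-1)}}r^{N-1}$, and since $\frac{p-2}{2(p-1)}<1$ for every $p$, this is \emph{not} $O(\sigma r^{N-1})$ as $\sigma\to0$; for $p\le 2$ one gets only $Cr^{N-1}$, with no smallness in $\sigma$ at all. Your alternative route via Lemma~\ref{erjie} ($W\ge c\lambda_2$ on $\{u_\varepsilon>\varepsilon\}$, hence $\int_{A_\delta}|D^2u_\varepsilon|\le C\int W^2$) fares no better: it yields $|B_\varepsilon|\le C\frac{\sigma}{\delta}r^{N-2}=C\sigma^{\frac{p-2}{p-1}}r^{N-2}$, which exceeds $\sigma r^{N-1}$ by the unbounded factor $(\delta r)^{-1}$. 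One cannot repair this by coarea-type arguments either, since an upper bound on $\int_{A_\delta}|D^2u_\varepsilon|$ of size $\delta r^{N-1}$ would amount to a bound on the $(N-1)$-measure of the level sets of $|\nabla u_\varepsilon|$, i.e.\ essentially to the conclusion being proved. Since the whole Hausdorff-measure argument of Theorem~\ref{t41} hinges on the factor $\sigma$ (balls of radius $\sigma$ are packed into $O_\sigma\cap\{u>0\}$), losing it is fatal, not a bookkeeping matter.

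The paper avoids this by never cutting off in $|\nabla u_\varepsilon|$: it differentiates \eqref{hv2} in $x_i$ and tests with the truncation $H(u_{\varepsilon x_i})$, whose \emph{sup norm} is of size $2^{p-1}\sigma$ (for $p\ge2$). The error then appears as a boundary term on $\partial B_{rs}(x_0)$, which after integrating in $s$ and applying H\"older with \eqref{2jdgj}/\eqref{lr224a} is exactly $C\sigma r^{N-1}$ (see \eqref{3122}, \eqref{31224}); the good term is the quadratic form in $\nabla u_{\varepsilon x_i}$, yielding $\int_0^1\int_{B_{rs}(x_0)\cap O_\sigma}W^2\le C\sigma r^{N-1}$, and only \emph{then} is Lemma~\ref{erjie} used, in the limit $\varepsilon\to0$, to convert $W\gtrsim\lambda_2$ on $\{u>0\}\cap B_{r_2}$ into the measure bound. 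In short, the $\sigma$ gain must be produced by the differentiated equation with a truncation of $u_{\varepsilon x_i}$ as test function; your test function $\psi\,\Theta(|\nabla u_\varepsilon|)$ in the undifferentiated equation produces a derivative of the cutoff of size $1/\delta$ against which the available Hessian estimates are powerless, so the proposal does not close as written.
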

            \begin{proof}
               Let  $u_\varepsilon  $  be a minimizer of {$J_\varepsilon (u)$, and $u$ be the limit of $u_\varepsilon  $ determined by Proposition~\ref{equal}.} Define
           \begin{align*}
             O_\varepsilon:=\left\{\left(\varepsilon +|\nabla u_\varepsilon| ^2\right)^\frac{1}{2}\leq 2\sigma  ^\frac{1}{p-1}\right\}\quad \text{and} \quad O_{\varepsilon i}:=\left\{\left(\varepsilon +\left| u_{\varepsilon x_i}\right| ^2\right)^\frac{1}{2}\leq 2\sigma  ^\frac{1}{p-1}\right\}.
             \end{align*}

           We claim that
         $
           \left(O_\sigma  \cap B_{r_2}\right) \subset \left(O_\varepsilon  \cap B_{r_2}\right).
        $
         Indeed, there exists $\varepsilon  _1$ such that {$\left\|\left(\varepsilon +|\nabla u_\varepsilon| ^2\right)^\frac{1}{2}-\nabla u\right\| _{L^\infty({\overline {B}_{r_2})}}\leq \sigma  ^\frac{1}{p-1}$ holds true for any $\varepsilon  \in(0,\varepsilon  _1)$.}
             Then, for any $x\in O_\sigma  \cap B_{r_2}$,  {it holds that}
         \begin{align*}
         \left\| \left(\varepsilon +|\nabla u_\varepsilon| ^2\right)^\frac{1}{2} \right\|_{L^\infty\left({\overline {B}_{r_2}}\right)} \leq & \left\| \left(\varepsilon +|\nabla u_\varepsilon| ^2\right)^\frac{1}{2}-\nabla u\right\| _{L^\infty\left({\overline {B}_{r_2}}\right)}+\big\| \nabla u \big\| _{{{L^\infty\left({\overline {B}_{r_2}}\right)}}}
         \leq  \sigma  ^\frac{1}{p-1}+\sigma  ^\frac{1}{p-1}
         = 2\sigma  ^\frac{1}{p-1},
           \end{align*}
which implies that such $x\in O_\varepsilon  \cap B_{r_2}$.

             We prove in two cases: $p\in[2,+\infty)$ and $p\in\left(\frac{2N}{N+2},2\right)$.

             {\textbf{Case 1: $p\in[2,+\infty)$.}} Let $H$ {be given} by
             \begin{equation*}
              H(\eta ){:=}\left\{\begin{aligned}
              & 2^{p-1}\sigma  ,&& \eta > 2\sigma  ^\frac{1}{p-1},\\
              & 2^{p-2}\sigma ^ \frac{p-2}{p-1}\eta  ,&& |\eta| \leq 2\sigma  ^\frac{1}{p-1},\\
              &-2^{p-1}\sigma ,&& \eta <- 2\sigma  ^\frac{1}{p-1}.\\
              \end{aligned}\right.
              \end{equation*}

 {In view of Lemma~\ref{almoste}, differentiating} \eqref{hv2} with respect to $x_i$ gives
             \begin{align}\label{dxi22}
                 0=&-\divi~\left((p-2)\left(\varepsilon +|\nabla u_\varepsilon| ^2\right)^\frac{p-4}{2}\nabla u_\varepsilon\nabla u_{\varepsilon x_i}\nabla u_{\varepsilon }+\left(\varepsilon +|\nabla u_\varepsilon| ^2\right)^\frac{p-2}{2} \nabla u_{\varepsilon x_i}\right)\notag\\
                 &+4\lambda_1{\left(- \chi_{\{u_{\varepsilon}>0\}} +3\left(u_\varepsilon^+\right)^2 \right)u_{\varepsilon x_i}}+\lambda_2\vartheta' _\varepsilon (u_\varepsilon )u_{\varepsilon x_i}  .
             \end{align}

 Multiplying \eqref{dxi22} by $H\left(u_{\varepsilon  x_i}\right)$ and integrating over $B_{rs}(x_0)$ {with $r\in(0,r_2)$}, we have
             \begin{align}\label{lebes1}
                &  \int_{B_{rs}(x_0)}     \left((p-2)\left(\varepsilon +|\nabla u_\varepsilon| ^2\right)^\frac{p-4}{2}  \nabla u_\varepsilon\nabla u_{\varepsilon x_i}\nabla u_{\varepsilon }  +  \left(\varepsilon +|\nabla u_\varepsilon| ^2\right)^\frac{p-2}{2}   \nabla u_{\varepsilon x_i}\right)  \nabla H\left(u_{\varepsilon  x_i}\right)\text{d}x\notag\\
                &  +  {4\lambda_1}\int_{B_{rs}(x_0)}    {\left(- \chi_{\{u_{\varepsilon}>0\}}   +3\left(u_\varepsilon^+\right)^2 \right)u_{\varepsilon x_i}}H\left(u_{\varepsilon  x_i}\right)\text{d}x  +  {\lambda_2}\int_{B_{rs}(x_0)}     \vartheta' _\varepsilon (u_\varepsilon )u_{\varepsilon x_i}H\left(u_{\varepsilon  x_i}\right)\text{d}x\notag\\
                =\!&  \int_{\partial B_{rs}(x_0)}     \left((p-2)\left(\varepsilon +|\nabla u_\varepsilon| ^2\right)^\frac{p-4}{2}  \nabla u_\varepsilon\nabla u_{\varepsilon x_i}\nabla u_{\varepsilon }  +  \left(\varepsilon +|\nabla u_\varepsilon|^2\right)^\frac{p-2}{2}   \nabla u_{\varepsilon x_i}\!\right)  H\left(u_{\varepsilon  x_i}\right){\boldsymbol{\nu}} \text{d}S,
             \end{align}
             where {$\boldsymbol{\nu} $} is the unit outward normal vector.

 {Using the H\"{o}lder's inequality, we deduce by \eqref{2jdgj} and $\| \nabla u_\varepsilon\|_{L^\infty(B_{r_2})} \leq c_1$ (see \eqref{uec1}) that}
             \begin{align}\label{3122}
              &\int_{0}^{1}  \int_{\partial B_{rs}(x_0)}  \left((p-2)\left(\varepsilon +|\nabla u_\varepsilon| ^2\right)^\frac{p-4}{2}\nabla u_\varepsilon\nabla u_{\varepsilon x_i}\nabla u_{\varepsilon }+\left(\varepsilon +|\nabla u_\varepsilon| ^2\right)^\frac{p-2}{2} \nabla u_{\varepsilon x_i}\right) H\left(u_{\varepsilon  x_i}\right){\boldsymbol{\nu}} \text{d}S  \text{d}s \notag\\
              \leq & (p-1)\int_{B_{r}(x_0)}  \left(\varepsilon +|\nabla u_\varepsilon| ^2\right)^\frac{p-2}{2} |D^2u_\varepsilon |\cdot |H\left(u_{\varepsilon  x_i}\right)|\text{d}x \notag\\
              \leq & (p-1) \int_{B_{r}(x_0)}  \left(\varepsilon +|\nabla u_\varepsilon| ^2\right)^\frac{p-2}{2} |D^2u_\varepsilon | 2^{p-1}\sigma  \text{d}x \notag\\
              \leq & 2^{p-1}(p-1)\sigma   \left(\int_{B_{r}(x_0)}  \left(\varepsilon +|\nabla u_\varepsilon| ^2\right)^\frac{p-2}{2} \left|D^2u_\varepsilon \right|^2 \text{d}x\right)^{\frac{1}{2}} \left(\int_{B_{r}(x_0)}  \left(\varepsilon +|\nabla u_\varepsilon| \right)^\frac{p-2}{2}\text{d}x\right)^{\frac{1}{2}}\notag\\
              \leq & 2^{p-1}(p-1)\sigma   \left(\int_{B_{r}(x_0)}  \left(\varepsilon +|\nabla u_\varepsilon| ^2\right)^\frac{p-2}{2} \left|D^2u_\varepsilon \right|^2 \text{d}x\right)^{\frac{1}{2}} \left(\int_{B_{r}(x_0)}  \left(1 +c_1 ^2\right)^\frac{p-2}{2}\text{d}x\right)^{\frac{1}{2}}\notag\\
              \leq & 2^{p-1}(p-1)\sigma   C_3^{\frac{1}{2}} r^{\frac{N-2}{2}}C(N,c_1,p)r^\frac{N}{2}\notag\\
              \leq & {C(c_1,C_3,N,p) \sigma  r^{N-1}}.
              \end{align}

By $\left|u_{\varepsilon x_i} \right|<\left(\varepsilon +\left|u_{\varepsilon x_i} \right|^2\right)^\frac{1}{2}$, we have $ \left\{(\varepsilon +\left| u_{\varepsilon x_i}\right| ^2)^\frac{1}{2}\leq 2\sigma  ^\frac{1}{p-1}\right\}\subset \left\{\left| u_{\varepsilon x_i}\right|\leq 2\sigma  ^\frac{1}{p-1}\right\},$ which, along with $O_\varepsilon \subset O_{\varepsilon i}$, implies
              \begin{align}\label{3222}
                  &\sum_{i = 1}^{N}  \int_{B_{rs}(x_0)}   \left((p-2)\left(\varepsilon +|\nabla u_\varepsilon| ^2\right)^\frac{p-4}{2}\nabla u_\varepsilon\nabla u_{\varepsilon x_i}  \nabla u_{\varepsilon }   +\left(\varepsilon +|\nabla u_\varepsilon| ^2\right)^\frac{p-2}{2} \nabla u_{\varepsilon x_i}\right)\notag\\
                  &\qquad \cdot\nabla H\left(u_{\varepsilon  x_i}\right)\text{d}x\notag\\
                 \geq &\sum_{i = 1}^{N}  \int_{B_{rs}(x_0)\cap O_{\varepsilon i}}   \left((p-2)\left(\varepsilon +|\nabla u_\varepsilon| ^2\right)^\frac{p-4}{2}\nabla u_\varepsilon\nabla u_{\varepsilon x_i}\nabla u_{\varepsilon }+\left(\varepsilon +|\nabla u_\varepsilon| ^2\right)^\frac{p-2}{2} \nabla u_{\varepsilon x_i}\right)\notag\\
                 &\qquad\cdot 2^{p-2}\sigma ^ \frac{p-2}{p-1}\nabla u_{\varepsilon x_i}\text{d}x\notag\\
                  \geq &\sum_{i = 1}^{N}  \int_{B_{rs}(x_0)\cap O_{\varepsilon }}   \left(\varepsilon +|\nabla u_\varepsilon| ^2\right)^\frac{p-2}{2}\left|\nabla u_{\varepsilon x_i}\right|^2 \left(\varepsilon +|\nabla u_\varepsilon| ^2\right)^\frac{p-2}{2}\text{d}x\notag\\
                  = &\int_{B_{rs}(x_0)\cap O_{\varepsilon }}   \left(\left(\varepsilon +|\nabla u_\varepsilon| ^2\right)^\frac{p-2}{2}\left|D^2 u_{\varepsilon }\right|\right)^2 \text{d}x\notag\\
                  \geq &\int_{B_{rs}(x_0)\cap O_{\sigma   }}   \left(\left(\varepsilon +|\nabla u_\varepsilon| ^2\right)^\frac{p-2}{2}\left|D^2 u_{\varepsilon }\right| \right)^2\text{d}x.
              \end{align}

Since $\vartheta _\varepsilon (t)$ is non-decreasing in $t$ and $H(\eta )\eta \geq 0$, it follows that
              \begin{align}\label{3322}
                  \int_{B_{rs}(x_0)}  \lambda_2\vartheta' _\varepsilon (u_\varepsilon )u_{\varepsilon x_i}H\left(u_{\varepsilon  x_i}\right)\text{d}x\geq 0.
              \end{align}

By the definition of $H$ and $\left(\|  u_\varepsilon\|_{L^\infty(B_{r_2})}+\| \nabla u_\varepsilon\|_{L^\infty(B_{r_2})}\right) \leq c_1$ (see \eqref{uec1}), it follows that
              \begin{align}\label{3422}
                   \left|{4\lambda_1}\int_{B_{rs}(x_0)}   \left(- \chi_{\{u_{\varepsilon}>0\}} +3\left(u_\varepsilon^+\right)^2 \right)u_{\varepsilon x_i} H\left(u_{\varepsilon  x_i}\right)\text{d}x\right|
                  \leq &{4\lambda_1}\int_{B_{rs}(x_0)}  \left| \left(- \chi_{\{u_{\varepsilon}>0\}} +3\left(u_\varepsilon^+\right)^2 \right)u_{\varepsilon x_i}H\left(u_{\varepsilon  x_i}\right)\right|\text{d}x\notag\\
                  \leq & {4\lambda_1}\int_{B_{rs}(x_0)}   \left|u_{\varepsilon x_i} \right|\cdot\left( 1+3 u_{\varepsilon} ^2\right)2^{p-1}\sigma  \text{d}x\notag\\
                  \leq & {2^{p+1}\sigma \lambda _1}\int_{B_{rs}(x_0)}  |\nabla u_{\varepsilon}| \cdot\left( 1+3 u_{\varepsilon} ^2\right)\text{d}x\notag\\
                  \leq &  {C(c_1,N,p,\lambda_1)\sigma r^Ns^N}.
              \end{align}

              Thus, for sufficiently small $\varepsilon $, by \eqref{lebes1}, \eqref{3122}, \eqref{3222}, {\eqref{3322}, and \eqref{3422}}, we obtain
              \begin{align*}
                \int_{0}^{1}  \int_{B_{rs}(x_0)\cap O_{\sigma } }\left(\left(\varepsilon +|\nabla u_\varepsilon| ^2\right)^\frac{p-2}{2}\left|D^2 u_{\varepsilon }\right| \right)^2\text{d}x\text{d}s
                \leq C( c_1,C_3,r_2,N,p,\lambda_1)\sigma r^{N-1},
              \end{align*}
             which, along with Lemma~\ref{erjie}, leads to
          \begin{align*}
              \int_{0}^{1}  \int_{B_{rs}(x_0)\cap O_{\sigma   }}     \frac{\left(4\lambda_1 \left(-u_\varepsilon^+ +\left(u_\varepsilon^+\right) ^3\right)+\lambda_2\vartheta _\varepsilon (u_\varepsilon )\right)^2}{(p+1)^2}\text{d}x\text{d}s\leq {C( c_1,C_3,r_2,N,p,\lambda_1)\sigma r^{N-1}}.
          \end{align*}

 {Letting $\varepsilon \rightarrow 0$,   we obtain}
             \begin{align*}
                  \int_{0}^{1}  \int_{B_{rs}(x_0)\cap O_{\sigma   }}     \frac{\left(4\lambda _1\left(-u^++(u^+)^3\right)+\lambda_2\chi _{\{u>0\}}\right)^2}{(p+1)^2}\text{d}x\text{d}s
              \leq   {C( c_1,C_3,r_2,N,p,\lambda_1)\sigma r^{N-1}}.
            \end{align*}

Note  that  {Theorem~\ref{the31} ensures that} $|u|\leq {C_0r_2^{\frac{p}{p-1}}}\leq C_0\frac{\lambda_2}{16\lambda_1 C_0}\leq\frac{\lambda_2}{16\lambda_1 } $ in $B_{r_2}$ for $\lambda_1>0$  and $\lambda_2>0$. Then, for any {$\lambda_1> 0$} and $\lambda_2>0$, we always have \begin{align*}
4\lambda _1{\left(-u+u^3\right)+\lambda_2}\geq \frac{3\lambda_2}{4}\ \text{in}\  B_{r_2}.
\end{align*}   Therefore, we obtain
          \begin{align*}
            \int_{0}^{1}  \int_{B_{rs}(x_0)\cap O_{\sigma   }\cap \{u>0 \}}   \frac{9\lambda_2^2}{16(p+1)^2}\text{d}x\text{d}s
              \leq &\int_{0}^{1}  \int_{B_{rs}(x_0)\cap O_{\sigma   }\cap \{u>0 \}}   \frac{\left(4\lambda _1{\left(-u+u^3\right)+\lambda_2}\right)^2}{(p+1)^2}\text{d}x\text{d}s
             \leq C\sigma   r^{N-1}.
             \end{align*}
{It follows that}
          \begin{align*}
              \int_{0}^{1}  \mathcal{L}^N(B_{rs}(x_0)\cap O_{\sigma   }\cap \{u> 0 \}) \text{d}s \leq \frac{16}{9}C\sigma  {(p+1)^2}\lambda_2^{-2}r^{N-1}=:{C_5 \sigma  r^{N-1}}.
          \end{align*}

         {\textbf{Case 2: $p\in\left(\frac{2N}{N+2},2\right)$}.} Let
          \begin{equation*}
           H(\eta ){:=}\left\{\begin{aligned}
           & 2\sigma ^\frac{1}{p-1}\left(\varepsilon +4\sigma  ^\frac{2}{p-1}\right)^\frac{p-2}{2},&& \eta > 2\sigma  ^\frac{1}{p-1},\\
           & (\varepsilon +\eta^2) ^\frac{p-2}{2}\eta  ,&& |\eta| \leq 2\sigma  ^\frac{1}{p-1},\\
           &-2\sigma ^\frac{1}{p-1}\left(\varepsilon +4\sigma  ^\frac{2}{p-1}\right)^\frac{p-2}{2},&& \eta <- 2\sigma  ^\frac{1}{p-1}.\\
           \end{aligned}\right.
           \end{equation*}

Multiplying  \eqref{dxi22} by $H\left(u_{\varepsilon  x_i}\right)$ and integrating over $B_{rs}(x_0)$ {with $r\in(0,r_2)$}, we have
          \begin{align}\label{lebes2}
            &  \int_{B_{rs}(x_0)}     \left((p-2)\left(\varepsilon +|\nabla u_\varepsilon| ^2\right)^\frac{p-4}{2}  \nabla u_\varepsilon\nabla u_{\varepsilon x_i}\nabla u_{\varepsilon }  +  \left(\varepsilon +|\nabla u_\varepsilon| ^2\right)^\frac{p-2}{2}  \nabla u_{\varepsilon x_i} \right)  \nabla H\left(u_{\varepsilon  x_i}\right)\text{d}x\notag\\
            &  +  {4\lambda_1}\int_{B_{rs}(x_0)}  \!  \left(- \chi_{\{u_{\varepsilon}>0\}} +3\left(u_\varepsilon^+\right)^2 \right)u_{\varepsilon x_i} H\left(u_{\varepsilon  x_i}\right)\text{d}x  +  {\lambda_2}\int_{B_{rs}(x_0)}    {\vartheta' _\varepsilon (u_\varepsilon )}u_{\varepsilon x_i}H\left(u_{\varepsilon  x_i}\right)\text{d}x\notag\\
           =  &\int_{\partial B_{rs}(x_0)}  \!   \left((p-2)\left(\varepsilon +|\nabla u_\varepsilon| ^2\right)^\frac{p-4}{2}  \nabla u_\varepsilon\nabla u_{\varepsilon x_i}\nabla u_{\varepsilon}  +  \left(\varepsilon +|\nabla u_\varepsilon| ^2\right)^\frac{p-2}{2}   \nabla u_{\varepsilon x_i}  \right)   H\left(u_{\varepsilon  x_i}\right){\boldsymbol{\nu}} \text{d}S.
          \end{align}

  {By  the H\"{o}lder's inequality and \eqref{lr224a}, we deduce that}
          \begin{align}\label{31224}
           &\int_{0}^{1}  \int_{\partial B_{rs}(x_0)}   \left((p-2)\left(\varepsilon +|\nabla u_\varepsilon| ^2\right)^\frac{p-4}{2}\nabla u_\varepsilon\nabla u_{\varepsilon x_i}\nabla u_{\varepsilon }+\left(\varepsilon +|\nabla u_\varepsilon| ^2\right)^\frac{p-2}{2} \nabla u_{\varepsilon x_i}\right) H\left(u_{\varepsilon  x_i}\right){\boldsymbol{\nu}} \text{d}S  \text{d}s \notag\\
           \leq & \int_{0}^{1}  \int_{\partial B_{rs}(x_0)}   \left|(p-2)\left(\varepsilon +|\nabla u_\varepsilon| ^2\right)^\frac{p-4}{2}\nabla u_\varepsilon\nabla u_{\varepsilon x_i}\nabla u_{\varepsilon }+\left(\varepsilon +|\nabla u_\varepsilon| ^2\right)^\frac{p-2}{2} \nabla u_{\varepsilon x_i}\right| H\left(u_{\varepsilon  x_i}\right){\boldsymbol{\nu}} \text{d}S  \text{d}s \notag\\
           \leq & (p+1)\int_{B_{r}(x_0)}  \left(\varepsilon +|\nabla u_\varepsilon| ^2\right)^\frac{p-2}{2}\left|D^2u_\varepsilon \right|\cdot |H\left(u_{\varepsilon  x_i}\right)|\text{d}x \notag\\
           \leq & (p+1)  \left(\int_{B_{r}(x_0)}  \left(\left(\varepsilon +|\nabla u_\varepsilon| ^2\right)^\frac{p-2}{2}|D^2u_\varepsilon |\right)^2 \text{d}x\right)^{\frac{1}{2}} \left(\int_{B_{r}(x_0)}  |H\left(u_{\varepsilon  x_i}\right)|^2 \text{d}x\right)^{\frac{1}{2}}\notag\\
           \leq & (p+1) C_4^{\frac{1}{2}} r^{\frac{N-2}{2}}2\sigma ^\frac{1}{p-1}\left(\varepsilon +4\sigma  ^\frac{2}{p-1}\right)^\frac{p-2}{2}C(N)r^\frac{N}{2}\notag\\
           \leq & {(p+1) C_4^{\frac{1}{2}} r^{\frac{N-2}{2}}2^{p-1}\sigma  C(N)r^\frac{N}{2}}\notag\\
          \leq & {C(C_4,N,p)\sigma  r^{N-1}}.
           \end{align}

 {We infer from $p\in \left(\frac{2N}{N+2},2\right)$ and $O_\varepsilon \subset O_{\varepsilon i}$ that}
           \begin{align}\label{32224}
               &  \sum_{i = 1}^{N}  \int_{B_{rs}(x_0)}     \left((p-2)\left(\varepsilon +|\nabla u_\varepsilon| ^2\right)^\frac{p-4}{2}  \nabla u_{\varepsilon }\nabla u_{\varepsilon x_i}\nabla u_{\varepsilon}+\left(\varepsilon +| \nabla u_\varepsilon| ^2\right)^\frac{p-2}{2}  \right)\nabla H\left(u_{\varepsilon  x_i}\right)\text{d}x\notag\\
               \geq& \sum_{i = 1}^{N}  \int_{B_{rs}(x_0)\cap O_{\varepsilon i}}\left((p-2)\left(\varepsilon +|\nabla u_\varepsilon| ^2\right)^\frac{p-4}{2}\left|\nabla u_{\varepsilon }\nabla u_{\varepsilon x_i}\right|^2\!\!+\left(\varepsilon +|\nabla u_\varepsilon| ^2\right)^\frac{p-2}{2} \left|\nabla u_{\varepsilon x_i}\right|^2\right)\notag\\
               &\qquad \qquad\qquad\times  \left((p-2)\left(\varepsilon +u_{\varepsilon x_i}^2\right)^\frac{p-4}{2}u_{\varepsilon x_i}^2+\left(\varepsilon +u_{\varepsilon x_i}^2\right)^\frac{p-2}{2}\right)\text{d}x\notag\\
               \geq &(p-1)^2\sum_{i = 1}^{N}  \int_{B_{rs}(x_0)\cap O_{\varepsilon i}}   \left(\varepsilon +|\nabla u_\varepsilon| ^2\right)^\frac{p-2}{2}\left|\nabla u_{\varepsilon x_i}\right|^2\left(\varepsilon +u_{\varepsilon x_i}^2\right)^\frac{p-2}{2}\text{d}x\notag\\
               \geq &(p-1)^2\sum_{i = 1}^{N}  \int_{B_{rs}(x_0)\cap O_{\varepsilon }}   \left(\varepsilon +|\nabla u_\varepsilon| ^2\right)^{p-2}\left|\nabla u_{\varepsilon x_i}\right|^2\text{d}x\notag\\
               = &(p-1)^2\int_{B_{rs}(x_0)\cap O_{\varepsilon }}   \left(\varepsilon +|\nabla u_\varepsilon| ^2\right)^{p-2}\left|D^2 u_{\varepsilon }\right|^2 \text{d}x\notag\\
               \geq &(p-1)^2\int_{B_{rs}(x_0)\cap O_{\sigma   }}  \left(\varepsilon +|\nabla u_\varepsilon| ^2\right)^{p-2}\left|D^2 u_{\varepsilon }\right| ^2\text{d}x.
           \end{align}

Since $\vartheta _\varepsilon (t)$ is non-decreasing in $t$ and $H(\eta )\eta \geq 0$, we get
           \begin{align}\label{33224}
               {\lambda_2}\int_{B_{rs}(x_0)} \vartheta' _\varepsilon (u_\varepsilon )u_{\varepsilon x_i}H\left(u_{\varepsilon  x_i}\right)\text{d}x\geq 0.
           \end{align}

 By the definition of $H$ and $\left(\|   u_\varepsilon\|_{L^\infty(B_{r_2})}+\| \nabla u_\varepsilon\|_{L^\infty(B_{r_2})} \right)\leq c_1$, it follows that
           \begin{align}\label{34224}
                \left|{4\lambda_1}\int_{B_{rs}(x_0)}  \left(- \chi_{\{u_{\varepsilon}>0\}} +3\left(u_\varepsilon^+\right)^2 \right)u_{\varepsilon x_i} H\left(u_{\varepsilon  x_i}\right)\text{d}x\right|
               \leq &{4\lambda_1}\int_{B_{rs}(x_0)}  \left|\left(- \chi_{\{u_{\varepsilon}>0\}} +3\left(u_\varepsilon^+\right)^2 \right)u_{\varepsilon x_i} H\left(u_{\varepsilon  x_i}\right)\right|\text{d}x\notag\\
               \leq & {4\lambda_1}\int_{B_{rs}(x_0)}\left|u_{\varepsilon x_i} \right|\cdot\left| 1+3 u_{\varepsilon}   ^2\right|\cdot2\sigma ^\frac{1}{p-1}\left(\varepsilon +4\sigma  ^\frac{2}{p-1}\right)^\frac{p-2}{2} \text{d}x\notag\\
               \leq & {2^{p+1}\sigma \lambda_1}\int_{B_{rs}(x_0)}   |\nabla u_{\varepsilon}|\cdot\left|1+3 u_{\varepsilon}   ^2\right| \text{d}x\notag\\
               \leq &  {C(c_1,N,p,\lambda_1)\sigma r^Ns^N}.
           \end{align}

Thus, for {sufficiently small $\varepsilon $}, we deduce by \eqref{lebes2}, \eqref{31224}, \eqref{32224}, {\eqref{33224}, and} \eqref{34224} that
           \begin{align*}
              \int_{0}^{1}  \int_{B_{rs}(x_0)\cap O_{\sigma   } }   \left(\left(\varepsilon +|\nabla u_\varepsilon|^2 \right)^\frac{p-2}{2}\left|D^2 u_{\varepsilon }\right| \right)^2\text{d}x\text{d}s
                \leq {C(c_1,C_4,r_2,N,p,\lambda_1)}\sigma r^{N-1} .
              \end{align*}

         {Then, proceeding in the same way as in Case I, we obtain}
         \begin{align*}
         \int_{0}^{1}  \mathcal{L}^N(B_{rs}(x_0)\cap O_{\sigma   }\cap \{u> 0 \}) \text{d}s \leq \frac{16}{9}C\sigma  {(p+1)^2}\lambda_2^{-2}r^{N-1}=:{C_6  \sigma  r^{N-1}}.
         \end{align*}
            \end{proof}

            \begin{theorem}\label{t41} Assume that     $\lambda_2>0$.
             Let $r_2$ be a positive constant {determined by} Lemma~\ref{LL22}.
            {Then, there} exists at least one minimizer of $\mathcal{J} (u)$ over the set $\mathcal{K} $ such that, for any $x_0\in \partial  \{u>0\}\cap {B_{r_2}}$ and {$r\in{(0,\frac{r_2}{2})}$} there holds
         \begin{align*}
             \mathcal{H} ^{N-1}( B_r(x_0)\cap\Gamma ^+)\leq {Cr^{N-1}},
         \end{align*}
   where $C$ is a {positive} constant depending only on $r_2$, $N$, $p$, $\lambda_1$, $\lambda_2$, $\| g\| _{{L^\infty}(\Omega )}$, {$\| g\| _{W^{1.p}(\Omega )}$,
             and} {the} diameter of $\Omega $.
            \end{theorem}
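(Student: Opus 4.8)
The plan is to work with the particular non-negative minimizer $u$ produced by Lemma~\ref{LL22} (the limit of the penalized minimizers), which already carries the weighted measure bound for the small-gradient set $O_\sigma$, and to run a packing/covering argument fed by the optimal growth (Theorem~\ref{the31}) and the non-degeneracy (Proposition~\ref{the32}). Fix $x_0\in\partial\{u>0\}\cap B_{r_2}$; since $B_{r_2}\Subset\Omega$ this means $x_0\in\Gamma^+$. Fix $r\in(0,r_2/2)$, and (shrinking $r_2$ once and for all) assume every ball of radius comparable to $r_2$ around a point of $\Gamma^+\cap B_{r_2}$ lies inside $\Omega$, inside the fixed ball of Lemma~\ref{LL22}, and inside the ranges of validity of Theorems~\ref{the31}--\ref{the32}. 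For small $\rho>0$ let $\{x_i\}_{i=1}^{m}\subset\Gamma^+\cap B_r(x_0)$ be a maximal $\rho$-separated family, so the balls $B_{\rho/2}(x_i)$ are pairwise disjoint while $\Gamma^+\cap B_r(x_0)\subset\bigcup_i B_\rho(x_i)$. Then $\Gamma^+\cap B_r(x_0)$ is covered by $m$ sets of diameter $2\rho$, so the whole proof reduces to the estimate $m\le C(r/\rho)^{N-1}$ with $C$ independent of $\rho$; letting $\rho\downarrow 0$ then yields $\mathcal{H}^{N-1}(\Gamma^+\cap B_r(x_0))\le C(N)\,C\,r^{N-1}$.

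The core step is a uniform volume-density estimate for $O_\sigma\cap\{u>0\}$ in each ball $B_{\rho/2}(x_i)$, with the auxiliary parameter $\sigma$ coupled to the covering scale. Set $\sigma:=\sigma_*\rho$ with $\sigma_*:=\tfrac12 C_1^{p-1}$, so $\sigma\in(0,1)$ for $\rho$ small and $\sigma^{1/(p-1)}=C_1(\rho/2)^{1/(p-1)}$. Since $x_i\in\Gamma^+$, estimate \eqref{u2} gives $|\nabla u(x)|\le C_1|x-x_i|^{1/(p-1)}\le\sigma^{1/(p-1)}$ for all $x\in B_{\rho/2}(x_i)$, hence $B_{\rho/2}(x_i)\subset O_\sigma$ and therefore $O_\sigma\cap\{u>0\}\cap B_{\rho/2}(x_i)=\{u>0\}\cap B_{\rho/2}(x_i)$. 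On the other hand, Proposition~\ref{the32} at $y=x_i$ with radius $\rho/4$ gives a point $z_i\in\partial B_{\rho/4}(x_i)\cap\{u>0\}$ with $u(z_i)\ge\tfrac12 C_2(\rho/4)^{p/(p-1)}$, while \eqref{u1} applied at the free-boundary point nearest to $z_i$ gives $u(z_i)\le C_0\,\mathrm{dist}(z_i,\Gamma^+)^{p/(p-1)}$; hence $\mathrm{dist}(z_i,\Gamma^+)\ge c_0\rho$ for a structural constant $c_0>0$, which we may take $<1/8$. The ball $B_{c_0\rho}(z_i)$ is then connected, contained in $\Omega$ and in $B_{\rho/2}(x_i)$, disjoint from $\Gamma^+=\partial\{u>0\}\cap\Omega$, and it meets $\{u>0\}$ at $z_i$, so $B_{c_0\rho}(z_i)\subset\{u>0\}\cap B_{\rho/2}(x_i)$. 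Consequently
\begin{align*}
\mathcal{L}^N\big(O_\sigma\cap\{u>0\}\cap B_{\rho/2}(x_i)\big)=\mathcal{L}^N\big(\{u>0\}\cap B_{\rho/2}(x_i)\big)\ge |B_1|\,c_0^{N}\rho^{N}.
\end{align*}

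To conclude, I would sum this estimate over $i$ using disjointness and $\bigcup_i B_{\rho/2}(x_i)\subset B_{r+\rho/2}(x_0)$, getting $m\,|B_1|\,c_0^{N}\rho^{N}\le\mathcal{L}^N\big(O_\sigma\cap\{u>0\}\cap B_{r+\rho/2}(x_0)\big)$. By Tonelli's theorem, for any measurable $A$ and $R>0$,
\begin{align*}
\int_0^1\mathcal{L}^N\big(A\cap B_{Rs}(x_0)\big)\,\text{d}s=\int_{A\cap B_R(x_0)}\Big(1-\tfrac{|x-x_0|}{R}\Big)\,\text{d}x\ \ge\ \tfrac12\,\mathcal{L}^N\big(A\cap B_{R/2}(x_0)\big),
\end{align*}
so taking $R:=2r+\rho$ (admissible in Lemma~\ref{LL22} for $\rho$ small, since then $R<r_2$ and $B_R(x_0)$ lies in the fixed ball) and applying Lemma~\ref{LL22} with center $x_0$, radius $R$ and the above $\sigma$ gives $\mathcal{L}^N\big(O_\sigma\cap\{u>0\}\cap B_{r+\rho/2}(x_0)\big)\le 2C\sigma R^{N-1}\le C'\sigma_*\rho\,r^{N-1}$. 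Hence $m\le C''(r/\rho)^{N-1}$, and letting $\rho\downarrow 0$ finishes the proof; tracking the constants ($C_0$, $C_1$ from Theorem~\ref{the31}, $C_2$ from Proposition~\ref{the32}, and the constant of Lemma~\ref{LL22}) shows the final $C$ depends only on $r_2$, $N$, $p$, $\lambda_1$, $\lambda_2$, $\|g\|_{L^\infty(\Omega)}$, $\|g\|_{W^{1,p}(\Omega)}$ and $\mathrm{diam}\,\Omega$.

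The genuinely difficult analysis is already behind us, in the second-order estimates of Section~\ref{ssec1} and, above all, in the weighted measure bound of Lemma~\ref{LL22}; here the only real point of care is conceptual, namely realizing that $\sigma$ must be taken proportional to the covering radius $\rho$ (legitimate precisely because $\sigma$ is a free parameter in Lemma~\ref{LL22}) and then verifying the two opposing facts about a ball $B_{\rho/2}(x_i)$ centered on $\Gamma^+$: the gradient is uniformly small there by \eqref{u2}, while $\{u>0\}$ fills a fixed fraction of its volume there by non-degeneracy together with \eqref{u1}. The remainder — disjointness, the Tonelli identity, and the cancellation of the powers of $\rho$ — is routine bookkeeping, apart from the usual precaution of shrinking $r_2$ so that all the auxiliary balls remain within the domains of validity of the cited results.
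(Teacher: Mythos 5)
Your argument is correct, and at its core it is the same strategy as the paper's: control the number of small balls needed to cover $\Gamma^+\cap B_r(x_0)$ by the Lebesgue measure of the small-gradient positivity set $O_\sigma\cap\{u>0\}$, which Lemma~\ref{LL22} bounds, after showing that near each free boundary point this set contains a full ball of comparable radius. The differences are in the implementation, and they are worth noting. The paper first upgrades the averaged bound of Lemma~\ref{LL22} to the pointwise density estimate \eqref{hausdorff1} by a contradiction/doubling argument, then invokes the porosity statement (Proposition~\ref{p33}, whose proof is omitted) to place a ball $B_{c\sigma}(y_0)\subset B_\sigma(x^i)\cap O_\sigma\cap\{u>0\}$ at each covering center, and finally sums via a Besicovitch cover. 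You instead (i) keep the averaged bound and extract what you need from it through the explicit Tonelli identity with $R=2r+\rho$, which is a cleaner, quantitative version of the paper's contradiction step; (ii) re-derive the porosity-type interior ball directly from the gradient growth \eqref{u2} (giving $B_{\rho/2}(x_i)\subset O_\sigma$ once $\sigma$ is coupled to $\rho$ via $\sigma=\tfrac12 C_1^{p-1}\rho$) together with non-degeneracy (Proposition~\ref{the32}) and \eqref{u1} (giving $B_{c_0\rho}(z_i)\subset\{u>0\}$), so Proposition~\ref{p33} is not needed; and (iii) use disjoint half-balls from a maximal $\rho$-separated net instead of a Besicovitch cover, which removes the overlap constant $n(N)$. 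Your route is therefore more self-contained and makes the constant-tracking transparent; the paper's route isolates the reusable pointwise estimate \eqref{hausdorff1} and leans on the standard porosity machinery. One shared loose end: both proofs apply Lemma~\ref{LL22} on a ball around $x_0$ of radius roughly $2r$, which need not be contained in the fixed ball $B_{r_2}$ when $x_0$ is close to $\partial B_{r_2}$; your upfront "shrink $r_2$ once and for all" is an acceptable fix and is no worse than the paper's own treatment of this point.
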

            \begin{proof}
             Under the conditions of Lemma~\ref{LL22}, {we claim that}
             \begin{align}\label{hausdorff1}
                 \mathcal{L}^N\left(O_\sigma  \cap B_{r}(x_0)\cap \{u>0\}\right)\leq {C_7 \sigma  r^{N-1}},\ \forall {r<\frac{r_2}{2}},
             \end{align}
             where $C_7$ is a {positive} constant depending only on $r_2$, $N$, $p$, $\lambda_1$, $\lambda_2$, $\| g\| _{{L^\infty}(\Omega )}$, {$\| g\| _{W^{1.p}(\Omega )}$,
             and} {the} diameter of $\Omega $.

           We prove by contradiction. Let us suppose {\eqref{hausdorff1} fails}. Then, there exists a ball $B_r(x_0)$ with center on the free boundary such that for any $k\in \mathbb{R}$, it holds that
            \begin{equation*}
             \mathcal{L}^N\left(O_\sigma  \cap B_{r}(x_0)\cap \{u>0\}\right)\geq {k \sigma  r^{N-1}}.
            \end{equation*}
             However, by Lemma~\ref{LL22}, we have
             \begin{align*}
                  C{\sigma r^{N-1}}
                 \geq &\int_{0}^{1}  \mathcal{L}^N\left(O_\sigma  \cap B_{2rs}(x_0)\cap \{u>0\}\right)\text{d}s\\
                  =&\int_{0}^{\frac{1}{2}}    \mathcal{L}^N\left(O_\sigma  \cap B_{2rs}(x_0)\cap \{u>0\}\right)\text{d}s+\int_{\frac{1}{2}}^{1}    \mathcal{L}^N\left(O_\sigma  \cap B_{2rs}(x_0)\cap \{u>0\}\right)\text{d}s\\
                  \geq &\frac{1}{2}\mathcal{L}^N\left(O_\sigma  \cap B_{r}(x_0)\cap \{u>0\}\right)\\
                  \geq & {\frac{1}{2}k\sigma r^{N-1}},
             \end{align*}
              which is a contradiction for $k\rightarrow \infty$.

              {Now, according to Proposition~\ref{p33}}, {for} $x_0\in B_{1-\sigma  }\cap\Gamma ^+$, there exist $y_0\in \{u>0\}$ and $c(N,p)>0$ such that
                 $B_{c\sigma  }(y_0)\subset\left( B_\sigma  (x_0)\cap O_\sigma  \cap  \{u>0\}\right)$.
{By virtue of the Besicovitch covering theorem, letting $\left\{B_\sigma  (x^i)\right\}_{i\in I}$ be finite coverings of $B_r(x_0)\cap\Gamma ^+$ with $x^i\in \Gamma ^+$ and}
                 at most $n(N)$ overlapping at each point, we have
                 \begin{align*}
                     \sum_{i\in I}  C(N)(C\sigma  )^N
                     \leq& \sum_{i\in I}\mathcal{L}^N\left(O_\sigma  \cap B_{\sigma  }\left(x^i\right)\cap \{u>0\}\right)
                      \leq n\mathcal{L}^N\left(O_\sigma  \cap B_{r }(x_0)\cap \{u>0\}\right)
                      \leq {C_7 n \sigma r^{N-1}}.
                 \end{align*}
             Therefore, {there holds}
                 $\mathcal{H} ^{N-1}(B_r(x_0)\cap\Gamma ^+)\leq \displaystyle\liminf _{\sigma  \rightarrow 0}C(N)\sigma^{N-1}\leq {C r^{N-1}}$.
             \end{proof}

\end{document}